\documentclass[12pt,reqno] {amsart}
\usepackage{graphicx}
\usepackage{tikz}
\usepackage{latexsym, enumitem, fancyhdr, bm}
\usepackage[utf8]{inputenc}
\usepackage{amsfonts}
\usepackage{mathrsfs}
\usepackage{xcolor}
\usepackage{todonotes}
\usepackage{ulem}
\usepackage{hyperref}
\usepackage{mathtools}
\usepackage{amsmath}
\usepackage{amssymb}
\usepackage{amsthm}
\newtheorem{proposition}{Proposition}
\newtheorem{lemma}{Lemma}
\newtheorem{theorem}{Theorem}

\newtheorem{corollary}{Corollary}
\theoremstyle{remark}
\newtheorem{remark}[theorem]{Remark}
\theoremstyle{remark}
\theoremstyle{definition}
\newtheorem{definition}[theorem]{Definition}

\title[Cauchy dual subnormality and toral $2$-isometry]{Cauchy dual subnormality for toral $2$-isometric operator-valued $2$-variable weighted shift}
\author[S. Dey]{Soumyadip Dey}
\address[S. Dey]{School of Mathematics and Computer Science\\Indian Institute of Technology Goa, India}
 \email{soumyadip22232101@iitgoa.ac.in}
\begin{document}
\begin{abstract}
In this paper, we show that if $\mathbf{T} = (T_1, T_2)$ is an analytic left-inverse commuting pair of toral $2$-isometries satisfying the joint kernel condition, then it is unitarily equivalent to an operator-valued weighted shift with invertible weights $\{W_{I}^{(j)}:j=1,2\}_{I\in \mathbb{Z}_{+}^2},$ where the initial weights $W_{0,0}^{(1)}$ and $W_{0,0}^{(2)}$ are positive operators. Moreover, if these initial weights commute, then the Cauchy dual $\mathbf{T}' := (T_1', T_2')$ is jointly subnormal. We also construct an example in which the initial weights do not commute, and the corresponding Cauchy dual fails to be jointly subnormal.
\end{abstract}
\maketitle
\section{Introduction}
  Let $\mathbb{Z}$, $\mathbb{R},$ and $\mathbb{C}$ stand for the sets of integers, real numbers, and complex numbers respectively. Denote by $\mathbb{N}$, $\mathbb{Z}_{+},$ and $\mathbb{R}_{+}$ the sets of positive integers, nonnegative integers, and nonnegative real numbers respectively. Given a topological space $X$, let $\mathfrak{B}(X)$ denote the $\sigma$-algebra of all Borel subsets of $X.$ For $a\in \mathbb{R}$,  $\delta_{a}$ stands for the Borel probability measure on $\mathbb{R}$ supported on $\{a\}$. 
For a positive integer $d$,  let $\mathbb{Z}_{+}^d$ stand for the $d$-fold Cartesian product of $\mathbb{Z}_{+}$ with itself. 
Let $\alpha = (\alpha_1, \ldots, \alpha_d)$ and $\beta = (\beta_1, \ldots, \beta_d)$ be in $\mathbb{Z}_{+}^d$. 
Let $|\alpha|$ denote the sum $\alpha_1 + \cdots + \alpha_d$. 
We write $\alpha \leq \beta$ if $\alpha_j \leq \beta_j$ for every $j = 1, \ldots, d$. 
For $\alpha \leq \beta$, we define $\binom{\beta}{\alpha} := \prod_{j=1}^d \binom{\beta_j}{\alpha_j}$.
 The notation $B(\mathcal{H})$ will stand for the $C^*$-algebra of all bounded linear operators on a Hilbert space $\mathcal{H}$. The product semigroup $\mathbb{Z}_{+}^d\times \mathbb{Z}_{+}^d$ with involution $(a,b)^*=(b,a)$ for $a,b\in \mathbb{Z}_{+}^d,$ is a $*$-semigroup. Denote it by $\mathfrak{R}_{d}.$

 An operator $T\in B(\mathcal{H})$ is said to be {\it subnormal} if there exists a Hilbert space $\mathcal{K}$ containing $\mathcal{H}$ and a normal operator $N\in B(\mathcal{K})$ such that $Nh=Th$ for every $h\in\mathcal{H}$. The study of subnormal operators is closely connected with moment problems and has been extensively developed over the past several decades. In particular, the pioneering work of Curto and Putinar established deep connections between subnormality, polynomial hyponormality, and moment theory. Their results on the existence of non-subnormal polynomially hyponormal operators \cite{R.E} and on nearly subnormal operators and moment problems \cite{R.P} have had a significant impact on the development of modern subnormal operator theory. Given a positive integer $m$, we say that an operator $T \in B(\mathcal{H})$ is an \textit{$m$-isometry (or $m$-isometric)} if $B_m(T)=0$, where
\[
B_m(T):=\sum_{k=0}^{m} (-1)^k \binom{m}{k} \, T^{*k} T^k.
\]
 It is well-known that a $2$-isometry is $m$-isometric for every integer $m\geq 2$.The notion of an $m$-isometric operator has been invented by Agler see \cite[p.11]{A.J}.
An operator $T \in B(\mathcal{H})$ is said to be left-invertible if $T^*T$ is invertible. In this case, the canonical left inverse of $T$ is given by $L_T = (T^*T)^{-1} T^*$. The \textit{Cauchy dual} operator $T'$ of a left-invertible operator $T\in B(\mathcal{H})$ is define by $T'=T(T^*T)^{-1}$. 
 It is well known that the range of a left-invertible operator is closed. Moreover, if $T$ is left-invertible, then so is its Cauchy dual $T'$, and the following identities hold: $(T')' = T, \ (T')^* T' = (T^* T)^{-1}$. The notion of the Cauchy dual operator was introduced and studied by Shimorin in the context of the wandering subspace problem for Bergman-type operators \cite{S.S}. 
 This technique has been further employed in \cite{S.C} to establish Berger--Shaw type theorems for $2$-hyperexpansive (or, in Shimorin's terminology, concave) operators. 
 It is also important to note that if $T\in B(\mathcal{H})$ is a $2$-isometry, then $T$ is left-invertible and $T'$ is a contraction. Indeed by \cite[Lemma 1]{S.R}, we have $\|T f\| \geq \|f\|$ for all $f \in \mathcal{H}$, which implies that  $T$ is left-invertible. Moreover, writing the polar decomposition $T = U |T|$, one can obtain
\begin{eqnarray}\label{Cauchy dual contraction}
\|T'\| = \|U |T|^{-1}\| \leq 1.
\end{eqnarray}
Recently, Anand, Chavan, Jablonski, and Stochel posed the question of whether the Cauchy dual of a $2$-isometry is a subnormal contraction. In 2018, they proved that, upon imposing an additional kernel condition $T^*T( \ker T^*)\subseteq \ker T^*,$ the Cauchy dual is subnormal see \cite[Theorem 3.3]{A.A}.
A commuting $d$-tuple of operators $\mathbf{T} = (T_1, \ldots, T_d)$ on $\mathcal{H}$ is said to be \textit{jointly subnormal} if there exist a Hilbert space $\mathcal{K}$ containing $\mathcal{H}$ and a commuting $d$-tuple $\mathbf{N} = (N_1, \ldots, N_d)$ of normal operators on $\mathcal{K}$ such that $\mathcal{H}$ is invarient under each of $N_j$ and $T_j = N_j|_{\mathcal{H}},$ for each $j = 1, \ldots, d$. 
The \textit{toral Cauchy dual} of an $d$-tuple $\mathbf{T}=(T_1, \ldots, T_d)$ of left-invertible operators is defined as $\mathbf{T}' := (T_1', \ldots, T_d')$, where $T_j' := T_j (T_j^* T_j)^{-1}, \ j = 1, \ldots, d$.
It is also important to note that if $\mathbf{T} = (T_1, \ldots, T_d) \in B(\mathcal{H})^d$ is a separate $2$-isometry, then each $T_j$ is left-invertible and each $T_j'$ is a contraction by \cite[Lemma 1]{S.R}. 
Given a positive integer $k$, a commuting $d$-tuple $\mathbf{T} = (T_1, \ldots, T_d) \in B(\mathcal{H})^d$ is called a \textit{toral $k$-isometry} if $B_{\beta}(\mathbf{T}) = 0$ for all $\beta\in\mathbb Z_+^d$ with $|\beta|=k$,
where
\[
B_{\beta}(\mathbf{T}) :=
\sum_{\alpha \in \mathbb{Z}_+^d,\; 0 \leq \alpha \leq \beta}
(-1)^{|\alpha|}
\binom{\beta}{\alpha}
\mathbf{T}^{*\alpha} \mathbf{T}^\alpha.
\]
Here, $\mathbf{T}^{\alpha} := T_1^{\alpha_1}\cdots T_d^{\alpha_d}$.

As results in this article are best formulated for toral $2$-isometries, we record the definition of toral $2$-isometry separately. See also \cite{V.A}.

\begin{definition}\label{Toral $2$ isometry}
Let $\mathbf{T} = (T_1, \ldots, T_d)$ be a commuting $d$-tuple of operators on $B(\mathcal{H})$.  
We say that $\mathbf{T}$ is said to be a toral $2$-isometry if $I - T_i^* T_i - T_j^* T_j + T_j^* T_i^* T_i T_j = 0$ for each $i, j = 1, \ldots, d.$  

Also  important to note that, for any integers $m, n \geq 0,$ one gets the identity 
\[{T_1^{*}}^m{T_2^{*}}^nT_2^nT_1^m=mT_1^*T_1 +nT_2^*T_2 -(m+n-1)I\]
see \cite[Lemma 6.1]{S.B}.
\end{definition}
It was recently proved that if $\mathbf{T} = (T_1, T_2)$ is a commuting pair of toral $2$-isometric scalar-valued weighted shifts with nonzero weights, then the toral Cauchy dual of $\mathbf{T}$ is jointly subnormal (see \cite[Theorem 4.9]{A.K}).
\begin{definition}\cite[Definition 1.1]{MJ}
  A tuple $(T_1,\ldots ,T_d)$ of  left invertible  commuting  operators is said to be a left-inverse commuting tuple if $L_iT_j=T_jL_i$ for $1\leq i\neq j\leq d,$ where $L_j:=(T_j^*T_j)^{-1}T_j^*$.
\end{definition}
\begin{definition}
 A commuting tuple $\mathbf{T}=(T_1,\ldots ,T_d)$ on $\mathcal{H}$ is analytic if $ \bigcap_{\alpha\in \mathbb{Z}_{+}^d}\mathbf{T}^{\alpha}(\mathcal{H})=\{0\}.$
\end{definition}
The notion of a left-inverse commuting tuple plays a crucial role in establishing a model class for analytic toral $2$-isometric $2$-tuples of operators on $\mathcal{E}$-valued Dirichlet-type spaces over $\mathbb{D}^2.$ Moreover, left-inverse commuting property is essential in developing a multivariable Wold-type of decomposition for $2$-isometries. For more details, the reader is referred to \cite[Theorem 3.3]{Mul} and \cite{S.B}.
 Motivated by the results of \cite[Theorem 3.3]{A.A} and \cite{A.K},  we investigate an analogous problem for left-inverse commuting tuples of toral $2$-isometries. We now proceed to formulate the following version of this  problem.
 
\textbf{Problem.} Let $\mathbf{T} = (T_1, T_2)$ be a  left-inverse commuting analytic toral $2$-isometry. Further if the toral Cauchy dual $\boldsymbol {T'}=(T_1',T_2')$ is commuting, then does it follow that  $\mathbf{T'}$ a jointly subnormal contraction?

In this paper, we solve this problem by imposing an additional joint kernel condition. 
\begin{definition}    
We say that an operator tuple $\mathbf{T}=(T_1,T_2)$ on $B(\mathcal{H})$ satisfies \textit{the joint kernel condition} if
\[
T_i^* T_i (\ker T_1^* \cap \ker T_2^*) \subseteq (\ker T_1^* \cap \ker T_2^*), \quad \text{for } i=1,2 . 
\]
By the square root lemma (see \cite[Theorem 2.4.4]{B.S}), $(1)$ holds if and only if
\[
|T_i|(\ker T_1^* \cap \ker T_2^*) \subseteq \ker T_1^* \cap \ker T_2^*, \quad \text{for } i=1,2,
\]
where $|T_i|$ denotes the positive part of $T_i$ for $i=1,2$.
\end{definition}
\begin{proposition}\label{Cauchy dual}
  Let $\mathbf{T}=(T_1,T_2)$  be pair of left-invertible commuting operators on $B(\mathcal{H})$.Then the following are equivalent
  \begin{enumerate}
      \item[(i)] $\mathbf{T}$ satisfies the joint kernel condition,
      \item[(ii)] $T_i^*T_i(\ker T_1^*\cap \ker T_2^*)=(\ker T_1^*\cap \ker T_2^*)$ $\text{ for i=1,2}$,
     \item[(iii)] The toral Cauchy dual $\mathbf{T'}=(T_1',T_2')$ satisfies the joint kernel condition. 
  \end{enumerate}
\begin{proof}
 The equivalence  $(i)\Longleftrightarrow (ii)$  follows from the fact that if $A\in B(\mathcal{H})$ is selfadjoint which is invertible in $B(\mathcal{H})$ and $\mathcal{L}$ is closed vector subspace of $\mathcal{H}$ which is invariant for $A$ then $A(\mathcal{L})=\mathcal{L}$ .
 
 Using the simple observations $(T_i')'=T_i$ and $T_i'^*T_i'=(T_i^*T_i)^{-1}$ and $\ker T_i'^*=\ker T_i^*$ for $i=1, 2,$ we have $(ii)\Longleftrightarrow (iii).$
\end{proof}  
\end{proposition}
We now state a theorem in the multivariable setting for verifying the subnormality of operators. The theorem is closely related to the integral formulation of the Hausdorff moment criterion.
\begin{theorem}\cite[Theorem 3.2]{LU}\label{Subnormality via by Stieltjes moment}
The commuting tuple $\mathbf{T}=(T_1,\ldots,T_d)$ has a commuting normal extension if and only if there exists a positive operator-valued measure $\rho$ defined on some $d$-dimensional rectangle $\mathbf{R}$ such that ${\mathbf{T}^{*}}^{J}\mathbf{T}^J$ =$\int_{\mathbf{R}}\mathbf{t}^{2J}d\rho(\mathbf{t})$ for all $J\in \mathbb{Z}_{+}^d$.
\end{theorem}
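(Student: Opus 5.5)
The forward implication is a direct push-forward of the spectral measure; the converse reduces to a positive-definiteness criterion, and the mixed moments $\langle \mathbf{T}^{\alpha}h,\mathbf{T}^{\beta}g\rangle$ are the main obstacle.

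\emph{Necessity.} Suppose $\mathbf{N}=(N_1,\ldots,N_d)$ on $\mathcal{K}\supseteq\mathcal{H}$ is a commuting normal extension, and let $E$ be its joint spectral measure, supported on a compact subset of $\prod_j\{|z_j|\le\|N_j\|\}$. For $h\in\mathcal{H}$,
\[
\langle {\mathbf{T}^{*}}^{J}\mathbf{T}^{J}h,h\rangle=\|\mathbf{N}^{J}h\|^{2}=\int |z|^{2J}\,d\langle E(z)h,h\rangle ,
\]
where $|z|^{2J}:=\prod_j|z_j|^{2J_j}$. Push $P_{\mathcal{H}}E(\cdot)|_{\mathcal{H}}$ forward under $z\mapsto(|z_1|,\ldots,|z_d|)$ onto the rectangle $\mathbf{R}=\prod_j[0,\|N_j\|]$. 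This gives a positive operator-valued measure $\rho$ on $\mathbf{R}$ with the required moment representation, and polarization upgrades the quadratic-form identity to the operator identity.

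\emph{Sufficiency: plan.} I would verify the Halmos--Bram--Itô positivity criterion on the $*$-semigroup $\mathfrak{R}_d$. A bounded commuting tuple is jointly subnormal if and only if
\[
\sum_{\alpha,\beta}\langle \mathbf{T}^{\alpha}h_{\beta},\mathbf{T}^{\beta}h_{\alpha}\rangle\ge 0
\]
for every finitely supported family $\{h_\alpha\}\subseteq\mathcal{H}$; boundedness is automatic here. The steps are as follows.

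1. Use Naimark's dilation to replace $\rho$ by a spectral measure $F$ on some $\mathcal{M}\supseteq\mathcal{H}$. This yields commuting positive operators $M_j=\int t_j\,dF(\mathbf{t})$ with $\|\mathbf{T}^{J}h\|=\|\mathbf{M}^{J}h\|$ for all $J$ and all $h\in\mathcal{H}$.
2. Try to define $V\mathbf{T}^{J}h:=\mathbf{M}^{J}h$ and show it is isometric on $\operatorname{span}\{\mathbf{T}^{J}h\}$. Since $\mathbf{M}$ is positive, it is normal, so the positivity sum would then transfer to the normal tuple $\mathbf{M}$, for which it holds trivially.
3. Conclude joint subnormality from Itô's theorem.

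\emph{Main obstacle.} The obstacle is step 2: the hypothesis controls only the diagonal moments ${\mathbf{T}^{*}}^{J}\mathbf{T}^{J}$, whereas the positivity criterion involves the mixed terms $\langle \mathbf{T}^{\alpha}h,\mathbf{T}^{\beta}g\rangle$. For $d=1$ this is exactly Lambert's theorem, where the gap is bridged by applying the moment identity to $h=\sum_k T^{k}h_k$ and using the Stieltjes positivity of $n\mapsto\|T^{n}h\|^{2}$.

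What I would try first in several variables is the analogue: apply the identity to vectors $\sum_\alpha \mathbf{T}^{\alpha}h_\alpha$ and use the commutativity of the $T_j$ to rewrite mixed terms as diagonal moments of shifted vectors. I expect this to go through only under extra structure, which the weighted shifts with commuting initial weights later in the paper provide. In complete generality I would not attempt to reprove the statement; I would rely on the citation \cite[Theorem 3.2]{LU} for its precise hypotheses.
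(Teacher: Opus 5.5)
\textbf{The converse direction is not proved.} Your necessity argument is correct: compress the joint spectral measure of the normal extension to $\mathcal{H}$, push it forward under $z\mapsto(|z_1|,\ldots,|z_d|)$, and polarize to get $\rho$. The sufficiency half, however, is left open. The paper only quotes the statement from Lubin, so there is no in-paper proof to compare with. But the converse holds for every bounded commuting tuple, with no weighted-shift structure needed.

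Your step 2 cannot be repaired as stated, because the map $\mathbf{T}^{J}h\mapsto\mathbf{M}^{J}h$ is in general not even well defined. For $d=1$ and $T=S$ the unilateral shift, $S^{*n}S^n=I$ forces $\rho=\delta_1 I$ and $M=I$. Then $e_1=Se_0$ would have to be sent both to $e_0$ and to $e_1$. In one variable the operator-measure statement is Embry's theorem, and its proof does not use such a map either.

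\textbf{The missing idea: transport each $T_k$ through the spectral measure, not through the moments.}
\begin{enumerate}
\item[(1)] Put $C_J:={\mathbf{T}^{*}}^{J}\mathbf{T}^{J}$. Commutativity gives $C_{J+\varepsilon_k}=T_k^{*}C_JT_k$. Hence the positive operator-valued measures $T_k^{*}\rho(\cdot)T_k$ and $t_k^{2}\,d\rho(\mathbf{t})$ have the same moments against every $\mathbf{t}^{2J}$.
\item[(2)] After pushing $\rho$ forward under $\mathbf{t}\mapsto(|t_1|,\ldots,|t_d|)$ you may assume $\mathbf{R}\subseteq\mathbb{R}_+^d$. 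There, polynomials in $t_1^2,\ldots,t_d^2$ are dense in $C(\mathbf{R})$. Therefore $T_k^{*}\rho(\sigma)T_k=\int_\sigma t_k^{2}\,d\rho$ for all Borel $\sigma$.
\item[(3)] Take a Naimark dilation $\rho=P_{\mathcal{H}}E(\cdot)|_{\mathcal{H}}$ on $\mathcal{K}_0=\bigvee_{\sigma}E(\sigma)\mathcal{H}$, and let $M_k=\int t_k\,dE$. Define $N_k\big(\sum_iE(\sigma_i)h_i\big)=\sum_iE(\sigma_i)T_kh_i$. The identity in (2) gives $\|\sum_iE(\sigma_i)T_kh_i\|=\|M_k\sum_iE(\sigma_i)h_i\|$.
\item[(4)] Consequently $N_k$ is well defined and bounded, extends $T_k$, and commutes with $E$ and with every $N_l$. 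It also satisfies $N_k^{*}N_k=M_k^{2}$. So the $N_k$ are commuting quasinormal operators, all commuting with $E$.
\item[(5)] Take the polar parts of the $N_k$ and redefine each as the identity on $E(\{t_k=0\})\mathcal{K}_0$. This gives commuting isometries $Z_k$ that commute with $E$.
\item[(6)] By It\^o's theorem, extend the $Z_k$ to their minimal commuting unitary extension $U_k$. Since $E$ commutes with both $Z_k$ and $Z_k^{*}$, it extends to a spectral measure $\tilde E$ commuting with the $U_k$.
\item[(7)] Then the operators $U_k\int t_k\,d\tilde E$, $k=1,\ldots,d$, form the required commuting normal extension of $\mathbf{T}$.
\end{enumerate}
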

The following is the main theorem of this article.
\begin{theorem}[Main Theorem]\label{Main theorem}
Let $\mathbf{T} = (T_1, T_2)$ be a pair of left-inverse commuting analytic toral $2$-isometries on $B(\mathcal{H})$ satisfying the joint kernel condition. Assume that 
$|T_1|\,|T_2| = |T_2|\,|T_1|$  \text{on } $\ker \mathbf{T}^*$, where $\ker \mathbf{T}^* := \ker T_1^* \cap \ker T_2^*$. Then the toral Cauchy dual $\mathbf{T}' = (T_1', T_2')$ is a jointly subnormal contraction such that $T_i'^* T_i' \big( \ker \mathbf{T}'^* \big) \subseteq \ker \mathbf{T}'^*$ for $i=1,2 .$
\end{theorem}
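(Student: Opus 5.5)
The approach is to pass to an operator-valued weighted shift model and then verify the moment criterion of Theorem \ref{Subnormality via by Stieltjes moment} for $\mathbf{T}'$.

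Set $\mathcal{E}:=\ker \mathbf{T}^*$. Since $\mathbf{T}$ is analytic and left-inverse commuting, a Wold-type argument (as in \cite{S.B}) should give
\[
\mathcal{H}=\bigoplus_{I\in\mathbb{Z}_+^2}\mathbf{T}^{I}\mathcal{E}.
\]
We will show that the graded pieces $\mathcal{H}_I:=\mathbf{T}^I\mathcal{E}$ are mutually orthogonal. The joint kernel condition together with the toral $2$-isometry identity ${T_1^{*}}^m{T_2^{*}}^nT_2^nT_1^m=mT_1^*T_1+nT_2^*T_2-(m+n-1)I$ should give
\[
\mathbf{T}^{*J}\mathbf{T}^{I}\mathcal{E}\subseteq \mathcal{E}\quad\text{when } I=J,
\qquad
\mathbf{T}^{*J}\mathbf{T}^{I}\mathcal{E}=0\quad\text{otherwise}.
\]
Here left-inverse commutativity lets us move $L_i$ past $T_j$ in order to reduce to the case $I=J$. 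Identifying each $\mathcal{H}_I$ with $\mathcal{E}$ via polar decomposition turns $\mathbf{T}$ into an operator-valued weighted shift with invertible weights $W^{(j)}_I$. The initial weights are $W^{(j)}_{0,0}=|T_j|\big|_{\mathcal{E}}$, which are positive because of the joint kernel condition.

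The key computation is the moment sequence on $\mathcal{E}$. For $e\in\mathcal{E}$, the identity above gives
\[
\|\mathbf{T}^{(m,n)}e\|^2=\langle (mA_1+nA_2-(m+n-1))e,e\rangle,
\qquad A_j:=T_j^*T_j|_{\mathcal{E}}=(W^{(j)}_{0,0})^2,
\]
so on $\mathcal{E}$ the moments of $\mathbf{T}$ are affine in $(m,n)$. For the Cauchy dual we have $T_j'^*T_j'=(T_j^*T_j)^{-1}$, and $\mathbf{T}'$ is again a weighted shift on the same grading: $\ker\mathbf{T}'^*=\mathcal{E}$, and $\mathbf{T}'^I\mathcal{E}=\mathbf{T}^I\mathcal{E}$ by left-inverse commutativity. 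Its weights are the inverses of the adjoint-weights of $\mathbf{T}$. Consequently
\[
\mathbf{T}'^{*(m,n)}\mathbf{T}'^{(m,n)}\big|_{\mathcal{H}_0}=\bigl(mA_1+nA_2-(m+n-1)\bigr)^{-1}.
\]
I expect to obtain this by noting that $|\mathbf{T}^{(m,n)}|^2$ restricted to $\mathcal{E}$ and the dual moment are mutual inverses, as in the one-variable Cauchy-dual shift computation of \cite{A.A}.

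Now use the hypothesis that $A_1,A_2$ commute. Let $E$ be their joint spectral measure on $[1,\infty)^2$; note $A_j\ge I$ since $T_j$ is a $2$-isometry. Then
\[
\bigl(1+m(a_1-1)+n(a_2-1)\bigr)^{-1}=\int_0^\infty e^{-s}\,e^{-s m(a_1-1)}\,e^{-s n(a_2-1)}\,ds,
\]
which exhibits the dual moments on $\mathcal{E}$ as
\[
\int_{[0,1]^2} t_1^{2m}t_2^{2n}\,d\rho_0(t),
\]
where $\rho_0$ is a positive operator measure obtained by pushing $e^{-s}ds\otimes E$ forward under $t_j=e^{-s(a_j-1)/2}$. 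The measure lives on the unit square, which also gives contractivity. To get the moments on all of $\mathcal{H}$, I would use the shift structure: $\mathbf{T}'^{*J}\mathbf{T}'^{J}$ acts on $\mathcal{H}_I$ as a compression of the moments at $\mathcal{E}$. Explicitly,
\[
\|\mathbf{T}'^J\mathbf{T}'^Ie\|^2\big/\ldots,
\]
and it is cleanest to apply Lambert's criterion through a direct-sum measure $\rho=\bigoplus_I\rho_I$. Each $\rho_I$ is defined through the embedding $\mathcal{H}_I\cong\mathcal{E}$ by $\rho_I(\Delta)=\bigl(\text{weight}\bigr)^{-*}\,t^{2I}\,\rho_0(\Delta)\,(\text{weight})^{-1}$, using the semigroup identity $\mathbf{T}'^{*(I+J)}\mathbf{T}'^{I+J}$. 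Positivity of each $\rho_I$ is clear from this formula. Theorem \ref{Subnormality via by Stieltjes moment} then yields joint subnormality. The kernel condition for $\mathbf{T}'$ follows from Proposition \ref{Cauchy dual}.

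The main obstacle is the step identifying the dual moments on $\mathcal{E}$, and their propagation to the other $\mathcal{H}_I$, with commuting functional calculi of $A_1,A_2$. This requires showing that the weights $W^{(j)}_I$ are functions of $A_1$ and $A_2$, namely
\[
\bigl(1+m(A_1-1)+n(A_2-1)\bigr)^{1/2}\text{-ratios}.
\]
This is where the commutation hypothesis is essential, and I would verify it inductively in $|I|$ using the toral $2$-isometry identity.
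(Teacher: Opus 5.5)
Modulo the three points below, your plan works. Up to the weighted-shift model it is the paper's argument (Theorems~\ref{Orthogonality} and~\ref{Model}); after that you take a different route.

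\textbf{Comparison with the paper.} The paper normalizes to $\ell^2_{IWP}(\mathcal{H})$ and computes the weights explicitly (Theorem~\ref{Representation of weights}). It then writes each weight as a spectral integral against the joint spectral measure of $(W^{(1)}_{0,0},W^{(2)}_{0,0})$ (Proposition~\ref{Integral representation}) and inverts. Finally it checks only that the base moments are Stieltjes bisequences (Lemmas~\ref{Measure} and~\ref{Moment}) and invokes the Stochel-type Theorem~\ref{Subnormality criteria}.

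You instead use the identity $T_j'|_{\mathcal{H}_K}=\bigl((T_j|_{\mathcal{H}_K})^*\bigr)^{-1}$ (Remark~\ref{Cauchy dual weights}). It iterates to
$\mathbf{T}'^{*I}\mathbf{T}'^{I}|_{\mathcal{E}}=(\mathbf{T}^{*I}\mathbf{T}^{I}|_{\mathcal{E}})^{-1}=\bigl(I+m(A_1-I)+n(A_2-I)\bigr)^{-1}$.
The first equality holds for any shift with invertible weights, and the second is the toral identity on $\mathcal{E}$, so no commutativity is used here. Your Laplace transform then gives an operator measure $\rho_0$ at once. It is the paper's measure: $\mu_{1,b,c}$ of Lemma~\ref{Measure} is the push-forward of $e^{-u}\,du$ under $u\mapsto(e^{-bu},e^{-cu})$. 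You then transport $\rho_0$ to every $\mathcal{H}_I$ and apply Lubin's Theorem~\ref{Subnormality via by Stieltjes moment}.

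Your version pins down exactly where the commutativity hypothesis enters and needs no explicit weights. The paper's version yields a concrete functional model of all the weights and avoids building a measure on every grade.

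\textbf{Three points need correcting.}

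First, the claim $\mathbf{T}^{*J}\mathbf{T}^{I}\mathcal{E}=\{0\}$ for $I\neq J$ is false (take $J=0$). What you need is $\mathbf{T}^{I}\mathcal{E}\perp\mathbf{T}^{J}\mathcal{E}$, and ``moving $L_i$ past $T_j$'' does not give it. This is the induction of Theorem~\ref{Orthogonality}, which uses that $\ker T_i^*$ reduces $T_j$, together with the $2$-isometry and toral identities.

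Second, your ``main obstacle'' is not one. Put $P_I:=\mathbf{T}'^{I}|_{\mathcal{E}}\colon\mathcal{E}\to\mathcal{H}_I$ and define $\rho_I(\Delta)=(P_I^*)^{-1}\bigl(\int_\Delta t^{2I}\,d\rho_0(t)\bigr)P_I^{-1}$; note it is $\int_\Delta t^{2I}\,d\rho_0$, not $t^{2I}\rho_0(\Delta)$. Then $\int t^{2J}\,d\rho_I=\mathbf{T}'^{*J}\mathbf{T}'^{J}|_{\mathcal{H}_I}$ and $\rho_I([0,1]^2)=I_{\mathcal{H}_I}$, so $\bigoplus_I\rho_I$ is a bounded positive operator measure. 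This uses only the invertibility of $P_I$ and the block-diagonality of $\mathbf{T}'^{*J}\mathbf{T}'^{J}$. Commutativity of $A_1,A_2$ is needed only to have a joint spectral measure in the Laplace step. The ratio formula for the weights holds without it (Theorem~\ref{Representation of weights}), and your route never uses that formula anyway.

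Third, the decomposition $\mathcal{H}=\bigoplus_I\mathbf{T}^I\mathcal{E}$ requires $H_{00}=H_{01}=H_{10}=\{0\}$ in Theorem~\ref{Wold decomposition}. The paper assumes this tacitly in Theorem~\ref{Model}, but analyticity of the pair in the paper's sense does not imply it. For example, $(I,S)$, with $S$ the unilateral shift on $\ell^2(\mathbb{Z}_+)$, satisfies every hypothesis and has $\ker\mathbf{T}^*=\{0\}$. You need each $T_j$ to be analytic, which kills those three summands, or you must assume $\mathcal{H}=\bigvee_I\mathbf{T}^I\mathcal{E}$ outright.
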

The remainder of the paper is organized as follows.
In  preliminary section, we discuss a Wold-type decomposition for any pair of left-inverse commuting toral $2$-isometries; see \cite[Theorem~3.4]{Mul}. 
In Subection \ref{Subsection of operator-valued multishift}, we study operator-valued multishifts. 
In Section \ref{Model Section}, we provide a characterization of left-inverse commuting analytic toral $2$-isometries satisfying the joint kernel condition (see Theorem~\ref{Orthogonality}), and we construct a model for such tuples of operators using operator-valued $2$-shifts (see Theorem~\ref{Representation of weights}). We also include a brief discussion of cyclic analytic toral $2$-isometries satisfying the joint kernel condition (see Proposition~\ref{Example}). 
In Section \ref{Section on Cauchy Dual Subnormality}, we provide the representing measure for Hausdorff moment bisequence  (see Lemma \ref{Measure}) and also find an integral representation for the toral $2$-isometric operator-valued weighted shift with in the class of $\ell^2_{IWP}(\mathcal{H})$, when initial two operator weights are commuting (see Proposition \ref{Integral representation}). 
Finally we prove that the toral Cauchy dual $\mathbf{T'} = (T_1', T_2')$ of a left-inverse commuting analytic toral $2$-isometry $\mathbf{T} = (T_1, T_2)$ satisfying the joint kernel condition, and such that $|T_1||T_2| = |T_2||T_1|$ on $\ker \mathbf{T}^*$, is a jointly subnormal contraction. We also show that if the analyticity assumption is removed and the joint kernel condition is replaced by separate kernel conditions, then $\mathbf{T'}$ is jointly subnormal (see Corollary~\ref{Seperate kernel condition}). At the end, we provide an example where $|T_1||T_2| \neq |T_2||T_1|$ on $\ker \mathbf{T}^*$ and the toral Cauchy dual of $\mathbf{T}$ fails to be jointly subnormal (see \ref{Example2}).
\section{Preliminaries}
In this section, we collect several  known results that will be used throughout the paper. In particular, we recall a functional model for cyclic analytic toral $2$-isometries satisfying the left-inverse commuting condition. The following theorem, proved in \cite{Mul}, provides a realization of such operator tuples as multiplication operators on suitable Dirichlet-type spaces.
\begin{theorem}\cite[Theorem 2.18]{Mul}
Let $\mathbf{T}=(T_1,T_2)$ be a commuting pair of operators on $\mathcal{H}$ which are left-inverse commuting. Assume that $\mathbf{T}$ is a cyclic analytic toral $2$-isometry. Then there exist positive finite Borel measures $\mu_1,\mu_2$ on $\mathbb{T}$ such that $\mathbf{T}$ is unitarily equivalent to $(M_{z_1},M_{z_2})$ on $\mathcal{D}(\mu_1,\mu_2)$.
\end{theorem}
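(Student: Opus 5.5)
The plan is to follow Richter's proof for cyclic analytic $2$-isometries in one variable, adapted to two variables. Throughout, write $\Delta_j := T_j^*T_j - I$ for $j=1,2$.

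\emph{Step 1: invariance and positivity of the defect operators.} Taking $i=j$ in Definition~\ref{Toral $2$ isometry} shows that each $T_j$ is a $2$-isometry, so $T_j^*\Delta_jT_j=\Delta_j$. By \cite[Lemma 1]{S.R} we also have $\Delta_j\ge 0$. Taking $i\neq j$ in Definition~\ref{Toral $2$ isometry} gives $T_j^*\Delta_iT_j=\Delta_i$. Hence each $\Delta_i$ is invariant under compression by both $T_1$ and $T_2$:
\[
\mathbf{T}^{*\alpha}\Delta_i\mathbf{T}^{\alpha}=\Delta_i \quad \text{for all } \alpha\in\mathbb{Z}_+^2 .
\]

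\emph{Step 2: reduction to a cyclic vector in $\ker\mathbf{T}^*$.} Analyticity and the left-inverse commuting property give the Wold-type decomposition of \cite[Theorem 3.4]{Mul}, namely $\mathcal{H}=\overline{\operatorname{span}}\{\mathbf{T}^{\alpha}(\ker\mathbf{T}^*)\}$. I would then show that $\ker\mathbf{T}^*$ is one-dimensional, spanned by a unit vector $e$ that is cyclic. To do this, project a given cyclic vector onto $\ker\mathbf{T}^*$ along the closed subspace $T_1\mathcal{H}+T_2\mathcal{H}$. This uses the left inverses $L_j$: the map $P=(I-T_1L_1)(I-T_2L_2)$ is the projection onto $\ker\mathbf{T}^*$, and it annihilates $\mathbf{T}^\alpha\mathcal{H}$ for every $\alpha\neq 0$ because $L_iT_j=T_jL_i$.

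\emph{Step 3: the measures $\mu_1,\mu_2$ and the norm identity.} For $k,l\in\mathbb{Z}$ define
\[
c_1(k)=\langle \Delta_1 T_1^{k}e,e\rangle \ (k\ge 0), \qquad c_1(-k)=\overline{c_1(k)},
\]
and similarly $c_2$. By Step 1 we have $\langle\Delta_1 T_1^{m+k}e,T_1^{m}e\rangle=c_1(k)$, so the matrix $\big(\langle\Delta_1T_1^{a}e,T_1^{b}e\rangle\big)_{a,b}$ is Toeplitz. It is positive semidefinite because $\Delta_1\ge 0$. By Herglotz's theorem there is a positive finite Borel measure $\mu_1$ on $\mathbb{T}$ with $\hat\mu_1(k)=c_1(k)$, and likewise $\mu_2$.

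The key claim, and the main obstacle, is the \emph{decoupling} identity
\[
\langle \Delta_1 T_1^aT_2^b e, T_1^{a'}T_2^{b'}e\rangle
=\delta_{b b'}\,\hat\mu_1(a-a'),
\]
and symmetrically for $\Delta_2$. That is, the defect form for $\Delta_1$ sees the second variable only as an orthogonal $H^2$-shift. Using Step 1 I can cancel common powers of $T_2$, which reduces the claim to showing $\langle\Delta_1T_1^aT_2^{s}e,T_1^{a'}e\rangle=0$ for $s\ge1$. My first attempt is to write
\[
\Delta_1T_2=T_1^*T_1T_2-T_2 ,
\]
and then use $T_1^*T_1T_2=T_2T_1^*T_1$, which I expect to follow from the left-inverse commuting condition $L_1T_2=T_2L_1$ together with $T_1T_2=T_2T_1$. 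This moves $T_2$ to the left, where $T_2^*$ hits $T_1^{a'}e$. Then $T_2^*T_1^{a'}e=0$, since $e\in\ker T_2^*$ and $T_2^*$ commutes appropriately with powers of $T_1$ (using $L_2T_1=T_1L_2$ and $\ker T_2^*=\ker L_2$). If the commutation $T_1^*T_1T_2=T_2T_1^*T_1$ fails in general, I would instead argue through $P$ and the range decomposition of Step 2. The same computation with $\Delta$ replaced by $I$ gives $\langle \mathbf{T}^\alpha e,\mathbf{T}^\beta e\rangle$ of the form $\delta_{\alpha\beta}$ plus defect terms. Combining this with the identity $\mathbf{T}^{*\alpha}\mathbf{T}^\alpha=\alpha_1T_1^*T_1+\alpha_2T_2^*T_2-(|\alpha|-1)I$ recorded after Definition~\ref{Toral $2$ isometry}, I expect to obtain, for every polynomial $p$,
\[
\|p(\mathbf{T})e\|^2=\|p\|^2_{H^2(\mathbb{D}^2)}+D_{\mu_1}(p)+D_{\mu_2}(p),
\]
where $D_{\mu_j}$ is the local Dirichlet integral in the variable $z_j$. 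This is exactly the norm of $\mathcal{D}(\mu_1,\mu_2)$.

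\emph{Step 4: the unitary.} By Step 3, the map $p\mapsto p(\mathbf{T})e$ is isometric from the polynomials, with the $\mathcal{D}(\mu_1,\mu_2)$ norm, into $\mathcal{H}$. Polynomials are dense in $\mathcal{D}(\mu_1,\mu_2)$, and $e$ is cyclic, so the map extends to a unitary $U:\mathcal{D}(\mu_1,\mu_2)\to\mathcal{H}$. On polynomials $UM_{z_j}=T_jU$, and by continuity this holds everywhere, which proves the theorem.
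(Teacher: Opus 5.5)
The paper does not prove this statement; it only quotes it from \cite{Mul}, so there is nothing to compare against. Your Richter-type architecture is the natural one: invariant defect operators, Herglotz measures, and comparison of Gram matrices on monomials. Step 1, the projection argument with $P$ in Step 2, the Herglotz part of Step 3, and Step 4 (using density of polynomials from \cite{S.B}) are fine.

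\textbf{The step that fails.} The step you flag as the main obstacle rests on a false identity: $T_1^*T_1T_2=T_2T_1^*T_1$ does not hold under the hypotheses. Take the model of Proposition~\ref{Example}, which is cyclic, analytic, left-inverse commuting and toral $2$-isometric. There $T_1^*T_1$ is diagonal with entries $\frac{1+(m+1)\alpha_1+n\alpha_2}{1+m\alpha_1+n\alpha_2}$. At $(0,0)$ and $(0,1)$ these are $1+\alpha_1$ and $1+\frac{\alpha_1}{1+\alpha_2}$, so $T_1^*T_1$ does not commute with $T_2$ once $\alpha_1\alpha_2\neq0$.

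The commutation is not needed. Since $L_2T_1=T_1L_2$ and $\ker L_2=\ker T_2^*$, you have $T_1(\ker T_2^*)\subseteq\ker T_2^*$. Hence for $s\ge1$ both terms of
\[
\langle\Delta_1T_1^aT_2^se,T_1^{a'}e\rangle=\langle T_2T_1^{a+1}T_2^{s-1}e,\,T_1^{a'+1}e\rangle-\langle T_2T_1^{a}T_2^{s-1}e,\,T_1^{a'}e\rangle
\]
vanish, because $T_1^{a'}e$ and $T_1^{a'+1}e$ lie in $\ker T_2^*=(\operatorname{ran}T_2)^{\perp}$. The same argument works for $\Delta_2$, using $T_2(\ker T_1^*)\subseteq\ker T_1^*$.

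\textbf{The assembly is only asserted.} The identity for $\mathbf T^{*\alpha}\mathbf T^\alpha$ controls diagonal entries only. Iterating Step 1 gives
\[
\langle T_1^cT_2^du,T_1^cT_2^dv\rangle=\langle u,v\rangle+c\langle\Delta_1u,v\rangle+d\langle\Delta_2u,v\rangle .
\]
Apply this with $c=\min(a,a')$, $d=\min(b,b')$, $u=T_1^{a-c}T_2^{b-d}e$ and $v=T_1^{a'-c}T_2^{b'-d}e$. The two kernel invariances above give $\langle u,v\rangle=\delta_{aa'}\delta_{bb'}$, and therefore
\[
\langle T_1^aT_2^be,T_1^{a'}T_2^{b'}e\rangle=\delta_{aa'}\delta_{bb'}+\min(a,a')\,\delta_{bb'}\,c_1(a-a')+\min(b,b')\,\delta_{aa'}\,c_2(b-b').
\]
You must then verify that the monomials of $\mathcal D(\mu_1,\mu_2)$ have this same Gram matrix. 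Integrating out the other variable turns the $\sup_r$-terms into one-variable local Dirichlet integrals. Richter's formula $D_\mu(z^j,z^k)=\min(j,k)\,\hat\mu(j-k)$ then finishes the comparison, with normalizing constants absorbed into $\mu_j$.

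\textbf{Analyticity in Step 2.} Your first sentence there needs each $T_j$ to be analytic, or a row-type analyticity, so that $H_{01}$ and $H_{10}$ vanish. The literal definition $\bigcap_\alpha\mathbf T^\alpha\mathcal H=\{0\}$ only kills $H_{00}$. For example, take $(M_\zeta\otimes I,\,I\otimes S)$ on $L^2(\mathbb T)\otimes H^2$ with cyclic vector $f\otimes1$, where $f\neq0$ a.e.\ and $\int\log|f|=-\infty$. This pair meets all the stated hypotheses, yet $\ker\mathbf T^*=\{0\}$, so it is not of the form $M_z$ on any $\mathcal D(\mu_1,\mu_2)$.
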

\begin{theorem}\cite[Theorem 3.3]{Mul}
Consider $\mathbf{T}=(T_1,T_2)$ to be a left-inverse commuting pair of analytic toral $2$-isometries on $\mathcal{H}$. Then there exist positive $\mathcal{L}(\ker \mathbf{T}^*)$-valued operator measures $\mu_1,\mu_2$ on $\mathbb{T}$ such that $T$ is unitarily equivalent with $(M_{z_1},M_{z_2})$ on $\mathcal{D}_{\ker \mathbf{T}^*}(\mu_1,\mu_2).$   
\end{theorem}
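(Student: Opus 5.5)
The plan follows the Richter--Olofsson route for a single analytic $2$-isometry, done one variable at a time. The left-inverse commuting hypothesis is what lets the two variables be decoupled.

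\emph{Step 1: the intertwining map.} Set $\mathcal{E}:=\ker \mathbf{T}^*$ and let $L_j=(T_j^*T_j)^{-1}T_j^*$. Since each $T_j$ is a $2$-isometry, it is left-invertible, so $L_j$ is defined. The hypothesis $L_1T_2=T_2L_1$, $L_2T_1=T_1L_2$, together with $L_jT_j=I$, gives $L_1L_2=L_2L_1$, so the multi-power $\mathbf{L}^\alpha$ is well defined. Let $P_{\mathcal{E}}$ denote the orthogonal projection onto $\mathcal{E}$. Using the identities $I-T_jL_j=P_{\ker T_j^*}$ and the commutation relations, I would establish the Wold-type expansion
\[
h=\sum_{\alpha\in\mathbb{Z}_+^2}\mathbf{T}^\alpha P_{\mathcal{E}}\mathbf{L}^\alpha h
\]
on a dense set. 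This uses $P_{\ker T_1^*}P_{\ker T_2^*}=P_{\mathcal{E}}$, which should follow from the left-inverse commutation. Analyticity, $\bigcap_\alpha\mathbf{T}^\alpha\mathcal{H}=\{0\}$, kills the remainder terms $\mathbf{T}^\alpha\mathbf{L}^\alpha h$ in the limit. I then define
\[
(Uh)(z)=\sum_{\alpha}(P_{\mathcal{E}}\mathbf{L}^\alpha h)\,z^\alpha .
\]
The relations $L_jT_j=I$ and $L_iT_j=T_jL_i$ give $UT_j=M_{z_j}U$, and analyticity gives that $U$ is injective.

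\emph{Step 2: the measures.} For $j=1,2$ put $\Delta_j:=T_j^*T_j-I$, which is positive because $\|T_jh\|\ge\|h\|$. The $2$-isometry identity $T_j^*\Delta_jT_j=\Delta_j$ shows that for $x,y\in\mathcal{E}$ the kernel
\[
(m,n)\mapsto\langle \Delta_j T_j^m x,\,T_j^n y\rangle
\]
depends only on $m-n$, after using left-inverse commuting to move $T_j^{*}$ past the powers. It is also positive definite. By the operator-valued Herglotz/Naimark theorem there is a positive $B(\mathcal{E})$-valued measure $\mu_j$ on $\mathbb{T}$ with $\hat\mu_j(m-n)=P_{\mathcal{E}}T_j^{*n}\Delta_jT_j^{m}|_{\mathcal{E}}$. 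This defines $\mu_1,\mu_2$.

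\emph{Step 3: the norm identity (main obstacle).} I must show that $\|h\|^2=\|Uh\|^2_{\mathcal{D}_{\mathcal{E}}(\mu_1,\mu_2)}$ for $h=\sum \mathbf{T}^\alpha x_\alpha$ with finitely many $x_\alpha\in\mathcal{E}$. I would expand $\|h\|^2$ using the toral identity
\[
{T_1^{*}}^m{T_2^{*}}^nT_2^nT_1^m=mT_1^*T_1+nT_2^*T_2-(m+n-1)I
\]
recorded after the Definition of toral $2$-isometry. Orthogonality of the pieces $\mathbf{T}^\alpha\mathcal{E}$ modulo the defect terms then splits the expression into three parts. The first is an $H^2_{\mathcal{E}}$ part. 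The remaining two have the form $\sum$ over the coefficients of $\langle\Delta_j\cdot,\cdot\rangle$, and each should match the local Dirichlet integral $\int_{\mathbb{T}}\big\|\frac{f(z)-f(\lambda_j)}{z_j-\lambda}\big\|^2_{d\mu_j}$ in the $j$-th variable.

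The hard point is that the cross terms between different $\alpha$ must reorganise exactly into the $\mu_j$-moments. Here the toral (mixed) $2$-isometry condition $I-T_1^*T_1-T_2^*T_2+T_2^*T_1^*T_1T_2=0$ is essential. It should make $\Delta_1$ invariant under conjugation by $T_2$, that is $T_2^*\Delta_1T_2=\Delta_1$ compressed to $\mathcal{E}$, so that the $z_2$-index contributes only diagonally. I would first verify this on monomials $\mathbf{T}^\alpha x$, $x\in\mathcal{E}$, and then extend by sesquilinearity.

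Finally, the range of $U$ contains all $\mathcal{E}$-valued polynomials, which are dense in $\mathcal{D}_{\mathcal{E}}(\mu_1,\mu_2)$. Hence $U$ extends to a unitary operator implementing the claimed equivalence.
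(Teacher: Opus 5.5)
The paper does not prove this statement; it is quoted from \cite{Mul}, so your proposal has to stand on its own. Steps 2 and 3 are essentially sound. From $T_2^{*q}T_1^{*p}T_1^pT_2^q=I+p\Delta_1+q\Delta_2$ one gets, for $x,y\in\mathcal E$, $p=\min(m,n)$ and $q=\min(k,l)$,
\[
\langle T_1^mT_2^kx,T_1^nT_2^ly\rangle=\delta_{mn}\delta_{kl}\langle x,y\rangle+\delta_{kl}\,p\,\langle \Delta_1T_1^{m-p}x,T_1^{n-p}y\rangle+\delta_{mn}\,q\,\langle \Delta_2T_2^{k-q}x,T_2^{l-q}y\rangle ,
\]
which is the Gram matrix of the monomials in $\mathcal D_{\mathcal E}(\mu_1,\mu_2)$ for your $\mu_j$. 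Note, though, that $T_2^*\Delta_1T_2=\Delta_1$ holds on all of $\mathcal H$ and only reduces you to the case where one $z_2$-exponent is $0$. The remaining off-diagonal terms vanish because $T_2^*\Delta_1T_1^ny=0$ for $y\in\mathcal E$, i.e.\ because $\ker T_2^*$ reduces $T_1$ \cite[Corollary 4.2]{MJ}. That is where left-inverse commutativity is really used, not in Step 2.

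The genuine gap is Step 1. Everything hinges on the wandering subspace property $\mathcal H=\bigvee_{\alpha}\mathbf T^\alpha\mathcal E$. On finite sums $\sum\mathbf T^\alpha x_\alpha$ your expansion is trivial, but their density is exactly what is at stake. Your reason, that analyticity ``kills the remainder terms $\mathbf T^\alpha\mathbf L^\alpha h$'', fails for two reasons.

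First, with $P_j=I-T_jL_j$ the two-variable telescoping reads
\[
I=\sum_{m<M,\,n<N}\mathbf T^{(m,n)}P_{\mathcal E}\mathbf L^{(m,n)}+\sum_{m<M}T_1^mP_1L_1^m\,T_2^NL_2^N+T_1^ML_1^M\sum_{n<N}T_2^nP_2L_2^n+T_1^ML_1^MT_2^NL_2^N ,
\]
and the two edge sums are not controlled by $\bigcap_\alpha\mathbf T^\alpha\mathcal H=\{0\}$. Concretely, take $T_1=I$ and $T_2$ the unilateral shift on $H^2$. This is a left-inverse commuting toral $2$-isometry with $\bigcap_{m,n}T_1^mT_2^nH^2=\{0\}$, yet $\ker\mathbf T^*=\{0\}$ and $U=0$. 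So joint analyticity in the sense of the paper's Definition does not give injectivity of $U$; under that definition the statement itself fails for this pair. One must use that each $T_j$ is analytic, which is what removes $H_{01}$ and $H_{10}$ in Theorem~\ref{Wold decomposition}.

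Second, even in one variable $\bigcap_nT^n\mathcal H=\{0\}$ is a statement about ranges. It yields $\ker U=\{0\}$, but it gives no norm control of the non-orthogonal idempotents $T^nL^n$. For general analytic left-invertible operators $\ker T^*$ need not generate $\mathcal H$. For $2$-isometries the generating property is a genuine theorem (Richter \cite{S.R}; Shimorin's Wold-type decomposition \cite{S.S}) and has to be invoked.

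A repair runs as follows.
\begin{enumerate}
\item[(a)] Apply that one-variable theorem to $T_1$ to get $\mathcal H=\bigvee_mT_1^m\ker T_1^*$.
\item[(b)] Since $\ker T_1^*$ reduces $T_2$, the restriction $T_2|_{\ker T_1^*}$ is a $2$-isometry whose adjoint has kernel $\ker T_1^*\cap\ker T_2^*=\mathcal E$. It is analytic because $T_2$ is. Hence $\ker T_1^*=\bigvee_nT_2^n\mathcal E$, and so $\mathcal H=\bigvee_{m,n}T_1^mT_2^n\mathcal E$.
\item[(c)] Define $U$ on finite sums by $\sum\mathbf T^\alpha x_\alpha\mapsto\sum x_\alpha z^\alpha$. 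Step 3 makes it isometric, and density of $\mathcal E$-valued polynomials in $\mathcal D_{\mathcal E}(\mu_1,\mu_2)$ gives unitarity; you should cite that density or build it into the definition of the space.
\end{enumerate}
No expansion of a general $h$ is needed.
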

\begin{theorem}\cite[Theorem 3.4]{Mul}\label{Wold decomposition}
\begin{samepage}
Let $(T_1, T_2)$ be a commuting pair of operators which are left-inverse commuting  on a Hilbert space $\mathcal{H}$. Suppose $T_1$ and $T_2$ satisfying Wold-type of decomposition. Then the pair $(T_1,T_2)$ has joint Wold-type of decomposition given by $\mathcal{H}=H_{00}\oplus H_{01}\oplus H_{10}\oplus H_{11}$ where for each $i , j=0,1$ the subspaces $H_{ij}$ are $T_1$ and $T_2$ reducing and $T_1$ acts unitarily on $H_{00}$, $H_{01}$  and $T_2$ acts unitarily on $H_{00}$, $H_{10},$ where  
\[
{H}_{00} =\bigcap_{m,n\geq 0} T_1^m T_2^n \mathcal{H}, \quad
{H}_{01} =\bigvee_{n\geq 0}T_2^n(\mathcal{E}_{01}),{H}_{10} =\bigvee_{n\geq 0}T_1^m(\mathcal{E}_{10}), H_{11} = \bigvee_{m,n \geq 0} T_1^m T_2^n (\ker \mathbf{T}^*),
\]
with 
\[
\mathcal{E}_{01} = \bigcap_{m\geq 0} T_1^m(kerT_2^*), \quad 
\mathcal{E}_{10} = \bigcap_{n\geq 0} T_2^n(kerT_1^*).
\]
\end{samepage}
\end{theorem}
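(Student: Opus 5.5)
The plan is to obtain the joint decomposition by intersecting the two one-variable Wold-type decompositions, after showing that they are compatible. By hypothesis each $T_j$ admits a Wold-type decomposition
\[
\mathcal{H}=\mathcal{H}^{(j)}_u\oplus \mathcal{H}^{(j)}_a ,\qquad \mathcal{H}^{(j)}_u=\bigcap_{k\geq 0}T_j^k\mathcal{H},\qquad \mathcal{H}^{(j)}_a=\bigvee_{k\geq 0}T_j^k(\ker T_j^*).
\]
Both summands reduce $T_j$, and $T_j$ is unitary on $\mathcal{H}^{(j)}_u$.

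\textbf{Step 1: the pieces for $T_1$ reduce $T_2$.} Since $L_1=(T_1^*T_1)^{-1}T_1^*$, we have $\ker L_1=\ker T_1^*$. Left-inverse commutativity gives $L_1T_2=T_2L_1$, so $T_2(\ker T_1^*)\subseteq\ker T_1^*$. Commutativity gives $T_2T_1^m\mathcal{H}\subseteq T_1^m\mathcal{H}$. Hence
\[
T_2\,\mathcal{H}^{(1)}_u\subseteq\mathcal{H}^{(1)}_u,\qquad T_2T_1^m(\ker T_1^*)=T_1^mT_2(\ker T_1^*)\subseteq T_1^m(\ker T_1^*),
\]
so $T_2\,\mathcal{H}^{(1)}_a\subseteq\mathcal{H}^{(1)}_a$. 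Two orthogonal complementary subspaces that are both invariant under $T_2$ are reducing for $T_2$. Thus $P^{(1)}$, the projection onto $\mathcal{H}^{(1)}_u$, commutes with $T_2$ and $T_2^*$. By symmetry, the pieces for $T_2$ reduce $T_1$.

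\textbf{Step 2: the two decompositions commute.} Put $M=\mathcal{H}^{(1)}_u$. Since $T_2=T_2|_M\oplus T_2|_{M^\perp}$, we get
\[
\bigcap_n T_2^n\mathcal{H}=\bigcap_n T_2^nM\oplus\bigcap_nT_2^nM^\perp,\qquad \ker T_2^*=(\ker T_2^*\cap M)\oplus(\ker T_2^*\cap M^\perp).
\]
Consequently each of $\mathcal{H}^{(2)}_u$ and $\mathcal{H}^{(2)}_a$ splits along $M\oplus M^\perp$. This means $P^{(2)}$ commutes with $P^{(1)}$. Setting
\[
H_{00}=\mathcal{H}^{(1)}_u\cap\mathcal{H}^{(2)}_u,\quad H_{01}=\mathcal{H}^{(1)}_u\cap\mathcal{H}^{(2)}_a,\quad H_{10}=\mathcal{H}^{(1)}_a\cap\mathcal{H}^{(2)}_u,\quad H_{11}=\mathcal{H}^{(1)}_a\cap\mathcal{H}^{(2)}_a
\]
gives an orthogonal decomposition $\mathcal{H}=H_{00}\oplus H_{01}\oplus H_{10}\oplus H_{11}$. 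Each $H_{ij}$ is the range of a product of commuting projections that commute with $T_1,T_1^*,T_2,T_2^*$, so each reduces both operators. The unitarity claims are immediate: $T_1$ is unitary on $\mathcal{H}^{(1)}_u\supseteq H_{00}\oplus H_{01}$, and $T_2$ is unitary on $\mathcal{H}^{(2)}_u\supseteq H_{00}\oplus H_{10}$.

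\textbf{Step 3: the explicit formulas.} This is the main obstacle. The formula for $H_{00}$ requires
\[
\bigcap_mT_1^m\mathcal{H}\cap\bigcap_nT_2^n\mathcal{H}=\bigcap_{m,n}T_1^mT_2^n\mathcal{H}.
\]
The inclusion $\supseteq$ is clear. For $\subseteq$, take $x\in H_{00}$. Since $T_2$ is unitary on the reducing subspace $H_{00}$, we can write $x=T_2^ny$ with $y\in H_{00}$. Since $T_1$ is unitary on $H_{00}$, we can write $y=T_1^mz$ with $z\in H_{00}$. Hence $x=T_1^mT_2^n z$, using commutativity.

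For $H_{01}$, consider $T_2|_{\mathcal{H}^{(1)}_u}$. By Step 2 it is again of Wold type, with wandering space $\ker T_2^*\cap\mathcal{H}^{(1)}_u$. I would identify this space with $\mathcal{E}_{01}=\bigcap_mT_1^m(\ker T_2^*)$ as follows.
\begin{itemize}
\item[($\subseteq$)] $T_1$ is unitary on $\mathcal{H}^{(1)}_u$ and maps $\ker T_2^*$ into itself (by Step 1 with the roles of the indices swapped). Hence a vector in $\ker T_2^*\cap\mathcal{H}^{(1)}_u$ can be written as $T_1^m$ of a vector of the same kind, for every $m$.
\item[($\supseteq$)] If $x=T_1^mv_m$ with $v_m\in\ker T_2^*$ for all $m$, then $x\in\bigcap_m T_1^m\mathcal{H}$. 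Moreover, $x=T_1v_1\in\ker T_2^*$.
\end{itemize}
This gives $H_{01}=\bigvee_nT_2^n(\mathcal{E}_{01})$, and $H_{10}$ is handled symmetrically.

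Finally, consider $H_{11}$, and apply the same reasoning to $T_1|_{\mathcal{H}^{(2)}_a}$ and then to $T_2$ on the resulting piece. The wandering space for $T_2|_{\mathcal{H}^{(1)}_a}$ is $\ker T_2^*\cap\mathcal{H}^{(1)}_a$. Since $P^{(1)}$ commutes with $T_1L_1=I-P_{\ker T_1^*}$, the analytic part of $\ker T_2^*$ for $T_1$ is generated by $\ker T_1^*\cap\ker T_2^*$. This yields
\[
H_{11}=\bigvee_{m,n}T_1^mT_2^n(\ker\mathbf{T}^*).
\]
I expect the careful bookkeeping in this last identification to be the most delicate part. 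It requires checking that $\ker T_1^*$ and $\ker T_2^*$ are each invariant under the other operator, which follows from $L_iT_j=T_jL_i$.
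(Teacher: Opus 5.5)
The paper does not prove this statement; it is imported from \cite{Mul}, so there is no in-text argument to compare yours against. Your architecture is the natural one: each one-variable Wold piece of $T_j$ reduces the other operator, so the two Wold projections commute; you intersect them and then identify the wandering spaces. Steps 1, 2 and the $H_{00}$ part of Step 3 are sound. However, two justifications in Step 3 do not establish what they claim, and both are repaired by the same observation.

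\emph{The inclusion $\ker T_2^*\cap\mathcal{H}^{(1)}_u\subseteq\mathcal{E}_{01}$.} You argue from $T_1(\ker T_2^*)\subseteq\ker T_2^*$ together with unitarity of $T_1$ on $\mathcal{H}^{(1)}_u$. That is not enough, because a subspace invariant under a unitary $U$ need not be invariant under $U^{-1}$ (for example, $H^2\subset L^2(\mathbb{T})$ under multiplication by $z$). What you need is $T_1^*(\ker T_2^*)\subseteq\ker T_2^*$, which follows from $T_1^*T_2^*=T_2^*T_1^*$. Then $x=T_1^m(T_1^{*m}x)$ with $T_1^{*m}x\in\ker T_2^*$, since $T_1T_1^*=I$ on $\mathcal{H}^{(1)}_u$.

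\emph{The formula for $H_{11}$.} The fact you invoke, that $P^{(1)}$ commutes with $T_1L_1$, is trivial because $\ker T_1^*\subseteq\mathcal{H}^{(1)}_a$. It does not yield $\ker T_2^*\cap\mathcal{H}^{(1)}_a=\bigvee_m T_1^m(\ker\mathbf{T}^*)$. The operative fact is that $P_{\ker T_2^*}=I-T_2L_2$ commutes with $T_1$, since $T_1T_2L_2=T_2T_1L_2=T_2L_2T_1$. Because $P_{\ker T_2^*}$ is self-adjoint, it therefore also commutes with $T_1^*$. In other words, $\ker T_2^*$ reduces $T_1$; this is the proposition quoted from \cite[Corollary 4.2]{MJ}. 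Mere invariance, as in your last sentence, is not enough. With this in hand,
\[
P_{\ker T_2^*}\,T_1^m(\ker T_1^*)=T_1^m\,P_{\ker T_2^*}(\ker T_1^*)\subseteq T_1^m(\ker\mathbf{T}^*).
\]
Hence $P_{\ker T_2^*}$ maps $\mathcal{H}^{(1)}_a$ into $\bigvee_mT_1^m(\ker\mathbf{T}^*)$. Every $x\in\ker T_2^*\cap\mathcal{H}^{(1)}_a$ satisfies $x=P_{\ker T_2^*}x$, so it lies in this span. The reverse inclusion is immediate. Finally, applying $\bigvee_n T_2^n$ to this wandering space of $T_2|_{\mathcal{H}^{(1)}_a}$ gives the stated formula for $H_{11}$.
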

The following theorem plays a central role in proving Corollary~\ref{Seperate kernel condition}, where we show that the separate kernel condition together with the commutativity of $|T_1|$ and $|T_2|$ on $\ker \mathbf{T}^*$ guarantees the subnormality of the Cauchy dual of a left-inverse commuting toral $2$-isometry $(T_1,T_2)$.
\begin{theorem}\cite[Theorem 3.4]{Mul}\label{Block representation}
Let $(T_1, T_2)$ be a pair of left-inverse commuting toral $2$- isometries. Then there exists positive operator-valued measures $\mu_1,\mu_2,\nu_1,\nu_2$ on $\mathbb{T}$ such that 
\[
\mathcal{H} \cong {H}_{00} \oplus \mathcal{D}_{\mathcal{E}_{01}}(\mu_1) \oplus \mathcal{D}_{\mathcal{E}_{10}}(\mu_2)\oplus \mathcal{D}_{\mathcal{E}}(\nu_1,\nu_2).
\]
The operators $T_1$ and $T_2$ admit the following block diagonal representations:
\[
T_1 \cong 
\begin{bmatrix}
U_0 & 0 & 0 & 0 \\
0 & U_1 & 0 & 0 \\
0 & 0 & M_z & 0 \\
0 & 0 & 0 & M_{z_1}
\end{bmatrix},
\quad
T_2 \cong 
\begin{bmatrix}
V_0 & 0 & 0 & 0 \\
0 & M_z & 0 & 0 \\
0 & 0 & V_1 & 0 \\
0 & 0 & 0 & M_{z_2}
\end{bmatrix}.
\]
\end{theorem}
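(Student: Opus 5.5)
The plan is to assemble the block representation from two ingredients: the joint Wold-type decomposition in Theorem~\ref{Wold decomposition}, and the one- and two-variable Dirichlet-type models for \emph{analytic} pieces (Richter--Olofsson in one variable, \cite[Theorem 3.3]{Mul} in two variables).

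\textbf{Step 1: the hypothesis of Theorem~\ref{Wold decomposition} holds.} Taking $i=j$ in Definition~\ref{Toral $2$ isometry} gives $I-2T_i^*T_i+T_i^{*2}T_i^2=0$, so each $T_i$ is a $2$-isometry. A $2$-isometry is expansive and concave, so by Shimorin's and Richter's Wold-type theorem it admits a Wold-type decomposition. Specifically, $\mathcal{H}=\bigcap_n T_i^n\mathcal{H}\oplus\bigvee_n T_i^n\ker T_i^*$, with $T_i$ unitary on the first summand. Since the pair is left-inverse commuting, Theorem~\ref{Wold decomposition} applies. It gives the reducing decomposition $\mathcal{H}=H_{00}\oplus H_{01}\oplus H_{10}\oplus H_{11}$ with the stated unitarity properties. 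Because every summand reduces both $T_1$ and $T_2$, both operators are block diagonal with respect to it. It remains to identify each diagonal block.

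\textbf{Step 2: the blocks.} On $H_{00}$ both restrictions are unitary; call them $U_0$ and $V_0$. On $H_{01}$, $T_1$ is unitary; call it $U_1$. The restriction $S=T_2|_{H_{01}}$ is again a $2$-isometry, since $H_{01}$ reduces $T_2$.

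I will show that $S$ is analytic with wandering subspace $\mathcal{E}_{01}$, i.e. $\ker S^*=\mathcal{E}_{01}$ and $\bigcap_n S^nH_{01}=\{0\}$. The generating property $H_{01}=\bigvee_n T_2^n\mathcal{E}_{01}$ gives $H_{01}=\bigvee_n S^n\ker S^*$. For a $2$-isometry, the Wold decomposition then forces the unitary part $\bigcap_n S^nH_{01}$ to vanish, because it is orthogonal to $\bigvee_n S^n\ker S^*$. The inclusion $\mathcal{E}_{01}\subseteq\ker T_2^*$ comes from the definition of $\mathcal{E}_{01}$ together with $T_1$-invariance of $\ker T_2^*$. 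That invariance is exactly where left-inverse commutativity ($L_2T_1=T_1L_2$, so $T_1\ker L_2\subseteq\ker L_2$ and $\ker L_2=\ker T_2^*$) enters.

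Richter's model (Olofsson's operator-valued version) then gives a positive $B(\mathcal{E}_{01})$-valued measure $\mu_1$ on $\mathbb{T}$ with $S\cong M_z$ on $\mathcal{D}_{\mathcal{E}_{01}}(\mu_1)$. The block $H_{10}$ is handled symmetrically, giving $V_1$ and $M_z$ on $\mathcal{D}_{\mathcal{E}_{10}}(\mu_2)$.

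\textbf{Step 3: $H_{11}$.} Set $\mathbf{S}=(T_1|_{H_{11}},T_2|_{H_{11}})$. This pair is still commuting, left-inverse commuting and a toral $2$-isometry, because all three are identities among $T_i$, $T_i^*$, $(T_i^*T_i)^{-1}$, which compress to reducing subspaces. Its joint kernel is $\mathcal{E}:=\ker\mathbf{T}^*$: first, $\ker\mathbf{T}^*\subseteq H_{11}$; second, $H_{11}^\perp=H_{00}\oplus H_{01}\oplus H_{10}$ meets $\ker\mathbf{T}^*$ trivially, since on each of those summands at least one $T_i$ is unitary.

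For analyticity, $\bigcap_{m,n}\mathbf{S}^{(m,n)}H_{11}\subseteq\bigcap_{m,n}T_1^mT_2^n\mathcal{H}\cap H_{11}=H_{00}\cap H_{11}=\{0\}$. Applying \cite[Theorem 3.3]{Mul} to $\mathbf{S}$ yields measures $\nu_1,\nu_2$ with $\mathbf{S}\cong(M_{z_1},M_{z_2})$ on $\mathcal{D}_{\mathcal{E}}(\nu_1,\nu_2)$. The direct sum of the four unitary equivalences gives the stated representation.

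\textbf{Main obstacle.} The delicate points are the identifications of the wandering subspaces: $\ker(T_2|_{H_{01}})^*=\mathcal{E}_{01}$, and the joint kernel of $\mathbf{S}$ equals $\ker\mathbf{T}^*$. Both depend on left-inverse commutativity, which makes $\ker T_2^*$ invariant under $T_1$ (and $\ker T_1^*$ invariant under $T_2$). I would verify these first, using $\ker T_i^*=\ker L_i$, and only then invoke the one- and two-variable models.
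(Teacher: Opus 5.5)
The paper gives no proof of this statement: it is quoted from \cite[Theorem 3.4]{Mul}, whose first half is Theorem~\ref{Wold decomposition}. Your assembly is the natural route and is correct in substance: the joint Wold decomposition, Olofsson's model for the analytic $2$-isometries $T_2|_{H_{01}}$ and $T_1|_{H_{10}}$, and \cite[Theorem 3.3]{Mul} on $H_{11}$. Two points need adjusting.

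\textbf{The identification $\ker(T_2|_{H_{01}})^*=\mathcal{E}_{01}$.} This is simpler than you indicate and does not use left-inverse commutativity. The inclusion $\mathcal{E}_{01}\subseteq\ker T_2^*\cap H_{01}$ is just the $m=0$ term in the definition of $\mathcal{E}_{01}$, together with the $n=0$ term in $H_{01}=\bigvee_n T_2^n\mathcal{E}_{01}$. You assert the reverse inclusion but never argue it. It follows from the generating property: take $x\in\ker T_2^*\cap H_{01}$ with $x\perp\mathcal{E}_{01}$. Then $x\perp T_2^n\mathcal{E}_{01}$ for $n\ge 1$, since $x\perp \mathrm{ran}\,T_2$. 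Hence $x\perp H_{01}$, so $x=0$. Left-inverse commutativity is really used only through Theorem~\ref{Wold decomposition}, which makes the $H_{ij}$ reducing, and when passing to $H_{11}$.

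\textbf{Analyticity in Step 3.} The condition you check, $\bigcap_{m,n}T_1^mT_2^nH_{11}=\{0\}$, is not sufficient in general to support a Dirichlet-type model. For example, $(\lambda I, M_z)$ on $H^2$ with $|\lambda|=1$ is a left-inverse commuting toral $2$-isometry satisfying it, yet $\ker\mathbf{T}^*=\{0\}$.

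What actually makes \cite[Theorem 3.3]{Mul} applicable to $\mathbf{S}$ is that $\ker\mathbf{S}^*=\ker\mathbf{T}^*$ generates $H_{11}$, which holds by the definition of $H_{11}$. In fact each $T_i|_{H_{11}}$ is analytic. By left-inverse commutativity, $T_2^n(\ker\mathbf{T}^*)\subseteq\ker T_1^*$. This puts $H_{11}$ inside $\bigvee_m T_1^m\ker T_1^*$, the analytic part of $T_1$ in its own Wold decomposition. Consequently $\bigcap_m T_1^m H_{11}\subseteq \bigcap_m T_1^m\mathcal{H}\cap H_{11}=\{0\}$, and symmetrically for $T_2$.

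With these two fixes your argument is complete.
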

The following lemma plays an important role in ensuring that the Cauchy duals of the pairs in Theorem~\ref{Block representation}, namely $(U_1,M_z)$ and $(M_z,V_1)$, are commuting contractions and which is also helpful to prove Corollary~\ref{Seperate kernel condition} .
\begin{lemma}\label{Commuting}
Let $U$ be a unitary operator and let $T$ be a left-invertible operator on $B(\mathcal{H})$ such that $UT = TU$. Then the Cauchy dual tuple $(U', T')$ is commuting and consists of contractions. 
\end{lemma}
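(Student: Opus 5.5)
We need to prove Lemma \ref{Commuting}: for $U$ unitary and $T$ left-invertible with $UT=TU$, the Cauchy dual pair $(U',T')$ is commuting and consists of contractions.

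The plan is to compute both Cauchy duals explicitly and use Fuglede's theorem. Since $U$ is unitary, $U^*U=I$, so $U'=U(U^*U)^{-1}=U$. In particular $\|U'\|=1$ and $U'$ is a contraction. The pair to study is therefore $(U,T')$ with $T'=T(T^*T)^{-1}$.

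For commutativity, I apply the Fuglede theorem to the normal operator $U$. From $UT=TU$ it gives $U^*T=TU^*$; taking adjoints yields $T^*U=UT^*$. Combining these,
\[
U(T^*T)=T^*UT=(T^*T)U,
\]
so $U$ commutes with the invertible positive operator $T^*T$, and hence with its inverse. Therefore
\[
UT'=UT(T^*T)^{-1}=TU(T^*T)^{-1}=T(T^*T)^{-1}U=T'U,
\]
and the Cauchy dual pair commutes. This step is direct even without Fuglede: since $U^*=U^{-1}$ and $U^{-1}$ commutes with anything that commutes with $U$, we get $U^*T=TU^*$ immediately.

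For contractivity of $T'$, the bare hypothesis is not enough. We have $T'^*T'=(T^*T)^{-1}$, so $\|T'\|\le 1$ holds exactly when $T^*T\ge I$. The lemma is applied to the pairs $(U_1,M_z)$ and $(M_z,V_1)$ from Theorem~\ref{Block representation}, where $M_z$ is a $2$-isometry on a Dirichlet-type space. I will therefore read the statement in that context, with $T$ a $2$-isometry (or more generally $T$ expansive). In that case \cite[Lemma 1]{S.R} gives $\|Tf\|\ge\|f\|$, so $T^*T\ge I$. This yields $(T^*T)^{-1}\le I$, i.e.\ $\|T'f\|^2=\langle (T^*T)^{-1}f,f\rangle\le\|f\|^2$, as in \eqref{Cauchy dual contraction}.

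The main obstacle is this hidden hypothesis. For a general left-invertible $T$, for instance $T=\tfrac12 I$, the dual $T'=2I$ is not a contraction. I will state explicitly that the contraction claim uses expansivity of $T$, which holds in the intended application. Commutativity holds in full generality.
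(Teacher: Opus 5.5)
Your commutativity argument is the paper's argument: $UT=TU$ gives $UT^*=T^*U$, so $U$ commutes with $T^*T$ and hence with $(T^*T)^{-1}$, which yields $U'T'=T'U'$. You also make explicit that $U'=U$.

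The paper's proof stops at commutativity and never addresses the contraction claim. You are right that this claim is false as stated: your example $U=I$, $T=\tfrac12 I$, $T'=2I$ is valid. Your added hypothesis $T^*T\ge I$ is exactly what is needed, and it is available where the lemma is used in Corollary~\ref{Seperate kernel condition}. There, $M_z$ is the restriction of a component of a toral $2$-isometry to a reducing subspace, hence a $2$-isometry, so \cite[Lemma 1]{S.R} applies.
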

\begin{proof}
  As $UT=TU$  therefore $UT^*=T^*U.$ Combining both of these, we have $U(T^*T)=(T^*T)U$, as $T^*T$ is invertible  so $U(T^*T)^{-1}=(T^*T)^{-1}U$ and hence $U'T'=T'U'$.
\end{proof}
The following proposition plays a crucial role in the proof of Theorem~\ref{Orthogonality}.
\begin{proposition}\cite[Corollary 4.2]{MJ}
Let $\mathbf{T} = (T_1, \dots, T_d) \in B(\mathcal{H})^d$ be any commuting $n$-tuple of bounded linear operators. For any $i,j \in \{1,\dots,d\}$ with $i \neq j$, the following are equivalent:
\begin{enumerate}
    \item[(a)] $L_i T_j = T_j L_i$,
    \item[(b)] $\ker T_i^{*}$ is a $T_j$-reducing subspace of $\mathcal{H}$.
\end{enumerate}
\end{proposition}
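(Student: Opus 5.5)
The plan is to reduce both conditions to a single statement: $T_j$ commutes with the orthogonal projection onto $\ker T_i^{*}$. Throughout, $T_i$ is left-invertible, as is implicit in the definition of $L_i=(T_i^*T_i)^{-1}T_i^*$. The key object is
\[
Q_i := T_i L_i = T_i (T_i^*T_i)^{-1} T_i^*.
\]
First I check that $Q_i$ is the orthogonal projection onto $\operatorname{ran} T_i$:
\begin{itemize}
\item it is self-adjoint, because $(T_i^*T_i)^{-1}$ is self-adjoint;
\item it is idempotent, because $L_iT_i=I$;
\item its range is $\operatorname{ran}T_i$, because $Q_iT_i=T_i$ and $\operatorname{ran}Q_i\subseteq \operatorname{ran}T_i$.
\end{itemize}
Since $\operatorname{ran}T_i$ is closed and $(\operatorname{ran}T_i)^\perp=\ker T_i^*$, the operator $P_i:=I-Q_i$ is the orthogonal projection onto $\ker T_i^{*}$.

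Next I use the standard fact that a closed subspace reduces an operator if and only if the orthogonal projection onto it commutes with that operator. So (b) is equivalent to $P_iT_j=T_jP_i$, which in turn is equivalent to
\[
T_iL_iT_j=T_jT_iL_i .
\]
The whole proof then amounts to showing that this identity is equivalent to (a).

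For (a)$\Rightarrow$(b): assume $L_iT_j=T_jL_i$. Multiplying on the left by $T_i$ and using $T_iT_j=T_jT_i$ gives
\[
T_iL_iT_j=T_iT_jL_i=T_jT_iL_i,
\]
so $Q_i$, and hence $P_i$, commutes with $T_j$. Therefore $\ker T_i^*$ reduces $T_j$.

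For (b)$\Rightarrow$(a): from $Q_iT_j=T_jQ_i$ and commutativity we get $T_iL_iT_j=T_jT_iL_i=T_iT_jL_i$. Multiplying on the left by $L_i$ and using $L_iT_i=I$ yields $L_iT_j=T_jL_i$.

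There is no real obstacle here. The only points needing care are that $Q_i$ is genuinely an orthogonal projection rather than just an idempotent, which rests on self-adjointness, and the use of the left-inverse identity $L_iT_i=I$ to cancel $T_i$ in the converse direction.
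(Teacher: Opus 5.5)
Your proof is correct and complete. $Q_i=T_iL_i$ is the orthogonal projection onto $\operatorname{ran}T_i$: it is self-adjoint, and it is idempotent because $L_iT_i=I$. Hence $I-T_iL_i$ is the projection onto $\ker T_i^{*}$, and both implications follow by multiplying by $T_i$ or by $L_i$ and using $T_iT_j=T_jT_i$.

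The paper gives no argument for this proposition and only quotes \cite[Corollary 4.2]{MJ}, so there is no proof in the paper to compare against. Your computation therefore supplies a self-contained proof. It uses only the left-invertibility of $T_i$, not of $T_j$, which, as you noted, is implicit in the statement because $L_i$ must be defined.
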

\subsection{\textbf{Operator-valued multishift: }}\label{Subsection of operator-valued multishift}
To prove Theorem \ref{Main theorem}, we require the notion of an operator-valued multishift, which we recall in this subsection. 
Let $\{ \mathcal{H}_\alpha : \alpha \in \mathbb{Z}_{+}^d \}$ be a multisequence of complex separable Hilbert spaces and let
$\mathcal{H}$ = $\bigoplus_{\alpha \in \mathbb{Z}_{+}^d} \mathcal{H}_\alpha$ be the orthogonal direct sum. Then $\mathcal{H}$ is a Hilbert space with respect to the inner product
\[
\langle x, y \rangle_{\mathcal{H}} := \sum_{\alpha \in \mathbb{Z}_{+}^d} \langle x_\alpha, y_\alpha \rangle_{\mathcal{H}_\alpha},
\]
for any $x = \oplus_{\alpha \in \mathbb{Z}_{+}^d} x_\alpha,\;
y = \oplus_{\alpha \in \mathbb{Z}_{+}^d} y_\alpha$ in $\mathcal{H}.$
If $\mathcal{H}_\alpha = \mathcal{H}$ for all $\alpha \in \mathbb{Z}_{+}^d$, then we denote $\mathcal{H} = \ell^2_{\mathcal{H}}(\mathbb{Z}_{+}^d)$. Let $\{ A^{(j)}_\alpha : \alpha \in \mathbb{Z}_{+}^d,\; j = 1, \dots, d \}$ be a multisequence of bounded linear operators $A^{(j)}_\alpha : \mathcal{H}_\alpha \to \mathcal{H}_{\alpha + \varepsilon_j}$, where $\varepsilon_j$ is the multi-index with $1$ at the $j$-th coordinate and $0$ elsewhere. An operator-valued multishift $\boldsymbol T = (T_1, \dots, T_d)$ on $\mathcal{H}$ with weights $\{A^{(j)}_\alpha\}$ is defined by
\[
D(T_j) := \Big\{ \oplus_{\alpha \in \mathbb{Z}_{+}^d} x_\alpha \in \mathcal{H} :
\sum_{\alpha \in \mathbb{Z}_{+}^d} \| A^{(j)}_\alpha x_\alpha \|^2 < \infty \Big\},
\]
and
\[
T_j \left( \oplus_{\alpha \in \mathbb{Z}_{+}^d} x_\alpha \right)
:= \oplus_{\alpha \in \mathbb{Z}_{+}^d} A^{(j)}_{\alpha - \varepsilon_j} \, x_{\alpha - \varepsilon_j},
\quad j = 1, \dots, d.
\]
If $\alpha_j = 0$ for some $\alpha \in \mathbb{Z}_{+}^d$, then we interpret $A^{(j)}_{\alpha - \varepsilon_j} = 0,\ 
x_{\alpha - \varepsilon_j} = 0, \
\mathcal{H}_{\alpha - \varepsilon_j} = \{0\}$. More generally, if any coordinate of $\alpha$ is negative, we set $\mathcal{H}_\alpha = \{0\}, \
A^{(j)}_\alpha = 0, \
x_\alpha = 0.$ 
Each $T_j$ is a densely defined linear operator on $\mathcal{H}$. Moreover, $T_j$ is bounded if and only if
\begin{eqnarray}\label{Uniformly Bounded Weights}
\sup_{\alpha \in \mathbb{Z}_{+}^d} \| A^{(j)}_\alpha \| < \infty.
\end{eqnarray}
Further, $T_i$ commutes with $T_j$ if and only if
\begin{eqnarray}\label{Commutativity-weights}
A^{(i)}_{\alpha + \varepsilon_j} A^{(j)}_\alpha
=
A^{(j)}_{\alpha + \varepsilon_i} A^{(i)}_\alpha
\quad \text{for all } \alpha \in \mathbb{Z}_{+}^d.
\end{eqnarray}
Equivalently, the following diagram commutes:
\[
\begin{array}{ccc}
\mathcal{H}_\alpha & \xrightarrow{A^{(j)}_\alpha} & \mathcal{H}_{\alpha + \varepsilon_j} \\
\downarrow{A^{(i)}_\alpha} &  & \downarrow{A^{(i)}_{\alpha + \varepsilon_j}} \\
\mathcal{H}_{\alpha + \varepsilon_i} & \xrightarrow{A^{(j)}_{\alpha + \varepsilon_i}} & \mathcal{H}_{\alpha + \varepsilon_i + \varepsilon_j}
\end{array}
\]
We say that $\boldsymbol T$ is a commuting operator-valued multishift if the weights satisfy \eqref{Uniformly Bounded Weights} and  \eqref{Commutativity-weights}. For more details of operator-valued multishifts, the reader is referred to  \cite{R.G} and \cite{R.S}.
\section{\textbf{Model for left-inverse commuting toral $2$-isometries with kernel condition}}\label{Model Section}
The goal of this section is to show that a pair of analytic left-inverse commuting toral $2$-isometry satisfying the joint kernel condition is unitarily equivalent to an operator-valued $2$-variable weighted shift (see Theorem \ref{Model}).
The following definition is taken from \cite[page 10]{R.J}.
\begin{definition}[D-slice ordering:]
    For each $k \in \mathbb{Z}$, define ${P}_k := \{(x,y) \in \mathbb{Z}^2 : x+y = k\}.$
Then the family $\{{P}_k\}_{k\in\mathbb{Z}}$ forms a collection of pairwise disjoint subsets of $\mathbb{Z}^2$ satisfying $\mathbb{Z}^2 = \bigsqcup_{k\in\mathbb{Z}} {P}_k .$
The \emph{D-slice ordering} on $\mathbb{Z}^2$ is defined as follows. For each $\ell\leq m$ in $\mathbb Z,$ let $(x_1,y_1)\in {P}_\ell$, and $(x_2,y_2)\in {P}_m .$
Then we declare $(x_1,y_1) < (x_2,y_2)$ if either
\begin{enumerate}
    \item $\ell < m$, or
    \item $\ell = m$ and $(x_1,y_1)$ precedes $(x_2,y_2)$ in the lexicographic ordering on ${P}_\ell \subseteq \mathbb{Z}^2$.
\end{enumerate}
\end{definition}
Now we characterize  left-inverse commuting analytic toral $2$-isometric pair of operators satisfying the joint kernel condition.
\begin{theorem}\label{Orthogonality}
Let $\boldsymbol T=(T_1, T_2)$ be a pair of left-inverse commuting analytic toral $2$-isometries satisfying the joint kernel condition $T_j^* T_j (\ker \mathbf{T}^*) \subseteq \ker \mathbf{T}^*$, for $j=1,2$, then
\begin{eqnarray}\label{orthogonality condition}
    T_1^m T_2^n (\ker \mathbf{T}^*) \perp T_1^p T_2^q (\ker \mathbf{T}^*)
\quad \text{whenever } (m,n) \neq (p,q),
\end{eqnarray}
where $\ker \mathbf{T}^* = \ker T_1^* \cap \ker T_2^*.$
\end{theorem}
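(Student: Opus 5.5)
The plan is to reduce \eqref{orthogonality condition} to a computation with the defining identity of a toral $2$-isometry, together with two facts.

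\textbf{Step 1 (reduction).} By symmetry of the hypotheses in $T_1,T_2$, it suffices to treat $m>p$; the case $m=p$, $n\neq q$ is handled the same way with the roles of $T_1$ and $T_2$ exchanged. Fix $x,y\in\ker\mathbf{T}^*$.

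\textbf{Step 2 (the two facts).} First, by \cite[Corollary 4.2]{MJ}, the left-inverse commuting hypothesis makes $\ker T_1^*$ reducing for $T_2$ and $\ker T_2^*$ reducing for $T_1$. Hence $T_2^q y\in\ker T_1^*$ for every $q$, since $y\in\ker T_1^*$. Second, Definition~\ref{Toral $2$ isometry} with $n=0$ gives
\[
T_1^{*p}T_1^p=pT_1^*T_1-(p-1)I .
\]

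\textbf{Step 3 (the computation).} Using $T_1T_2=T_2T_1$, write $T_1^mT_2^nx=T_1^p\bigl(T_1^{m-p}T_2^nx\bigr)$. Then
\[
\langle T_1^mT_2^nx,\,T_1^pT_2^qy\rangle
= p\,\langle T_1^*T_1\,T_1^{m-p}T_2^nx,\,T_2^qy\rangle-(p-1)\langle T_1^{m-p}T_2^nx,\,T_2^qy\rangle .
\]
The second term vanishes: $m-p\geq 1$, so $T_1^{m-p}T_2^nx\in\operatorname{ran}T_1$, while $T_2^qy\in\ker T_1^*=(\operatorname{ran}T_1)^\perp$. Moving $T_1^*T_1$ across, the first term becomes $p\,\langle T_1^{m-p}T_2^nx,\,T_1^*T_1T_2^qy\rangle$. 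This also vanishes once we know that
\[
T_1^*T_1\,T_2^q y\in\ker T_1^* \qquad\text{for all } q\ge 0,\ y\in\ker\mathbf{T}^*. \tag{$\ast$}
\]

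\textbf{Step 4 (proving $(\ast)$: the main obstacle).} For $q=0$, $(\ast)$ is exactly the joint kernel condition. For $q\geq1$, we need to know how $T_1^*T_1$ interacts with $T_2$ on $\ker T_1^*$.

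What I would try first is induction on $q$, using the mixed toral identity
\[
T_2^*T_1^*T_1T_2=T_1^*T_1+T_2^*T_2-I
\]
and its iterates $T_2^{*q}T_1^*T_1T_2^q=T_1^*T_1+qT_2^*T_2-qI$, which follow from the displayed formula in Definition~\ref{Toral $2$ isometry}. The argument would run as follows:
\begin{itemize}
\item Decompose $\mathcal{H}=\ker T_1^*\oplus\operatorname{ran}T_1$. Both summands reduce $T_2$, so $T_2$ commutes with the projection $P=T_1L_1$ onto $\operatorname{ran}T_1$.
\item We then need $P\,T_1^*T_1T_2^qy=0$. Equivalently, for every $z$,
\[
\langle T_1^*T_1T_2^qy,\,T_1z\rangle=\langle T_1T_2^qy,\,T_1^2z\rangle=0 .
\]
\item Apply the $2$-isometry identity $T_1^{*2}T_1^2=2T_1^*T_1-I$ to turn this into a statement about $\langle T_1^*T_1T_2^qy,\,z'\rangle$ with $z'\in\operatorname{ran}T_1$, and try to close the induction using $T_2$-reducibility and the joint kernel condition at level $q-1$.
\end{itemize}
If this direct route stalls, the fallback is to show directly that $T_1^*T_1$ commutes with $T_2$ on $\ker T_1^*$. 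For this I would use $L_1T_2=T_2L_1$ together with $L_1T_1=I$ to compare $(T_1^*T_1)^{-1}T_1^*T_2$ with $T_2(T_1^*T_1)^{-1}T_1^*$, and then invert. Once $(\ast)$ holds, Step 3 gives \eqref{orthogonality condition}.

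Analyticity is not needed for the orthogonality itself; it enters only later, to conclude that these orthogonal pieces span $\mathcal{H}$.
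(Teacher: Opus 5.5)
Your Steps 1--3 are correct: one application of $T_1^{*p}T_1^p=pT_1^*T_1-(p-1)I$, together with the $T_2$-invariance of $\ker T_1^*$, reduces every case to the lemma $(\ast)$. But $(\ast)$ carries essentially all of the content, since by your Step 3 it already implies the theorem, and Step 4 does not prove it.

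Your fallback aims at a false statement: $T_1^*T_1$ need not commute with $T_2$ on $\ker T_1^*$. Take the scalar $2$-variable weighted shift with $\|T_1^mT_2^ne_{0,0}\|^2=1+m\alpha_1+n\alpha_2$ and $\alpha_1,\alpha_2>0$; it satisfies every hypothesis of the theorem. For it,
\[
T_1^*T_1T_2e_{0,0}=\tfrac{1+\alpha_1+\alpha_2}{1+\alpha_2}\,T_2e_{0,0},\qquad T_2T_1^*T_1e_{0,0}=(1+\alpha_1)\,T_2e_{0,0},
\]
and these differ because $\alpha_1\alpha_2\neq0$. Note also that $L_1T_2=T_2L_1$ says nothing on $\ker T_1^*$, since both sides vanish there.

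Your primary route is not specific enough to check, and its stated mechanism does not apply. The identity $T_1^{*2}T_1^2=2T_1^*T_1-I$ cannot be used on $\langle T_1T_2^qy,T_1^2z\rangle$, because the left-hand vector carries only one factor of $T_1$.

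The missing ingredient is the other half of left-inverse commutativity, which you record in Step 2 but never use: $\ker T_2^*$ is $T_1$-invariant. With it, your primary route closes in a few lines.

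Assume $(\ast)$ for $q-1$; the case $q=0$ is exactly the joint kernel condition. Put $w=T_1^*T_1T_2^qy$, and let $P=T_1L_1$ be the orthogonal projection onto $\operatorname{ran}T_1$, which commutes with $T_2$ and $T_2^*$. The mixed identity applied to $T_2^{q-1}y$ gives
\[
T_2^*w=T_1^*T_1T_2^{q-1}y+T_2^*T_2^{q}y-T_2^{q-1}y\in\ker T_1^*.
\]
Hence $T_2^*Pw=PT_2^*w=0$, so $Pw\in\ker T_2^*$, and therefore $T_1Pw\in\ker T_2^*$. Consequently, for $q\geq1$,
\[
\|Pw\|^2=\langle w,Pw\rangle=\langle T_1T_2^{q}y,\,T_1Pw\rangle=\langle T_2^{q-1}T_1y,\,T_2^*T_1Pw\rangle=0,
\]
which is $(\ast)$.

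With this lemma your argument is complete, and it is a genuinely different and more economical route than the paper's. The paper runs a case-by-case induction on $m+n$ directly on the orthogonality relations. It lowers the degree with the one-variable recursion $T_1^{*m}T_1^m=2T_1^{*(m-1)}T_1^{m-1}-T_1^{*(m-2)}T_1^{m-2}$ and with the mixed identity.

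Your route also makes the role of the joint kernel condition transparent: it enters exactly once, as the base case of $(\ast)$. The paper's case analysis never invokes it explicitly. Yet it is needed at the first level, for instance for $\langle T_2f,T_2^qg\rangle$ with $q\geq2$, where the paper's displayed reduction involves $T_2^{*(N-1)}$ and so does not apply when $N=0$. Your remark that analyticity plays no role in this statement is correct.
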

\begin{proof}
For each $N\in\mathbb Z_+,$ define ${P}_N:= \{(m,n) \in \mathbb{Z}_+^2 : m+n = N\}.$
The proof is based on induction on $N \in \mathbb{Z}_+$.  
As the inner product is conjugate linear, therefore it is enough to prove \eqref{orthogonality condition} for  $(m,n) \in {P}_N$ and $(p,q) \in {P}_{N+j}$ with $j \ge 0$. 
\begin{itemize}
\item[\textbf{Step 1:}] $N=0.$

That is $(m,n)=(0,0)$ and $(p,q) \in {P}_j$ for $j\geq 1$. Since at least one of $p,q$ is nonzero, without loss of generality, we may assume that $p \neq 0.$ Then $\langle f, T_1^p T_2^q g \rangle
= \langle T_1^* f, T_1^{p-1} T_2^q g \rangle = 0,$ for all $f,g \in \ker \mathbf{T}^*$. 

\item[\textbf{Step 2:}]
Assume the condition \eqref{orthogonality condition} holds for all $(m,n)\in  P_k$ and $(p,q)\in P_{k+j}$ for each $k\in \{0,\ldots,N\}$ for some $N\in\mathbb Z_+$ and for all $j\in \mathbb Z_+$.  

\item[\textbf{Step 3:}] Take $(m,n)\in {P}_{N+1}$ and $(p,q)\in {P}_{N+1+j}$, for some $j\geq 0.$ This step is divided into two primary cases which are further divided into several sub cases.
\begin{itemize}
\item[\textbf{Case 1:}] Suppose $j\geq 1$.
\begin{enumerate}
\item Let $m=0$ and $n=N+1$.
\begin{enumerate}
\item If $p=0$ and $q\geq N+2$, then using $2$-isometry condition, we have
\begin{eqnarray*}
   \hspace{2cm} \langle T_2^{N+1}f,\, T_2^qg\rangle
&=&\big\langle (T_2^*)^{N+1}T_2^{N+1}f,\,
T_2^{q-(N+1)}g \big\rangle \\
&=&\big\langle
\big(2T_2^{*N}T_2^N-T_2^{*(N-1)}T_2^{N-1}\big)f,\,
T_2^{q-(N+1)}g
\big\rangle \\
&=&0.
\end{eqnarray*}
\item If $p\geq N+2$ and $q=0$, then 
$\langle T_2^{N+1}f,\, T_1^pg\rangle
=\langle T_2^Nf,\, T_2^*T_1^pg\rangle
=0,$ 
because $T_1^pg\in \ker T_2^*$.
\item Let $p\neq 0$ and $q\neq 0$. Then
$
\langle T_2^{N+1}f,\, T_1^pT_2^qg\rangle
=\langle T_1^*T_2^{N+1}f,\,
T_1^{p-1}T_2^qg\rangle
=0,$ since $T_2^{N+1}f\in \ker T_1^*$.
Similarly, for $m=N+1$ and $n=0$, we obtain
$\langle T_1^{N+1}f,\,
T_1^pT_2^qg\rangle =0$ for all $(p,q)\in {P}_{N+j}$, where $j\geq 2$.
\end{enumerate}
\smallskip

\noindent
\item Let $m=1$ and $n=N$.
\begin{enumerate}
\item If $p=0$ and $q\geq N+2$, then
$\langle T_1T_2^Nf,\, T_2^qg\rangle
=\langle T_2^Nf,\, T_1^*T_2^qg\rangle
=0,$ as $T_2g\in \ker T_1^*$.
\item If $p\geq N+2$ and $q=0$, then similarly, $\langle T_1T_2^Nf,\, T_1^pg\rangle =0.$
\item If $p\geq 1$ and $q\geq 1$, then using toral $2$-isometry, we have
\[
\begin{aligned}
\hspace{2cm} \langle T_1T_2^Nf,\, T_1^pT_2^qg\rangle
&=\left\langle
(T_2^*T_1^*T_1T_2)T_2^{N-1}f,\,
T_1^{p-1}T_2^{q-1}g
\right\rangle \\
&=\left\langle
(T_1^*T_1+T_2^*T_2-I)T_2^{N-1}f,\,
T_1^{p-1}T_2^{q-1}g
\right\rangle \\
&=0.
\end{aligned}
\]
\item For $m=N$ and $n=1$, similarly, we obtain
$\langle T_1^NT_2f,\,
T_1^pT_2^qg\rangle =0$ for all $(p,q)\in {P}_{N+j}$, where $j\geq 2$.
\item Finally, let $m+n=N+1$ and $p+q\geq N+2$ with $m\geq 2$.
Then either $p-m>0$ or $q-n>0$. Assume that $p-m>0$.
Since $T_1$ is a $2$-isometry, $(T_1^*)^mT_1^m
-2(T_1^*)^{m-1}T_1^{m-1}
+(T_1^*)^{m-2}T_1^{m-2}=0.$
Therefore, $\langle T_1^mT_2^nf,\,
T_1^pT_2^qg\rangle
=
\left\langle
(T_1^*)^mT_1^mT_2^nf,\,
T_1^{p-m}T_2^qg
\right\rangle
=0.$
The case $q-n>0$ can be treated similarly.
\end{enumerate}
\end{enumerate}
\item[\textbf{Case 2:}] Suppose $j=0$.
Since $m+n=p+q=N+1,$ either $m<p$ and $n>q$, or $m>p$ and $n<q$.
Without loss of generality, assume that $m<p$ and $n>q$.
\begin{enumerate}
\item Let $m=0$ and $n=N+1$. Then
\[
\langle T_2^{N+1}f,\,
T_1^pT_2^qg\rangle
=
\langle T_1^*T_2^{N+1}f,\,
T_1^{p-1}T_2^qg\rangle
=0.
\]

\item Let $m=1$ and $n=N$.
\begin{enumerate}
\item If $q=0$, then
\[
\langle T_1T_2^Nf,\,
T_1^{N+1}g\rangle
=
\langle T_1T_2^{N-1}f,\,
T_2^*T_1^{N+1}g\rangle
=0.
\]

\item If $q\geq 1$, then using the toral $2$-isometry relation $T_2^{*q}T_1^*T_1T_2^q
=
T_1^*T_1+q\,(T_2^*T_2-I),$ 
we obtain
\[
\begin{aligned}
\hspace{2cm} \langle T_1T_2^Nf,\,
T_1^pT_2^qg\rangle
&=
\left\langle
T_2^{*q}T_1^*T_1T_2^q\,T_2^{N-q}f,\,
T_1^{p-1}g
\right\rangle \\
&=
\left\langle
\big(T_1^*T_1+q(T_2^*T_2-I)\big)
T_2^{N-q}f,\,
T_1^{p-1}g
\right\rangle \\
&=0.
\end{aligned}
\]
\end{enumerate}
\item For $m=N$ and $n=1$, similarly, we have $\langle T_1^NT_2f,\,
T_1^pT_2^qg\rangle
=0.$
\item Now let $m\geq 2$.
Using the $2$-isometry relation $(T_1^*)^mT_1^m =
2(T_1^*)^{m-1}T_1^{m-1}-(T_1^*)^{m-2}T_1^{m-2},$ we obtain,
\begin{eqnarray*}
    \hspace{3.2cm} \langle T_1^mT_2^nf,\,
T_1^pT_2^qg\rangle
&=&
\left\langle
(T_1^*)^mT_1^mT_2^nf,\,
T_1^{p-m}T_2^qg
\right\rangle \\
&=&
\left\langle
\Big(
2(T_1^*)^{m-1}T_1^{m-1}
-
(T_1^*)^{m-2}T_1^{m-2}
\Big)
T_2^nf,\,
T_1^{p-m}T_2^qg
\right\rangle \\
&=&0.
\end{eqnarray*}
\item For $n\geq 2$, similarly, one obtains $\langle T_1^mT_2^nf,\,
T_1^pT_2^qg\rangle
=0.$
\end{enumerate}
\end{itemize}
\end{itemize}
Hence, by induction, $T_1^mT_2^n(\ker \mathbf{T}^*) \perp T_1^pT_2^q(\ker \mathbf{T}^*)$ whenever $(m,n)\neq (p,q).$
Therefore, $H_{11}
=
\bigoplus_{m,n\geq 0}
T_1^mT_2^n(\ker \mathbf{T}^*).$ This completes the proof.
\end{proof}
Now we prove that a pair of left-inverse commuting analytic toral $2$-isometries satisfying the joint kernel condition is unitarily equivalent to a $2$-variable operator-valued weighted shift with invertible weights, whose initial two weights are positive.

\begin{theorem}\label{Model}
  If $\boldsymbol T:=(T_1,T_2)$ be left-inverse commuting analytic  $2$-isometry with joint kernel condition then $\boldsymbol T$ is unitary equivalent to a toral $2$-isometric operator-valued weighted shifts on $\ell^2_{\mathbb{Z}_{+}^2}(\ker \mathbf{T}^*)$ with invertible operator weights $\{W_{I}^{(j)}:j=1,2\}_{I\in \mathbb{Z}_{+}^2}$ such that the initial two weights $W_{0,0}^{(1)}$ and $W_{0,0}^{(2)}$ are positive. 
  \end{theorem}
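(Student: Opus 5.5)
Write $\mathcal{E}:=\ker \mathbf{T}^*$. By analyticity, the joint Wold-type decomposition (Theorem~\ref{Wold decomposition}) gives $H_{00}=\{0\}$. I would first argue that $H_{01}=H_{10}=\{0\}$ as well, so that $\mathcal{H}=H_{11}=\bigvee_{m,n}T_1^mT_2^n\mathcal{E}$. This is immediate if one uses the model of \cite[Theorem 3.3]{Mul} for analytic left-inverse commuting toral $2$-isometries, which realizes $\mathcal{H}$ as $\mathcal{D}_{\mathcal{E}}(\mu_1,\mu_2)$ with $\mathcal{E}$ generating. Theorem~\ref{Orthogonality} then upgrades the closed span to an orthogonal sum $\mathcal{H}=\bigoplus_{(m,n)\in\mathbb{Z}_+^2}\mathcal{H}_{m,n}$, where $\mathcal{H}_{m,n}:=T_1^mT_2^n(\mathcal{E})$. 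Each $\mathcal{H}_{m,n}$ is closed, because $T_1^mT_2^n$ is left-invertible; each $T_i$ is a $2$-isometry, hence bounded below. Moreover $T_1^mT_2^n$ is an injective map of $\mathcal{E}$ onto $\mathcal{H}_{m,n}$, so it is a bounded invertible operator $\mathcal{E}\to\mathcal{H}_{m,n}$. By construction $T_1\mathcal{H}_{m,n}=\mathcal{H}_{m+1,n}$ and $T_2\mathcal{H}_{m,n}=\mathcal{H}_{m,n+1}$. Hence $\mathbf{T}$ is an operator-valued $2$-shift in the sense of Subsection~\ref{Subsection of operator-valued multishift}, with weights $A^{(j)}_{m,n}=T_j|_{\mathcal{H}_{m,n}}$, and these are invertible onto their images.

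To transport everything to $\ell^2_{\mathbb{Z}_+^2}(\mathcal{E})$ I need unitaries $U_{m,n}:\mathcal{E}\to\mathcal{H}_{m,n}$. I would choose them by polar decomposition. Let $S_{m,n}:=T_1^mT_2^n|_{\mathcal{E}}:\mathcal{E}\to\mathcal{H}_{m,n}$, and set $U_{m,n}:=S_{m,n}|S_{m,n}|^{-1}$, which is unitary since $S_{m,n}$ is invertible. Define $U:\ell^2_{\mathbb{Z}_+^2}(\mathcal{E})\to\mathcal{H}$ by $U(\oplus x_{m,n})=\sum U_{m,n}x_{m,n}$, and set $W^{(j)}_{m,n}:=U_{(m,n)+\varepsilon_j}^{*}\,T_j\,U_{m,n}\in B(\mathcal{E})$. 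Then $U^*T_jU$ is the weighted shift with weights $W^{(j)}_{m,n}$. These weights are invertible, uniformly bounded by $\|T_j\|$, and satisfy \eqref{Commutativity-weights} automatically, since $U^*T_jU$ commute. Being unitarily equivalent to $\mathbf{T}$, the shift is again a toral $2$-isometry.

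The main point is positivity of the initial weights. Since $U_{0,0}=I_{\mathcal{E}}$ and $S_{1,0}=T_1|_{\mathcal{E}}$, we have
\[
W^{(1)}_{0,0}=U_{1,0}^*T_1|_{\mathcal{E}}=|S_{1,0}|=\big(P_{\mathcal{E}}T_1^*T_1|_{\mathcal{E}}\big)^{1/2}.
\]
By the joint kernel condition, $T_1^*T_1$ leaves $\mathcal{E}$ invariant. Since $T_1^*T_1$ is selfadjoint, $\mathcal{E}$ is then reducing for $T_1^*T_1$, so $|S_{1,0}|=|T_1|\,|_{\mathcal{E}}$, which is positive and invertible. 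The same argument gives $W^{(2)}_{0,0}=|T_2|\,|_{\mathcal{E}}\ge 0$.

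The step I expect to be the real obstacle is justifying $H_{01}=H_{10}=\{0\}$, i.e.\ that the wandering subspace $\mathcal{E}$ generates $\mathcal{H}$. If citing \cite[Theorem 3.3]{Mul} is not acceptable, I would try a direct argument. A vector in $H_{01}$ lies in $T_1^m\mathcal{H}$ for all $m$, and $T_2$ acts on $H_{01}$ as a pure $2$-isometry. One would then combine this with analyticity of the pair to force the vector to lie in $\bigcap T_1^mT_2^n\mathcal{H}=\{0\}$.
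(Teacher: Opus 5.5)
Your construction is correct and is essentially the paper's proof. Theorem~\ref{Orthogonality} gives the orthogonal decomposition into the closed pieces $T_1^mT_2^n(\ker\mathbf{T}^*)$, and these pieces are identified unitarily with $\ker\mathbf{T}^*$. The joint kernel condition then makes the level-one identifications send $T_jf$ to $|T_j|f$, so that $W^{(j)}_{0,0}=|T_j|\,|_{\ker\mathbf{T}^*}$; the paper's $V_{1,0}$ is exactly your $U_{1,0}^{*}$. The only difference is in the remaining unitaries. The paper picks $V_{m,n}$ arbitrarily, whereas your uniform polar choice makes the weight products telescope: $\mathcal{P}_{m,n}=U_{m,n}^*S_{m,n}=|S_{m,n}|\ge 0$. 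So your shift already lies in $\ell^2_{IWP}(\ker\mathbf{T}^*)$, which the paper only reaches afterwards through its normalization proposition.

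The step you flagged is a genuine issue, and the paper does not resolve it either: its proof sets $A_j=T_j|_{H_{11}}$ and so only models $\mathbf{T}|_{H_{11}}$. Your fallback direct argument cannot work, because the paper's notion of analyticity, $\bigcap_{m,n}T_1^mT_2^n\mathcal{H}=\{0\}$, only kills $H_{00}$. Consider $(M_{z_1}\oplus I,\;M_{z_2}\oplus M_z)$ on $H^2(\mathbb{D}^2)\oplus H^2(\mathbb{D})$.
\begin{itemize}
\item It is a pair of commuting isometries that doubly commute on each summand, hence it is left-inverse commuting.
\item It is a toral $2$-isometry, and it trivially satisfies the joint kernel condition.
\item It satisfies $\bigcap_{m,n}T_1^mT_2^n\mathcal{H}=\{0\}$.
\end{itemize}
Yet $\ker\mathbf{T}^*=\mathbb{C}\oplus\{0\}$ and $H_{11}=H^2(\mathbb{D}^2)\oplus\{0\}\neq\mathcal{H}$. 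Moreover $\bigcap_kT_1^k\mathcal{H}=\{0\}\oplus H^2(\mathbb{D})\neq\{0\}$, whereas every operator-valued shift on $\ell^2_{\mathbb{Z}_+^2}(\ker\mathbf{T}^*)$ has $\bigcap_kT_1^k(\cdot)=\{0\}$. So this pair is not unitarily equivalent to any such shift.

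Consequently, your citation of \cite[Theorem 3.3]{Mul} is legitimate only if ``analytic'' is read as analyticity of each $T_i$ separately. Under that hypothesis the missing step is one line. Since $H_{01}$ reduces $T_1$ and $T_1|_{H_{01}}$ is unitary, $H_{01}=T_1^kH_{01}\subseteq\bigcap_kT_1^k\mathcal{H}=\{0\}$. Symmetrically $H_{10}=\{0\}$, hence $\mathcal{H}=H_{11}$.
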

\begin{proof}
For each $(m,n) \in \mathbb{Z}_+^2,$
let $\mathcal{M}_{m,n} := T_1^m T_2^n \big( \ker T_1^* \cap \ker T_2^* \big).$
Then, note that $\mathcal{M}_{m,n} \neq \{0\}$ for all $(m,n) \in \mathbb{Z}_+^2$ and $\mathcal M_{0,0}=\ker \boldsymbol{T^*}$.
For each $(m,n) \in \mathbb{Z}_+^2,$ define
\[
\Lambda_{m,n}^{(1)} := T_1\big|_{\mathcal{M}_{m,n}} 
: \mathcal{M}_{m,n} \to \mathcal{M}_{m+1,n},
\]
\[
\Lambda_{m,n}^{(2)} := T_2\big|_{\mathcal{M}_{m,n}} 
: \mathcal{M}_{m,n} \to \mathcal{M}_{m,n+1}.
\]
The maps $\Lambda_{m,n}^{(1)}$ and $\Lambda_{m,n}^{(2)}$ are linear homeomorphisms. 
Hence, for every $(m,n) \in \mathbb{Z}_+^2$, the Hilbert spaces 
$\mathcal{M}_{m,n}$ and $\mathcal{M}_{0,0}$ are unitarily equivalent 
(see \cite[Problem 56]{HA}). For each $(m,n) \in \mathbb{Z}_+^2$, let $V_{m,n} : \mathcal{M}_{m,n} \to \mathcal{M}_{0,0}$ be a unitary isomorphism. By Theorem~\ref{Orthogonality},  we have
$H_{11} = \bigoplus_{m,n \geq 0} \mathcal{M}_{m,n}$. Define a unitary operator $V : H_{11} \longrightarrow \ell^2_{\mathbb{Z}_{+}^2}(\mathcal{M}_{0,0})$ by the rule
\[
V\!\left( \oplus_{m,n \geq 0} h_{m,n} \right)
=
\oplus_{m,n \geq 0} V_{m,n} h_{m,n}.
\]
Let $A_1 := T_1|_{H_{11}}$ and $A_2 := T_2|_{H_{11}}$. Then $(A_1, A_2)$ is a toral $2$-isometry. Moreover,
\[
V A_1 = S_1 V 
\quad \text{and} \quad 
V A_2 = S_2 V,
\]
where $(S_1, S_2) \in B\big(\ell^2_{\mathbb{Z}_{+}^2}(\ker \mathbf{T^*})\big)$
is an operator-valued weighted shift with weights
\[
W_{m,n}^{(1)} = V_{m+1,n}\,\Lambda_{m,n}^{(1)}\,V_{m,n}^{-1}, 
\quad
W_{m,n}^{(2)} = V_{m,n+1}\,\Lambda_{m,n}^{(2)}\,V_{m,n}^{-1}, \quad (m,n) \in \mathbb{Z}_+^2.
\]
In particular, define
\[
V_{1,0} : T_1(\ker \mathbf{T}^*) \to \ker \mathbf{T}^*, 
\quad V_{1,0}(T_1 f) = (T_1^* T_1)^{1/2} f,
\]
\[
V_{0,1} : T_2(\ker \mathbf{T}^*) \to \ker \mathbf{T}^*, 
\quad V_{0,1}(T_2 f) = (T_2^* T_2)^{1/2} f,
\]
for $f \in \ker \mathbf{T}^*$. Then both $V_{1,0}$ and $V_{0,1}$ are unitary operators. Consequently, the initial weights of the shift are $W_{0,0}^{(1)} = |T_1|$ and $W_{0,0}^{(2)} = |T_2|$ on $\ker \mathbf{T}^*.$ This completes the proof.
\end{proof}
\begin{definition}
Let $(T_1,T_2)$ be an operator-valued weighted shift with operator weights 
$\{W_{I}^{(j)}: j=1,2\}_{I\in \mathbb{Z}_{+}^2}$. 
For $m,n\in \mathbb Z_+$, define
\[
\mathcal{P}_{m,n}
=
W_{m-1,n}^{(1)}
W_{m-2,n}^{(1)}
\cdots
W_{0,n}^{(1)}
W_{0,n-1}^{(2)}
\cdots
W_{0,0}^{(2)},
\]
with the convention that $\mathcal{P}_{0,0}=I$.
\end{definition}
\begin{definition}
The class of all operator-valued weighted shifts 
$(T_1,T_2)$ on $\ell_{\mathbb{Z}_{+}^2}^{2}(\mathcal H)$ 
with invertible operator weights is denoted by 
$\ell_{IW}^{2}(\mathcal H)$.
Further, the subclass consisting of those shifts for which 
$\mathcal{P}_{m,n}$ is positive for every $(m,n)\in \mathbb{Z}_{+}^{2}$ 
is denoted by $\ell_{IWP}^{2}(\mathcal H)$.
\end{definition}
Before proving  Theorem \ref{Main theorem}, we record the following structural proposition for operator-valued multishifts with invertible weights. This result allows us to reduce the study of such multishifts to a canonical form on the class $\ell^{2}_{IWP}(\mathcal{H})$, which will be used repeatedly in the subsequent sections.
\begin{proposition}
 Suppose $(T_1,T_2)$ be a pair of operator-valued multishift with invertible operator weights $\{W_{I}^{(j)}:j=1,2\}_{I\in \mathbb{Z}_{+}^2}$ such that $W_{0,0}^{(1)}$ and $W_{0,0}^{(2)}$ are positive. Then $(T_1,T_2)$ is unitarily equivalent to operator-valued weighted shifts $(\tilde{T_1},\tilde{T_2)}$
 in the class of $\ell^{2}_{IWP}(\mathcal{H})$ with weights $\{\tilde{W}_{I}^{(j)}:j=1,2\}_{I\in \mathbb{Z}_{+}^2}$
  where as $\tilde W_{0,0}^{(1)}=W_{0,0}^{(1)}$ and $\tilde W_{0,0}^{(2)}=W_{0,0}^{(2)}$.
\end{proposition}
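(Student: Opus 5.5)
We will construct a unitary $U=\oplus_{(m,n)\in\mathbb{Z}_+^2}U_{m,n}$ on $\ell^2_{\mathbb{Z}_+^2}(\mathcal H)$, with each $U_{m,n}$ a unitary on $\mathcal H$, so that conjugating by $U$ replaces the weights by
\[
\tilde W^{(1)}_{m,n}=U_{m+1,n}W^{(1)}_{m,n}U_{m,n}^{*},\qquad \tilde W^{(2)}_{m,n}=U_{m,n+1}W^{(2)}_{m,n}U_{m,n}^{*}.
\]
Then $\tilde T_j:=UT_jU^*$ is again an operator-valued weighted shift. The weights remain invertible and uniformly bounded, and commutativity \eqref{Commutativity-weights} is preserved. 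The key quantity is $\mathcal{P}_{m,n}$. Since its factors telescope, we get $\tilde{\mathcal P}_{m,n}=U_{m,n}\mathcal P_{m,n}U_{0,0}^*$. We therefore take $U_{0,0}=I$, so that $\tilde{\mathcal P}_{m,n}=U_{m,n}\mathcal P_{m,n}$.

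The choice of $U_{m,n}$ comes from the polar decomposition. Each $\mathcal P_{m,n}$ is invertible, being a product of invertible weights, so $\mathcal P_{m,n}=V_{m,n}|\mathcal P_{m,n}|$ with $V_{m,n}$ unitary. We set $U_{m,n}:=V_{m,n}^*$ for $(m,n)\neq(0,0)$. Then $\tilde{\mathcal P}_{m,n}=|\mathcal P_{m,n}|\ge 0$ for all $(m,n)$, so $(\tilde T_1,\tilde T_2)\in\ell^2_{IWP}(\mathcal H)$.

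It remains to check the initial weights. We have $\mathcal P_{1,0}=W^{(1)}_{0,0}$, which is positive by hypothesis. Its polar factor is therefore $V_{1,0}=I$, so $U_{1,0}=I$, and hence $\tilde W^{(1)}_{0,0}=U_{1,0}W^{(1)}_{0,0}U_{0,0}^*=W^{(1)}_{0,0}$. In the same way $\mathcal P_{0,1}=W^{(2)}_{0,0}$ gives $U_{0,1}=I$ and $\tilde W^{(2)}_{0,0}=W^{(2)}_{0,0}$. This is exactly where positivity of the initial weights is needed.

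The main point to verify carefully is the telescoping identity $\tilde{\mathcal P}_{m,n}=U_{m,n}\mathcal P_{m,n}U_{0,0}^*$ along the path in the definition of $\mathcal P_{m,n}$, which goes first up the second coordinate and then along the first. This is routine, since consecutive inner factors $U^*_{\cdot}U_{\cdot}$ cancel. One should also confirm that $U$ intertwines correctly, i.e.\ $(UT_jU^*x)_{\alpha}=U_\alpha W^{(j)}_{\alpha-\varepsilon_j}U^*_{\alpha-\varepsilon_j}x_{\alpha-\varepsilon_j}$, which follows directly from the coordinatewise definition of $U$.
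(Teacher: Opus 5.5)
Your proposal is correct and is essentially the paper's proof. Both conjugate by the block-diagonal unitary built from the polar factors of $\mathcal P_{m,n}$, so that the weight products telescope to $|\mathcal P_{m,n}|$; the paper writes $U^*T_jU$ with $U_{m,n}$ the polar factor, while you write $UT_jU^*$ with its adjoint, which is the same construction. You also make explicit two points the paper leaves to ``simple calculation'': invertibility makes the polar factors unitary, and positivity of $\mathcal P_{1,0}=W^{(1)}_{0,0}$ and $\mathcal P_{0,1}=W^{(2)}_{0,0}$ forces $U_{1,0}=U_{0,1}=I$, which is what keeps the initial weights unchanged.
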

\begin{proof}
 For each $(m,n)\in\mathbb Z_+^2,$ let $\mathcal{P}_{m,n}=U_{m,n}|\mathcal{P}_{m,n}|$ be a polar decomposition of $\mathcal{P}_{m,n}$. 
 Define a unitary $U:\ell_{\mathbb{Z}_{+}^2}^2(\mathcal{H}) \to  \ell_{\mathbb{Z}_{+}^2}^2(\mathcal{H})$ as 
 $$U(e_{m,n} \otimes h)=e_{m,n}\otimes U_{m,n}h \quad h\in \mathcal H, m,n\in\mathbb Z_+.$$
 Define $\tilde{T_1}=U^*T_1U$ and $\tilde{T_2}=U^*T_2U$. Then, note that $h\in \mathcal H,$ and $m,n\in\mathbb Z_+,$
 \begin{align*}
\tilde{T_1}(e_{m,n} \otimes h)
&= U^* T_1 U (e_{m,n} \otimes h) \\
&= U^* T_1 (e_{m,n} \otimes U_{m,n} h) \\
&= U^* (e_{m+1,n} \otimes W_{m,n}^{(1)} U_{m,n} h) \\
&= e_{m+1,n} \otimes U_{m+1,n}^* W_{m,n}^{(1)} U_{m,n} h
\end{align*}
and \begin{align*}
\tilde{T_2}(e_{m,n} \otimes h)
&= U^* T_2 U (e_{m,n} \otimes h) \\
&= U^* T_2 (e_{m,n} \otimes U_{m,n} h) \\
&= U^* (e_{m,n+1} \otimes W_{m,n}^{(2)} U_{m,n} h) \\
&= e_{m,n+1} \otimes U_{m,n+1}^* W_{m,n}^{(2)} U_{m,n} h.
\end{align*}
Therefore $\tilde{W}_{m,n}^{(1)}=U_{m+1,n}^* W_{m,n}^{(1)} U_{m,n}$ and 
 $\tilde{W}_{m,n}^{(2)}=U_{m,n+1}^* W_{m,n}^{(2)} U_{m,n}.$
 With simple calculation, it can be shown that $\mathcal{P}_{m,n}^{'}=|\mathcal{P}_{m,n}|$ $\geq 0.$
\end{proof}
Here we derive a formula for the weights of toral $2$-isometric weighted shifts belonging to the class $\ell^2_{IPW}(\mathcal{H})$.
\begin{theorem}\label{Representation of weights}
  Let $(T_1,T_2)$ be a toral $2$-isometric weighted shifts with invertible weights $\{W_{I}^{(j)}:j=1,2\}_{I\in \mathbb{Z}_{+}^2}$ and with in the class of $\ell^2_{IWP}(\mathcal{H})$. Then, for each $(m,n)\in\mathbb Z_+^2,$
  \begin{eqnarray}\label{Formula for horizontal weights}
      W_{m,n}^{(1)} &=&
\sqrt{(m+1)A
+ nB + I}
\,
\left(
\sqrt{mA
+ nB+ I}
\right)^{-1}
  \end{eqnarray}
and 
\begin{eqnarray}\label{Formula for vertical weights}
    W_{m,n}^{(2)} =
\sqrt{(n+1)B
+ mA + I}
\,
\left(
\sqrt{mA
+ nB + I}
\right)^{-1},
\end{eqnarray}
where $ A=\big((W_{0,0}^{(1)})^2 - I\big)$ and $B=\big((W_{0,0}^{(2)})^2 - I\big).$
\end{theorem}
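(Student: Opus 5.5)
The plan is to identify $\mathcal{P}_{m,n}$ with the compression of $\mathbf{T}^{*(m,n)}\mathbf{T}^{(m,n)}$ to the $(0,0)$-th coordinate space. The toral $2$-isometry identity from Definition~\ref{Toral $2$ isometry} then computes $\mathcal{P}_{m,n}^2$, and positivity of $\mathcal{P}_{m,n}$ (the $\ell^2_{IWP}$ hypothesis) lets us take square roots. The weights are then recovered as quotients of consecutive $\mathcal{P}$'s.

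\textbf{Step 1: $\mathcal{P}_{m,n}$ is the path product.} By the definition of the shift, $T_2^n(e_{0,0}\otimes h)=e_{0,n}\otimes W^{(2)}_{0,n-1}\cdots W^{(2)}_{0,0}h$. Applying $T_1^m$ afterwards gives
\[
T_1^mT_2^n(e_{0,0}\otimes h)=e_{m,n}\otimes \mathcal{P}_{m,n}h .
\]
Each $\mathcal{P}_{m,n}$ is invertible, being a product of invertible weights. Since $T_1^m T_2^n$ maps $e_{0,0}\otimes\mathcal H$ into $e_{m,n}\otimes\mathcal H$, we get
\[
\langle T_2^{*n}T_1^{*m}T_1^mT_2^n(e_{0,0}\otimes h),\,e_{0,0}\otimes k\rangle=\langle \mathcal{P}_{m,n}^*\mathcal{P}_{m,n}h,k\rangle .
\]
The same computation with $m=1,n=0$ and with $m=0,n=1$ gives $T_1^*T_1=(W^{(1)}_{0,0})^2$ and $T_2^*T_2=(W^{(2)}_{0,0})^2$ on $e_{0,0}\otimes\mathcal H$. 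Here we use that $W^{(j)}_{0,0}$ is positive, because $W^{(1)}_{0,0}=\mathcal{P}_{1,0}$ and $W^{(2)}_{0,0}=\mathcal{P}_{0,1}$.

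\textbf{Step 2: compute $\mathcal{P}_{m,n}^2$.} The identity $T_1^{*m}T_2^{*n}T_2^nT_1^m=mT_1^*T_1+nT_2^*T_2-(m+n-1)I$ from Definition~\ref{Toral $2$ isometry} holds by commutativity. Compressing it to $e_{0,0}\otimes\mathcal H$ via Step 1 yields
\[
\mathcal{P}_{m,n}^2=\mathcal{P}_{m,n}^*\mathcal{P}_{m,n}=m(W^{(1)}_{0,0})^2+n(W^{(2)}_{0,0})^2-(m+n-1)I=mA+nB+I .
\]
Since $\mathcal{P}_{m,n}\ge 0$, uniqueness of the positive square root gives $\mathcal{P}_{m,n}=\sqrt{mA+nB+I}$. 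This operator is invertible, and in fact $A,B\ge 0$ because $T_j^*T_j\ge I$ for $2$-isometries by \cite[Lemma 1]{S.R}.

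\textbf{Step 3: recover the weights.} By definition $\mathcal{P}_{m+1,n}=W^{(1)}_{m,n}\mathcal{P}_{m,n}$, so
\[
W^{(1)}_{m,n}=\mathcal{P}_{m+1,n}\mathcal{P}_{m,n}^{-1},
\]
which is \eqref{Formula for horizontal weights}. For the vertical weights, commutativity of $T_1$ and $T_2$ gives
\[
T_2T_1^mT_2^n(e_{0,0}\otimes h)=T_1^mT_2^{n+1}(e_{0,0}\otimes h).
\]
The left side is $e_{m,n+1}\otimes W^{(2)}_{m,n}\mathcal{P}_{m,n}h$ and the right side is $e_{m,n+1}\otimes\mathcal{P}_{m,n+1}h$. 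Hence $W^{(2)}_{m,n}=\mathcal{P}_{m,n+1}\mathcal{P}_{m,n}^{-1}$, which is \eqref{Formula for vertical weights}.

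The only delicate point is Step 2. Its ingredients are the validity of the mixed-power identity for toral $2$-isometries and the fact that the Gram operator $\mathcal{P}_{m,n}^*\mathcal{P}_{m,n}$ is exactly this compression. The $\ell^2_{IWP}$ positivity assumption is precisely what removes the unitary ambiguity in the square root.
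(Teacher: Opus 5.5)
Your proof is correct and has the same structure as the paper's. You show $\mathcal{P}_{m,n}^*\mathcal{P}_{m,n}=mA+nB+I$ from the toral $2$-isometry relations, take the unique positive square root using the $\ell^2_{IWP}$ hypothesis, and recover the weights as $\mathcal{P}_{m+1,n}\mathcal{P}_{m,n}^{-1}$ and $\mathcal{P}_{m,n+1}\mathcal{P}_{m,n}^{-1}$. There are two minor differences. The paper derives the identity by solving the second-order difference equations for $\gamma^h_{m,n}=\|\mathcal{P}_{m,n}h\|^2$, whereas you compress the closed-form identity $T_1^{*m}T_2^{*n}T_2^nT_1^m=mT_1^*T_1+nT_2^*T_2-(m+n-1)I$. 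Also, your commutativity argument for the vertical quotient fills in a step that the paper only asserts.
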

\begin{proof}
Let $(m,n)\in\mathbb Z_+^2$ and  $h\in \mathcal{H}$ with $\|h\|=1.$ 
Define $\gamma_{m,n}^h:=\|T_1^mT_2^n(e_{0,0}\otimes h)\|^2.$ 
Note that  $\gamma_{m,n}^h=\|\mathcal{P}_{m,n}h\|^2$. Let $f=T_1^{m}T_2^n(e_{0,0}\otimes h)$. Then by the definition of toral $2$-isometry $\|T_i^2f\|^2-2\|T_if\|^2+\|f\|^2=0,$ for $i=1,2$ (see \ref{Toral $2$ isometry}), we have  
\begin{eqnarray}\label{For $T_1$}
\gamma_{m+2,n}^h - 2\gamma_{m+1,n}^h + \gamma_{m,n}^h = 0.
\end{eqnarray}
\begin{eqnarray}\label{For $T_2$}
\gamma_{m,n+2}^h - 2\gamma_{m,n+1}^h + \gamma_{m,n}^h = 0.
\end{eqnarray}
 Again by the mixed condition for toral $2$ isometry $\|f\|^2-\|T_1f\|^2-\|T_2f\|^2+\|T_1T_2f\|^2=0,$ we have 
\begin{eqnarray}\label{For $T_1,T_2$}
\gamma_{m+1,n+1}^h - \gamma_{m,n+1}^h-\gamma_{m+1,n}^h + \gamma_{m,n}^h = 0.
\end{eqnarray}
From \eqref{For $T_1$}, $\gamma_{m,n}^h$ is an arithmetic progression in $m$, hence
\[
\gamma_{m,n}^h = a_n + m b_n
\]
where, 
$a_n=\gamma_{0,n}^h$ and $b_n=\gamma_{1,n}^h-\gamma_{0,n}^h.$
Plugging the value of $\gamma_{m,n}^h$ in \eqref{For $T_2$}, we have 
\[
(a_{n+2} + m b_{n+2}) - 2(a_{n+1} + m b_{n+1}) + (a_{n} + m b_{n}) = 0.
\]
Rewriting the above equation, we get 
\[
(a_{n+2} - 2a_{n+1} + a_{n}) + m(b_{n+2} - 2b_{n+1} + b_{n}) = 0.
\]
Thus, $a_{n+2} - 2a_{n+1} + a_{n} = 0,\quad
b_{n+2} - 2b_{n+1} + b_{n} = 0.$
Hence, we get 
$a_n = \alpha + \delta n,\ 
b_n = \beta + \epsilon n,$
where $\alpha=a_0$, $\delta=a_1-a_0$ and $\beta=b_0$, $\epsilon=b_1-b_0.$
Hence,  $$\gamma_{m,n}^h = \alpha + \beta m + \delta n + \epsilon mn.$$
Substitute these values $\gamma_{m+1,n+1}^h$, $\gamma_{m,n+1}^h$, $\gamma_{m+1,n}^h $ and $\gamma_{m,n}^h$ in \eqref{For $T_1,T_2$}, we have $\epsilon=0$ and therefore $\gamma_{m,n}^h = \alpha + \beta m + \delta n$.
Since $\gamma_{0,0}^h=1,$ it follows that  $\gamma_{m,n}^h = 1 + \beta m + \delta n$,
for all $(m,n)\in \mathbb{Z}_+^2.$ 
After substituting $m=1,n=0$ and $m=0,n=1$  in the above equation and for all $h\in \mathcal{H}$ we have that 
\begin{eqnarray}\label{Formula for P-mn}
  \mathcal{P}_{m,n}^2={m\big((W_{0,0}^{(1)})^2 - I\big)
+ n\big((W_{0,0}^{(2)})^2 - I\big) + I}, \quad m,n \in \mathbb{Z}_{+}. 
\end{eqnarray}
 Since $W_{m,n}^{(1)} = \mathcal{P}_{m+1,n}\,(\mathcal{P}_{m,n})^{-1}$ and $W_{m,n}^{(2)} = \mathcal{P}_{m,n+1}\,(\mathcal{P}_{m,n})^{-1},$ 
substituting the value of $\mathcal{P}_{m,n}$ from \eqref{Formula for P-mn}, we have the representations \eqref{Formula for horizontal weights} and \eqref{Formula for vertical weights}. 
\end{proof}
\begin{remark}
For an operator-valued $2$-variable weighted shift $\mathbf{T}=(T_1,T_2)$ with weights $\{W_{I}^{(j)}:j=1,2\}_{I\in\mathbb Z_+^2}$ in $\mathcal{H},$  we set 
\begin{eqnarray}\label{Moment weights product}
\mathcal{T}_{[{(m,n),(i,j)}]} {=}
\begin{cases}
W^{(1)}_{m-1,n} \cdots W^{(1)}_{i,n} \, W^{(2)}_{i,n-1} \cdots W^{(2)}_{i,j} 
& \text{if } m \ge i,\; n \ge j. \\
I 
& \text{if } m = i,\; n = j.
\end{cases}
\end{eqnarray}
For $\beta=(\beta_1,\beta_2)\in \mathbb Z_+^2$ with $|\beta|=\beta_1+\beta_2=k$, define
\begin{eqnarray}\label{Toral $k$-isometry}
\Delta_{k,s,t}^{\mathrm{toral}}(f)
=
\sum_{\alpha \leq \beta}
(-1)^{|\alpha|}
\binom{\beta}{\alpha}
\left\|
\mathcal{T}_{[{(s+\alpha_1,\; t+\alpha_2),(s,t)}]}f
\right\|^2,
\end{eqnarray}
for all $s,t\geq 0$, $f\in \mathcal H$, and $\binom{\beta}{\alpha}
=
\prod_{i=1}^{2}
\binom{\beta_i}{\alpha_i},$
where $\alpha=(\alpha_1,\alpha_2),$ and $\beta=(\beta_1,\beta_2).$
The notation used above is adopted from \cite[Remark 2.4]{J.Z}.
\end{remark}
The following proposition provides an equivalent characterization of toral $k$-isometries for $2$-variable operator-valued weighted shift.
\begin{proposition}\label{Weighted toral 2-isometry}
Let $\mathbf{T}=(T_1,T_2)$ be an operator-valued multishift. Then $\mathbf{T}$ is a toral $k$-isometry if and only if
\[
\Delta_{k,s,t}^{\mathrm{toral}}(f)=0
\]
for all $s,t\geq 0$ and for all $f\in \mathcal H$.
\end{proposition}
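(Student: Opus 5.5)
We must prove Proposition \ref{Weighted toral 2-isometry}: $\mathbf{T}$ is a toral $k$-isometry if and only if $\Delta_{k,s,t}^{\mathrm{toral}}(f)=0$ for all $s,t\ge 0$, all $f\in\mathcal H$, and (implicitly) all $\beta$ with $|\beta|=k$.

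Since $B_\beta(\mathbf T)$ is selfadjoint, it vanishes if and only if its quadratic form $\langle B_\beta(\mathbf T)x,x\rangle$ vanishes for all $x\in\ell^2_{\mathbb Z_+^2}(\mathcal H)$. So the plan is to compute this quadratic form on the orthogonal decomposition $\bigoplus_{(s,t)} e_{s,t}\otimes\mathcal H$.

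The first step is a formula for the action of monomials. For $f\in\mathcal H$ and $\alpha\in\mathbb Z_+^2$, commutativity \eqref{Commutativity-weights} allows the monomial $\mathbf T^\alpha$ to be applied by first moving horizontally and then vertically, or in any order. This gives
\[
\mathbf T^{\alpha}(e_{s,t}\otimes f)=e_{s+\alpha_1,t+\alpha_2}\otimes \mathcal{T}_{[(s+\alpha_1,t+\alpha_2),(s,t)]}f .
\]
Here we read the product in \eqref{Moment weights product} along any monotone lattice path; commutativity makes it path independent. Consequently
\[
\langle \mathbf T^{*\alpha}\mathbf T^\alpha(e_{s,t}\otimes f),e_{s,t}\otimes f\rangle=\|\mathcal{T}_{[(s+\alpha_1,t+\alpha_2),(s,t)]}f\|^2 ,
\]
and summing with the binomial coefficients gives
\[
\langle B_\beta(\mathbf T)(e_{s,t}\otimes f),e_{s,t}\otimes f\rangle=\Delta^{\mathrm{toral}}_{k,s,t}(f).
\]
This already yields necessity: if $B_\beta(\mathbf T)=0$, every $\Delta^{\mathrm{toral}}_{k,s,t}(f)$ vanishes.

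The second step, for sufficiency, is to show that each $\mathbf T^{*\alpha}\mathbf T^\alpha$, and hence $B_\beta(\mathbf T)$, is block diagonal with respect to the decomposition. Indeed $\mathbf T^\alpha$ maps $e_{s,t}\otimes\mathcal H$ into $e_{s+\alpha_1,t+\alpha_2}\otimes\mathcal H$. These target subspaces are mutually orthogonal for distinct $(s,t)$, because the shift by the fixed $\alpha$ is injective on indices. Therefore, for $x=\oplus x_{s,t}$,
\[
\langle B_\beta(\mathbf T)x,x\rangle=\sum_{s,t}\Delta^{\mathrm{toral}}_{k,s,t}(x_{s,t}).
\]
Rather than expanding the full quadratic form, one can argue more cleanly as follows. $B_\beta$ is block diagonal with selfadjoint diagonal blocks $D_{s,t}$, and $\langle D_{s,t}f,f\rangle=\Delta^{\mathrm{toral}}_{k,s,t}(f)$. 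If this form is $0$ for all $f$, polarization gives $D_{s,t}=0$, hence $B_\beta(\mathbf T)=0$. Boundedness of the weights, condition \eqref{Uniformly Bounded Weights}, justifies the interchange of sums.

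The main obstacle is only bookkeeping: verifying that the iterated product of weights produced by $\mathbf T^\alpha$ coincides with the ordered product $\mathcal{T}_{[\cdot,\cdot]}$ in \eqref{Moment weights product}. I would settle this by induction on $|\alpha|$, using \eqref{Commutativity-weights} to move each $W^{(1)}$ factor past the $W^{(2)}$ factors into the prescribed order.
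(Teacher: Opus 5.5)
Your proof is correct and follows essentially the route the paper intends. The paper gives no argument of its own and only defers to Jabłoński's one-variable proofs, which amount to the same observation: each $\mathbf{T}^{*\alpha}\mathbf{T}^{\alpha}$ is block diagonal with respect to $\bigoplus_{s,t} e_{s,t}\otimes\mathcal H$, with diagonal blocks $\mathcal{T}_{[(s+\alpha_1,t+\alpha_2),(s,t)]}^*\mathcal{T}_{[(s+\alpha_1,t+\alpha_2),(s,t)]}$. The path-independence induction you single out as the main obstacle is not actually needed. Applied to $e_{s,t}\otimes f$, the monomial $T_1^{\alpha_1}T_2^{\alpha_2}$ performs the vertical steps first and then the horizontal ones, which is exactly the ordering in \eqref{Moment weights product}.
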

\begin{proof}
 The proof is very much similar to \cite[Proposition 2.5 (i)]{J.Z} and \cite[Proposition 1.3]{Z.J}.   
\end{proof}
We conclude this section by describing cyclic analytic toral 2-isometry satisfying the joint kernel condition. Namely by \cite[Theorem 2.4]{S.B}, a cyclic analytic  toral 2-isometry $\mathbf{T}=(T_1,T_2)$ is unitarily equivalent to the tuple of operators $(M_{z_1},M_{z_2})$ on a Dirichlet-type space $\mathcal{D}(\mu_1,\mu_2)$ for some finite Borel measures $\mu_1,\mu_2$ on the unit circle $\mathbb{T}=\{z\in \mathbb{C}:|z|=1\}$ if and only if $\ker\mathbf{T}^*$ has wandering subspace property. The Hilbert space $\mathcal{D}(\mu_1,\mu_2)$, as introduced in \cite{S.B}, consists of all analytic functions $f$ on the bidisc $\mathbb{D}^2=\{(z_1,z_2):|z_1|<1,|z_2|<1 \}$ such that $\mathcal{D}_{\mu_1,\mu_2}(f) <\infty,$  where 
\[
\begin{aligned}
\mathcal{D}_{\mu_1,\mu_2}(f)
&= \sup_{0<r<1} \int_{\mathbb{T}} \int_{\mathbb{D}} 
\left|\partial_1 f(z_1, re^{i\theta})\right|^2 
\, P_{\mu_1}(z_1)\, dA(z_1)\, d\theta \\
&\quad + \sup_{0<r<1} \int_{\mathbb{T}} \int_{\mathbb{D}} 
\left|\partial_2 f(re^{i\theta}, z_2)\right|^2 
\, P_{\mu_2}(z_2)\, dA(z_2)\, d\theta,
\end{aligned}
\]  
the symbols $\partial_1 f$ and $\partial_2f$ stand for the partial derivative of $f$ with respect to $z_1$ and $z_2$ respectively, $dA$ denotes the normalized Lebesgue area measure on $\mathbb{D}$ and for $i=1,2$ the function $P_{\mu_{i}}$ is the positive harmonic function on $\mathbb{D}$ defined by
\[P_{\mu_{i}}(z_{i})=\frac{1}{2\pi}\int_{[0,2\pi)} \frac{1-|z_{i}|^2}{|e^{i\theta}-z_{i}|^2}d\mu_{i}(\theta). \]  
The inner product $\langle \cdot, \cdot\rangle_{\mu_1,\mu_2}$ of $\mathcal{D}(\mu_1,\mu_2)$ is given by 
\[
\begin{aligned}
\langle f,g\rangle_{\mu_1,\mu_2}
&= \langle f,g\rangle_{H^2(\mathbb{D}^2)} \\
&\quad + \sup_{0<r<1} \int_{\mathbb{T}} \int_{\mathbb{D}} 
\partial_1 f(z_1, re^{i\theta}) \overline{\partial_1 g(z_1, re^{i\theta})} 
\, P_{\mu_1}(z_1)\, dA(z_1)\, d\theta \\
&\quad + \sup_{0<r<1} \int_{\mathbb{T}} \int_{\mathbb{D}} 
\partial_2 f(re^{i\theta}, z_2) \overline{\partial_2 g(re^{i\theta}, z_2)} 
\, P_{\mu_2}(z_2)\, dA(z_2)\, d\theta.
\end{aligned}
\]
\begin{proposition}\label{Example}
 Under the above assumption $\mathbf{M}_{z}=(M_{z_1},M_{z_2})$ on $\mathcal{D}(\mu_1,\mu_2)$ satisfies the joint kernel condition if and only if $\mu_1=\alpha_1m$  and $\mu_2=\alpha_2m$  for some $\alpha_1,\alpha_2 \in \mathbb{R}_{+},$ where $m$ is the Lebesgue measure on $[0,2\pi).$
\end{proposition}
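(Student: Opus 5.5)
The plan is to reduce the joint kernel condition to a statement about Fourier coefficients of $\mu_1,\mu_2$, and then use the fact that a positive measure on $\mathbb{T}$ whose nonzero Fourier coefficients all vanish is a multiple of Lebesgue measure.

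First I will identify $\ker \mathbf{M}_z^*$. In $\mathcal{D}(\mu_1,\mu_2)$ the constant function $1$ has vanishing partial derivatives, so both Dirichlet-type terms of $\langle 1, z_1^mz_2^n\rangle_{\mu_1,\mu_2}$ vanish. The $H^2(\mathbb{D}^2)$ term vanishes for $(m,n)\neq(0,0)$, so $1\perp z_1^mz_2^n$ for all such $(m,n)$. Polynomials are dense in $\mathcal{D}(\mu_1,\mu_2)$, and the range $M_{z_1}\mathcal{D}+M_{z_2}\mathcal{D}$ has closure equal to the closed span of the nonconstant monomials. Since $\ker M_{z_1}^*\cap\ker M_{z_2}^*$ is the orthogonal complement of this closure, it equals $\mathbb{C}\cdot 1$ (this is also consistent with the wandering subspace property invoked from \cite[Theorem 2.4]{S.B}).

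Next I will reformulate the condition. Since $\ker\mathbf{M}_z^*$ is one-dimensional, the joint kernel condition says $M_{z_i}^*M_{z_i}1=c_i\cdot 1$ for $i=1,2$. By density of the monomials, this is equivalent to
\[
\langle z_i, z_i z_1^{m}z_2^{n}\rangle_{\mu_1,\mu_2}=\langle M_{z_i}1, M_{z_i}z_1^mz_2^n\rangle_{\mu_1,\mu_2}=0\quad\text{for all }(m,n)\neq(0,0).
\]

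The main step is computing these inner products; I take $i=1$, and $i=2$ is symmetric. The $H^2(\mathbb{D}^2)$ part is $0$ for $(m,n)\neq(0,0)$. The $\partial_2$ part is $0$ because $\partial_2 z_1=0$. In the $\partial_1$ part, the integrand is
\[
1\cdot\overline{(m+1)z_1^{m}(re^{i\theta})^{n}}\,P_{\mu_1}(z_1).
\]
Integrating in $\theta$ kills it whenever $n\neq 0$. When $n=0$ it reduces to a one-variable local Dirichlet pairing,
\[
(m+1)\int_{\mathbb{D}}\bar z^{\,m}P_{\mu_1}(z)\,dA(z),
\]
which by the Poisson expansion $P_{\mu_1}(re^{it})=\sum_k\hat\mu_1(k)r^{|k|}e^{ikt}$ equals a nonzero constant (depending only on $m$) times $\hat\mu_1(m)$. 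This is the familiar Richter–Sundberg formula $D_\mu(z^j,z^k)=\min(j,k)\,\hat\mu(j-k)$, up to normalization and conjugation conventions.

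It follows that the joint kernel condition holds if and only if $\hat\mu_1(m)=0$ and $\hat\mu_2(m)=0$ for all $m\ge 1$. Since the $\mu_i$ are positive, $\hat\mu_i(-m)=\overline{\hat\mu_i(m)}$, so all nonzero Fourier coefficients vanish. By uniqueness of Fourier–Stieltjes coefficients this gives $\mu_i=\alpha_i m$ with $\alpha_i=\hat\mu_i(0)/(2\pi)\ge0$, normalized according to the convention for $m$. Conversely, if $\mu_i=\alpha_i m$, the same computation shows all the relevant inner products vanish, so the condition holds.

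I expect the only real obstacle to be handling the $\sup_{0<r<1}$ in the definition: for polynomials the $\theta$-integral is independent of $r$ or increasing in $r$, so the supremum can be taken as a limit and the computation above applies to polynomials.
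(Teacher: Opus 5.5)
Your proposal is correct, and it takes a genuinely different route from the paper's.

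The paper handles the two directions separately and structurally. When $\mu_i=\alpha_i m$, it uses $P_{\mu_i}\equiv\alpha_i$ to write down the orthonormal basis $z_1^mz_2^n/\sqrt{1+m\alpha_1+n\alpha_2}$. This makes $\mathbf{M}_z$ a scalar $2$-variable weighted shift with weights $\zeta^{(j)}_{m,n}(\lambda_1,\lambda_2)$, and the joint kernel condition is read off directly. For the converse, it combines $\dim\ker\mathbf{M}_z^*=1$ with Theorem~\ref{Model} and Theorem~\ref{Representation of weights}. These force $\mathbf{M}_z$ to be unitarily equivalent to that same shift, i.e.\ to $\mathbf{M}_z$ on $\mathcal{D}(\alpha_1 m,\alpha_2 m)$. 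It then invokes the uniqueness result \cite[Proposition 6.4]{S.B} to conclude $\mu_i=\alpha_i m$.

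You bypass all of this. After identifying $\ker\mathbf{M}_z^*=\mathbb{C}\cdot 1$, you turn the condition into the vanishing of $\langle z_1,z_1^{m+1}z_2^n\rangle$ and $\langle z_2,z_1^mz_2^{n+1}\rangle$ for $(m,n)\neq(0,0)$. The local Dirichlet formula converts these into the coefficients $\hat\mu_1(m)$ and $\hat\mu_2(n)$ with $m,n\ge 1$. Fourier--Stieltjes uniqueness then gives both directions in one computation.

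What your route buys is self-containedness. It needs only density of polynomials. It does not use the model theorem, whose application here also requires knowing that $\mathbf{M}_z$ on $\mathcal{D}(\mu_1,\mu_2)$ is left-inverse commuting, a point the paper leaves implicit. It also avoids the weight formula and the external uniqueness theorem; your Fourier computation is essentially a direct proof of the part of that uniqueness that is actually needed. It also shows the two conditions decouple: $M_{z_1}^*M_{z_1}(\ker\mathbf{M}_z^*)\subseteq\ker\mathbf{M}_z^*$ alone is equivalent to $\mu_1$ being a multiple of $m$.

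The paper's route, in exchange, places the example inside its general model theory and produces the explicit shift weights of $\mathcal{D}(\alpha_1m,\alpha_2m)$.
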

\begin{proof}
 Suppose  $\mu_1=\alpha_1m$  and $\mu_2=\alpha_2m.$ Using the Poisson integral formula, we have $P_{\mu_1}(re^{i\phi})=\alpha_1$ and $P_{\mu_2}(re^{i\psi})=\alpha_2,$ 
 where $\phi, \psi \in \mathbb{R}$, $r\in [0,1)$. Using the inner product on $\mathcal{D}(\mu_1,\mu_2),$ the bisequence $\{e_{m,n}\}_{m,n=0}^{\infty}$ define by $e_{m,n}(z_1,z_2)=\frac{1}{\sqrt{1+m\alpha_1+n\alpha_2}}z_1^mz_2^n$ is an orthonormal basis of $\mathcal{D}(\mu_1,\mu_2)$. Since \[M_{z_1}e_{m,n}=\sqrt{\frac{1+(m+1)\alpha_1+n\alpha_2}{1+m\alpha_1+n\alpha_2}}e_{m+1,n},\]
 \[M_{z_2}e_{m,n}=\sqrt{\frac{1+(n+1)\alpha_2+m\alpha_1}{1+m\alpha_1+n\alpha_2}}e_{m,n+1}\]
for all $m,n \in \mathbb{Z}_{+}$, we deduce that $\mathbf{M}_{z}=(M_{z_1},M_{z_2})$
is unitarily equivalent to the two variable weighted shifts with weighted sequences $\{\zeta_{m,n}^{(j)}(\lambda_1,\lambda_2):j=1,2\}_{m,n=0}^{\infty}$,  where $\lambda_1=\sqrt{1+\alpha_1}$, $\lambda_2=\sqrt{1+\alpha_2}$ and 
\[\zeta_{m,n}^{(1)}(\lambda_1,\lambda_2)=\sqrt{\frac{1+(m+1)\alpha_1+n\alpha_2}{1+m\alpha_1+n\alpha_2}}\]
\[\zeta_{m,n}^{(2)}(\lambda_1,\lambda_2)=\sqrt{\frac{1+(n+1)\alpha_2+m\alpha_1}{1+m\alpha_1+n\alpha_2}}.\]
As a consequence, $(M_{z_1},M_{z_2})$ satisfies the joint kernel condition.
Conversely, Suppose that $(M_{z_1},M_{z_2})$ on $\mathcal{D}(\mu_1,\mu_2)$ satisfies the joint kernel conditions, where $\mu_1,\mu_2$ are finite positive Borel measures on $[0,2\pi)$.
Since $(M_{z_1},M_{z_2})$ is a pair of cyclic analytic toral $2$-isometries such that $\dim(\ker \mathbf{M}_z^*)=1$ (see \cite[Corollary ,3.9]{S.B}), it follows by Theorem $\ref{Representation of weights}$ that  $(M_{z_1},M_{z_2})$ is unitarily equivalent to a toral $2$-isometric $2$-variable weighted shift say $(S_1,S_2)$ with weights $\{\zeta_{m,n}^{(1)}(\lambda_1,\lambda_2)\}_{m,n=0}^{\infty}$ and $\{\zeta_{m,n}^{(2)}(\lambda_1,\lambda_2)\}_{m,n=0}^{\infty}$ . In view of the previous paragraph, $(S_1,S_2)$ is unitarily equivalent to the operator tuple $(M_{z_1,\alpha_1m},M_{z_2,\alpha_2m})$ of multiplication tuple  by the coordinate function $z_1,z_2$ on $\mathcal{D}(\alpha_1m,\alpha_2m)$ where $\alpha_1=\lambda_1^2 -1$, $\alpha_2=\lambda_2^2 -1$  $\in \mathbb{R}_{+}.$ Applying \cite[Proposition 6.4]{S.B}, we conclude that $\mu_1=\alpha_1m$ and $\mu_2=\alpha_2m$.
\end{proof}
Define $\Pi_{0,0} : \mathcal{H} \to \ell^2(\mathbb{Z}_+^2)\otimes \mathcal{H}$ by
$\Pi_{0,0}(h) = e_{0,0} \otimes h$ for any $h \in \mathcal{H}.$
This map places the vector $h$ at the coordinate $(0,0)$ and zero elsewhere, that is,
\[
\Pi_{0,0}(h)(m,n) =
\begin{cases}
h & \text{if } (m,n) = (0,0), \\
0 & \text{otherwise}.
\end{cases}
\]
Note that $\Pi_{0,0}$ is an isometry. 
More generally, for $(m,n) \in \mathbb{Z}_+^2$ and $h\in\mathcal H$, define $\Pi_{m,n}(h) = e_{m,n} \otimes h.$
Note that
\[
\langle \Pi_{m,n}h, \Pi_{p,q}k \rangle =
\begin{cases}
\langle h,k \rangle & \text{if } (m,n) = (p,q), \\
0 & \text{otherwise}.
\end{cases}
\]
We now discuss the connection between the subnormality of operator-valued multishifts and Stieltjes moment sequences.
\begin{theorem}\label{Subnormality criteria}
  Let $\mathbf{T}=(T_1,T_2)$ be $2$-variable operator-valued weighted shifts with weights $\{W_{I}^{(j)}:j=1,2\}_{I\in \mathbb{Z}_{+}^2}$ in $\mathcal{H}$ such that $\ker({W_I^{(j)}}^*)=\{0\}$ for $j=1,2$ and for all $I\in \mathbb{Z}_{+}^2.$ Then $(T_1,T_2)$ is jointly subnormal if and only if $\{\|\mathcal{T}_{[(m,n),(0,0)]}f\|^2\}$ is a Stieltjes moment sequence for all $f\in \mathcal{H}$.
\end{theorem}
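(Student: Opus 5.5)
The plan is to prove both directions through Theorem~\ref{Subnormality via by Stieltjes moment} (the Lubin criterion), after computing $\mathbf{T}^{*J}\mathbf{T}^J$ explicitly for the weighted shift. The notion of a Stieltjes moment sequence is taken in the two-variable sense: a bisequence $\{\gamma_{m,n}\}$ is a Stieltjes moment sequence when $\gamma_{m,n}=\int_{\mathbb{R}_+^2}s^mt^n\,d\mu(s,t)$ for some positive Borel measure $\mu$ on $\mathbb{R}_+^2$. Boundedness of the weights forces any such $\mu$ to have compact support.

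\textbf{Computing the moments.} For $J=(m,n)$ and $x=\oplus_{(p,q)}x_{p,q}$, commutativity of the weights \eqref{Commutativity-weights} shows that $T_1^mT_2^n$ maps $\mathcal{H}_{p,q}$ into $\mathcal{H}_{p+m,q+n}$ by the product $\mathcal{T}_{[(p+m,q+n),(p,q)]}$. Different $(p,q)$ land in orthogonal summands, so $\mathbf{T}^{*J}\mathbf{T}^J$ is block diagonal with blocks
\[
\mathcal{T}_{[(p+m,q+n),(p,q)]}^*\,\mathcal{T}_{[(p+m,q+n),(p,q)]}.
\]
By the path-independence of weight products, this block equals $\mathcal{T}_{[(p,q),(0,0)]}^{*-1}\,\mathcal{T}_{[(p+m,q+n),(0,0)]}^*\mathcal{T}_{[(p+m,q+n),(0,0)]}\,\mathcal{T}_{[(p,q),(0,0)]}^{-1}$ on the range of $\mathcal{T}_{[(p,q),(0,0)]}$. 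That range is dense, because $\ker W^*=\{0\}$ and the weights are bounded. This step, reducing everything to moments based at $(0,0)$, is the central point.

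\textbf{Necessity.} If $\mathbf{T}$ is jointly subnormal, Theorem~\ref{Subnormality via by Stieltjes moment} gives $\mathbf{T}^{*J}\mathbf{T}^J=\int t^{2J}\,d\rho$. Compressing to $e_{0,0}\otimes f$ gives
\[
\|\mathcal{T}_{[(m,n),(0,0)]}f\|^2=\int s^{2m}t^{2n}\,d\langle\rho(\cdot)\Pi_{0,0}f,\Pi_{0,0}f\rangle .
\]
After the change of variables $(s,t)\mapsto(s^2,t^2)$ this is a Stieltjes moment sequence.

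\textbf{Sufficiency.} Fix $g=\mathcal{T}_{[(p,q),(0,0)]}f$ in the dense range. Then $\|\mathcal{T}_{[(p+m,q+n),(p,q)]}g\|^2=\|\mathcal{T}_{[(p+m,q+n),(0,0)]}f\|^2$, which is the shifted sequence $\gamma^f_{p+m,q+n}=\int s^{p+m}t^{q+n}\,d\mu_f$. Hence it is represented by $s^pt^q\,d\mu_f$, so it is again Stieltjes.

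Next, pass from the dense set to all $g$. The representing measures have supports in a fixed rectangle $[0,\|T_1\|^2]\times[0,\|T_2\|^2]$, since $\gamma_{m,0}\le\|T_1\|^{2m}\|f\|^2$ and similarly in the second variable. Their moments depend continuously on $g$, so weak-$*$ compactness together with determinacy on compact sets gives representing measures for every $g$.

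Polarization then yields, for each $(p,q)$, a positive operator-valued measure $\rho_{p,q}$ on $\mathcal{H}_{p,q}$. The key points are positivity and finite additivity via the Riesz representation of the sesquilinear forms $g\mapsto\mu_g$, which are well defined by uniqueness of measures on compact rectangles. Setting $\rho=\oplus\rho_{p,q}$ gives a POVM with $\mathbf{T}^{*J}\mathbf{T}^J=\int t^{J}\,d\rho$. After the square-root change of variables this has the form required in Theorem~\ref{Subnormality via by Stieltjes moment}, which gives joint subnormality.

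\textbf{Main obstacle.} I expect the hardest step to be the density and extension argument in the sufficiency direction: making the passage from $g$ in the range of $\mathcal{T}_{[(p,q),(0,0)]}$ to general $g$ rigorous, and assembling the scalar measures into a POVM. Uniqueness of compactly supported moment measures is what makes both work.
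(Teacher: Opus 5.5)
Your proposal is correct, but it takes a different route from the paper.

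\textbf{Necessity.} The paper argues directly from the normal extension. It takes the joint spectral measure $E$ of $(N_1,N_2)$ and pushes $\langle E(\cdot)\Pi_{0,0}f,\Pi_{0,0}f\rangle$ forward under $(s,t)\mapsto(|s|^2,|t|^2)$, so Lubin's theorem is not needed there. Your use of Theorem~\ref{Subnormality via by Stieltjes moment} instead is also fine.

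\textbf{Sufficiency.} The paper never constructs a positive operator-valued measure. It invokes Stochel's criterion \cite[Theorem 3.1]{St} with the generating subspace $X=\Pi_{0,0}\mathcal{H}$. It checks only that $(\alpha,\beta)\mapsto\langle\mathbf{T}^{\alpha}h,\mathbf{T}^{\beta}h\rangle$ is positive definite on $\mathfrak{R}_2$ for $h\in X$, using orthogonality of distinct levels and an explicit integral in angular variables. It uses $\ker W^*=\{0\}$ only to show that $\bigvee_{m,n}T_1^mT_2^n\Pi_{0,0}\mathcal{H}$ is the whole space.

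Your route goes through the Embry--Lubin criterion, so you must produce the measure on every level. You transport the data from $(0,0)$ to $(p,q)$ by path-independence, obtaining the representing measure $s^pt^q\,d\mu_f$. You extend from the dense range of $\mathcal{T}_{[(p,q),(0,0)]}$ by weak-$*$ compactness on the fixed rectangle. You then polarize blockwise, using uniqueness of compactly supported representing measures, and take the direct sum. This is longer, but it is self-contained modulo Lubin's theorem and shows exactly where density enters. The paper's version is shorter because the extension from a generating set is absorbed into Stochel's theorem.

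\textbf{One correction.} Your conjugation formula involving $\mathcal{T}_{[(p,q),(0,0)]}^{-1}$ and $\mathcal{T}_{[(p,q),(0,0)]}^{*-1}$ is not meaningful in general. The hypothesis $\ker W^*=\{0\}$ gives dense range, not injectivity. What you actually use is the identity
\[
\|\mathcal{T}_{[(p+m,q+n),(p,q)]}\mathcal{T}_{[(p,q),(0,0)]}f\|^2=\|\mathcal{T}_{[(p+m,q+n),(0,0)]}f\|^2 ,
\]
which holds without inverses, so state that instead.
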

\begin{proof}
Let  $\mathbf{T} = (T_1, T_2)$  be jointly subnormal. Then there exist a Hilbert space  
$K \supseteq \ell^2(\mathbb{N}^2)\otimes \mathcal{H}$  and commuting normal operators  $N_1, N_2$ on $K$ with $\mathcal{H}$ is invarient under each of $N_j$ such that 
\[
T_i = N_i\big|_{\ell^2(\mathbb{N}^2)\otimes \mathcal{H}}  \quad i = 1,2.
\]
Since $N_1$ and $N_2$ are commuting normal operators, by the joint spectral theorem there exists 
a spectral measure $E(\cdot)$ on $\mathbb{C}^2$ such that for each $m,n\in\mathbb Z_+,$
\[
N_1^m N_2^n = \int s^m t^n \, dE(s,t).
\]
Let $f \in \mathcal{H}$. Then $T_1^m T_2^n (e_{(0,0)} \otimes f) = e_{(m,n)} \otimes \mathcal{T}_{[(m,n),(0,0)]} f,$
and $N_1^m N_2^n (e_{(0,0)} \otimes f)
= T_1^m T_2^n (e_{(0,0)} \otimes f).$
Hence,
\[
\| N_1^m N_2^n (e_{(0,0)} \otimes f) \|^2
= \| T_1^m T_2^n (e_{(0,0)} \otimes f) \|^2
= \| \mathcal{T}_{[(m,n),(0,0)]} f \|^2.
\]
Also,
\[
\| N_1^m N_2^n (e_{(0,0)} \otimes f) \|^2
= \int |s|^{2m} |t|^{2n} \, d\langle E(s,t)(e_{(0,0)} \otimes f), (e_{(0,0)} \otimes f) \rangle.
\]
Define $\mu_f(\cdot) := \langle E(\cdot)(e_{(0,0)} \otimes f), (e_{(0,0)} \otimes f) \rangle.$
Therefore,
\[
\| \mathcal{T}_{[(m,n),(0,0)]} f \|^2
= \int |s|^{2m} |t|^{2n} \, d\mu_f(s,t).
\]
Define a map: $\Phi :\mathbb{C}^2 \to \mathbb{R}_+^2$ as $\Phi(s,t)=(|s|^2 ,|t|^2)$. Let $u=|s|^2$, $v=|t|^2$ and define a new measure $\nu(B):=\mu_f(\Phi^{-1}(B))$, for all Borel subset $B$ of  $\mathbb{R}_+^2$

\[
\| \mathcal{T}_{[(m,n),(0,0)]} f \|^2
= \int u^mv^n \, d\nu(u,v).
\]
This imply,  $\left\{ \| \mathcal{T}_{[(m,n),(0,0)]} f \|^2 \right\}$ is a $2$-variable Stieltjes moment sequence.

Conversely, assume that for every $f\in \mathcal{H}$,  $\left\{ \| \mathcal{T}_{[(m,n),(0,0)]} f \|^2 \right\}$ is a Stieltjes moment sequence. Let $f\in \mathcal{H}$.
There exists a positive Borel measure $\mu_f$ on $\mathbb{R}_{+}^2$ such that 
\[
\| \mathcal{T}_{[(m,n),(0,0)]} f \|^2
= \int s^{m} t^{n} \, d\mu_f(s,t).
\]
Choose $X=\Pi_{0,0}\mathcal{H}$  and let $f\in \mathcal{H}$  then 
\[
\begin{aligned}
\langle B_{\mathbf{T}}((m,n),(p,q))& \Pi_{0,0} f,\; \Pi_{0,0} f \rangle
= \langle T_1^m T_2^n \Pi_{0,0} f,\; T_1^p T_2^q \Pi_{0,0} f \rangle = \delta_{(m,n),(p,q)}\| \mathcal{T}_{[(m,n),(0,0)]} f \|^2\\
&=\frac{1}{(2\pi)^2}\int_{0}^{2\pi}\int_{0}^{2\pi}\int_{\mathbb{R}_{+}^2}(s^{1/2}e^{i\theta})^m \overline{(s^{1/2}e^{i\theta})^p}(t^{1/2}e^{i\phi})^n \overline{(t^{1/2}e^{i\phi})^q}d\mu_f(s,t)d\theta d\phi.
\end{aligned}
\]
Therefore using this we can prove that  $\langle B_{\mathbf{T}}(\cdot)h,h\rangle$ is positive definite over $\mathfrak{R}_2$ for all $h\in \Pi_{0,0}\mathcal{H}$, see \cite[ Introduction]{St}.
Since $\ker({W_{m,n}^{(1)}}^*)=\ker({W_{m,n}^{(2)}}^*)=\{0\}$
for any $(m,n)\in \mathbb{Z}_{+}^2$ therefore  we  get $\ker(\mathcal{T}_{[(m,n),(0,0)]})^*=\{0\}$ and hence $\overline{ran(\mathcal{T}_{[(m,n),(0,0)]})}$ =$\mathcal{H}$ 
and $$\bigvee_{m,n\geq 0} T_1^mT_2^n \Pi_{0,0}\mathcal{H} =\{(x_{m,n})_{m,n\in \mathbb{Z}_{+}} : x_{m,n}\in \overline{Ran (\mathcal{T}_{[(m,n),(0,0)]})}\}.$$ Hence,  $\ell^2(\mathbb {Z}_{+}^2)\otimes\mathcal{H}=$  $\bigvee_{m,n\geq 0}\{T_1^mT_2^n\Pi_{0,0}h:h\in \mathcal{H}\}$. Thus the subnormality of $\mathbf{T}$ follows from \cite[Theorem 3.1]{St} applied to $X=\Pi_{0,0}\mathcal{H}$.
\end{proof}
\section{\textbf{The Cauchy dual subnormality problem via by joint kernel condition}}\label{Section on Cauchy Dual Subnormality}
In this section, we answer the Cauchy dual subnormality problem  in the affirmative for toral $2$-isometries that satisfy the joint kernel condition with commutativity of initial two weights. 
To this end, we recall some definitions and state two useful facts related to
classical moment problems. A bisequence $\gamma = \{\gamma_{m,n}\}_{n=0}^\infty \subseteq \mathbb{R}^2$
is said to be a Hamburger (resp., Stieltjes, Hausdorff) moment bisequence if there exists
a positive Borel measure $\mu$ on $\mathbb{R}^2$ (resp., $\mathbb{R}^2_+,$ $[0,1]\times [0,1]$) such that
\[
\gamma_{m,n} = \int t_1^mt_2^n \, d\mu(t_1,t_2) \quad \text{for every } m,n \in \mathbb{Z}_+.
\]
Such a $\mu$ is called a representing measure of the bisequence $\gamma$.  We refer the reader to \cite{C.B},\cite{B.S} for more information on moment problems. The following lemma describes representing measures of special rational-type Hausdorff moment bisequences see \cite[Theorem 3.1]{A.K}.
\begin{lemma}\label{Measure}
 Let $a,b,c \in \mathbb{R}$  be such that $a+bm+cn\neq 0$  for every $m,n\in \mathbb{Z}_{+}.$ 
 Then $\gamma_{a,b,c}=\frac{1}{a+bm+cn}$ is a Hamburger moment bisequence if and only if $a>0$, $b\geq 0$ and $c\geq 0$. If this is the case, then $\gamma_{a,b,c}$ is a Hausdorff moment bisequence and its unique representing measure $\mu_{a,b,c}$ is given by 
 \[
\mu_{a,b,c}(\Delta)=
\begin{cases}
\displaystyle \frac{1}{b}\, s^{\frac{a}{b}-1}\, d\delta_{\,s^{\frac{c}{b}}}(t)\, ds 
& \text{if } a>0,\; b>0,\; c \ge 0, \\[8pt]

\displaystyle \frac{1}{c}\, s^{\frac{a}{c}-1}\, d\delta_{\,s^{\frac{b}{c}}}(t)\, ds 
& \text{if } a>0,\; c>0,\; b \ge 0, \\[8pt]

\displaystyle \frac{1}{a}\, d\delta_1(s)\, d\delta_1(t)
& \text{if } a>0,\; b=0,\; c=0,
\end{cases}
\]
  where $\Delta \in \mathfrak{B}([0,1]\times[0,1]).$
\end{lemma}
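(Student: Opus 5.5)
The plan is to prove the two implications separately. Sufficiency is an explicit integral computation. Necessity and uniqueness both come from positivity of even moments together with a support argument.

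\emph{Necessity.} Suppose $\gamma_{a,b,c}$ is a Hamburger moment bisequence with representing measure $\mu$ on $\mathbb{R}^2$.
\begin{enumerate}
\item Since $\gamma_{0,0}=\mu(\mathbb{R}^2)\ge 0$ and $\gamma_{0,0}=1/a\neq 0$, we get $1/a>0$, so $a>0$.
\item For every $m$ we have $\gamma_{2m,0}=\int t_1^{2m}\,d\mu\ge 0$, and it is nonzero, so $a+2bm>0$ for all $m\in\mathbb{Z}_+$. If $b<0$ this fails for large $m$; hence $b\ge 0$.
\item The same argument with $\gamma_{0,2n}$ gives $c\ge 0$.
\end{enumerate}

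\emph{Sufficiency, case $a>0$, $b>0$, $c\ge 0$.} Take $\mu$ to be the push-forward of $\frac1b s^{a/b-1}\,ds$ on $[0,1]$ under $s\mapsto (s,s^{c/b})$. Because $c/b\ge 0$, we have $s^{c/b}\in[0,1]$, so $\mu$ is supported in $[0,1]\times[0,1]$. Then
\[
\int t_1^m t_2^n\,d\mu=\frac1b\int_0^1 s^{\frac ab-1+m+\frac{cn}{b}}\,ds=\frac{1}{b}\cdot\frac{1}{\frac ab+m+\frac{cn}b}=\frac{1}{a+bm+cn}.
\]
The exponent $\frac ab -1+m+\frac{cn}{b}$ exceeds $-1$ since $a/b>0$, so the integral converges.

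\emph{Sufficiency, remaining cases.} The case $c>0$, $b\ge0$ is symmetric: swap the roles of the variables and parametrize $t_2=s$, $t_1=s^{b/c}$. In the case $b=c=0$, the measure $\frac1a\delta_{(1,1)}$ gives constant moments $1/a$. In every case $\gamma_{a,b,c}$ is Hausdorff, hence Hamburger.

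\emph{Uniqueness.} This is the only genuinely delicate point, because determinacy must hold among all measures on $\mathbb{R}^2$, not just those on $[0,1]^2$.
\begin{enumerate}
\item Let $\nu$ be any representing measure on $\mathbb{R}^2$. Then $\int t_1^{2m}\,d\nu=1/(a+2bm)\le 1/a$ for all $m$.
\item By Chebyshev's inequality, $\nu(\{|t_1|>1+\varepsilon\})\le (1+\varepsilon)^{-2m}/a\to 0$ as $m\to\infty$. Hence $\nu$ is supported in $\{|t_1|\le 1\}$, and likewise in $\{|t_2|\le 1\}$.
\item So $\nu$ lives on the compact square $[-1,1]^2$. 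There polynomials are uniformly dense in $C([-1,1]^2)$ by Stone--Weierstrass, so $\nu$ is determined by its moments.
\item Therefore $\nu=\mu_{a,b,c}$, which gives uniqueness.
\end{enumerate}
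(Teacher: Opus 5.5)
Your proof is correct and follows the paper's route. Necessity comes from positivity of the even moments $\gamma_{2m,2n}$, and sufficiency from the explicit computation with $\int_0^1 s^{\alpha}\,ds=1/(\alpha+1)$. Your swap $t_2=s$, $t_1=s^{b/c}$ in the case $c>0$ is the correct reading of the paper's second formula, which as literally written would produce $1/(a+cm+bn)$.

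Your Chebyshev plus Stone--Weierstrass determinacy argument goes beyond the paper. The paper asserts uniqueness of the representing measure without proof, whereas you prove uniqueness among all positive measures on $\mathbb{R}^2$.
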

\begin{proof}
If $\gamma_{a,b,c}$ is a Hamburger moment bisequence, then $\gamma_{a,b,c}(2m,2n) > 0$ for all $m,n \in \mathbb{Z}_+$,
which implies that $a > 0$, $b\geq 0$ and $c \ge 0$. Conversely, if $a > 0$, $b\geq 0$ and $c \ge 0$, then applying the
well-known integral formula
\[
\int_{0}^{1} t^{\alpha}\, dt =
\begin{cases}
\dfrac{1}{\alpha+1} & \text{if } \alpha \in (-1,\infty), \\[6pt]
\infty & \text{if } \alpha \in (-\infty,-1],
\end{cases}
\]
one can easily verify that $\gamma_{a,b,c}$ is a Hausdorff moment bisequence with a representing
measure $\mu_{a,b,c}$.
\end{proof}
The following Lemma \ref{Moment} extends \cite[Lemma 3.2]{A.A} to the setting of product spaces. 
\begin{lemma}\label{Moment}
Let $(X_1, \mathcal{A}_1, \mu_1)$ and $(X_2, \mathcal{A}_2, \mu_2)$ be two measure spaces.
Let $\{\gamma_{m,n}\}_{m,n \geq 0}$ be a bisequence of measurable real-valued function on the product space $(X_1 \times X_2, \mathcal{A}_1 \otimes \mathcal{A}_2, \mu_1 \times \mu_2)$. Assume that $\{\gamma_{m,n}(x,y)\}_{m,n \geq 0}$ is a Hamburger (resp. Stieltjes, Hausdorff) moment bisequence for $\mu_1\times \mu_2$-almost every $(x,y)\in X_1\times X_2$ and
\[
\int_{X_1 \times X_2} |\gamma_{m,n}(x,y)| \, d(\mu_1 \times \mu_2)(x,y) < \infty, \quad  \forall (m,n) \in \mathbb{Z}_+^2
\]
Then $\{\int_{X_1 \times X_2} \gamma_{m,n} \, d(\mu_1 \times \mu_2)\}_{m=0,n=0}^{\infty}$ is a Hamburger (resp. Stieltjes, Hausdorff) moment bisequence.
\end{lemma}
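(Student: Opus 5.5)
The plan is to reduce to the one-variable case, as in \cite[Lemma 3.2]{A.A}, using disintegration and the product structure. Let $E=\{(x,y):\{\gamma_{m,n}(x,y)\}$ is not a Hamburger (resp. Stieltjes, Hausdorff) moment bisequence$\}$; then $E$ is $\mu_1\times\mu_2$-null. For each $(x,y)\notin E$ choose a representing measure $\nu_{x,y}$ on $\mathbb{R}^2$ (resp. $\mathbb{R}_+^2$, $[0,1]^2$). The goal is to define
\[
\nu(\Delta):=\int_{X_1\times X_2}\nu_{x,y}(\Delta)\,d(\mu_1\times\mu_2)(x,y),\quad \Delta\in\mathfrak{B}(\mathbb{R}^2),
\]
and verify that $\int t_1^mt_2^n\,d\nu=\int\gamma_{m,n}\,d(\mu_1\times\mu_2)$ by Tonelli/Fubini. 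Note $\nu_{x,y}(\mathbb{R}^2)=\gamma_{0,0}(x,y)$ is integrable, so $\nu$ is finite. Since the statement only uses the product measure $\mu_1\times\mu_2$ on $\mathcal{A}_1\otimes\mathcal{A}_2$, the product structure plays no real role; this is an instance of the general single-measure-space statement.

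The main obstacle is measurability of $(x,y)\mapsto\nu_{x,y}(\Delta)$, since representing measures need not be unique and an arbitrary choice may not be measurable. I would avoid choosing measures altogether and instead use the positivity characterizations, which is cleaner. In the Hausdorff case, $\{\gamma_{m,n}\}$ is a Hausdorff moment bisequence if and only if it is completely monotone in both variables, i.e.
\[
\sum_{i\le k,\,j\le l}(-1)^{i+j}\binom{k}{i}\binom{l}{j}\gamma_{m+i,n+j}\ge 0\quad\text{for all } m,n,k,l
\]
(the two-dimensional Hausdorff theorem). Each such inequality is linear in the $\gamma$'s, holds a.e., and is preserved by integration because every $\gamma_{m,n}$ is integrable. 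So $\{\int\gamma_{m,n}\}$ satisfies the same inequalities and is Hausdorff.

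For Hamburger and Stieltjes the analogous route fails in two variables, since positive definiteness is not sufficient there. There I would return to the measure-selection approach and handle measurability by a measurable selection argument. Consider the set $\mathcal{M}$ of finite positive Borel measures on $\mathbb{R}^2$ with the weak topology, which is Polish. The multifunction $(x,y)\mapsto\{\nu\in\mathcal{M}:\int t^\alpha d\nu=\gamma_\alpha(x,y)\ \forall\alpha\}$ has a measurable graph, since each moment map is Borel on $\mathcal{M}$ (as a monotone limit of truncations). By the von Neumann--Aumann selection theorem there is a selection $(x,y)\mapsto\nu_{x,y}$ that is measurable with respect to the $\mu_1\times\mu_2$-completion. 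This suffices for defining $\nu$ above.

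Countable additivity of $\nu$ then follows from monotone convergence. The moment identity follows by approximating $t_1^mt_2^n$ by simple functions: first for even monomials via Tonelli, then for general monomials by splitting into positive and negative parts, where domination comes from $|t^\alpha|\le t^{2\alpha'}+1$-type bounds and the integrability of $\gamma_{2m,2n}$ and $\gamma_{0,0}$. The support of $\nu$ lies in $\mathbb{R}_+^2$ or $[0,1]^2$ whenever each $\nu_{x,y}$ does, which completes all three cases.
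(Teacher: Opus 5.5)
Your argument is correct, and it is more careful than the paper's at the one point that needs care. The paper takes the route of your Hamburger/Stieltjes step: it chooses representing measures $\nu_{(x,y)}$, sets $\Lambda(B)=\int\nu_{(x,y)}(B)\,d(\mu_1\times\mu_2)$, and applies Fubini. It then simply asserts that measurability of the $\gamma_{m,n}$ makes $\Lambda$ well defined. That claim is justified when the representing measure is unique, as in the Hausdorff case: there $(x,y)\mapsto\int p\,d\nu_{(x,y)}$ is measurable for polynomials $p$, hence for continuous $f$ by Weierstrass, hence for indicators of Borel sets by a monotone class argument. For indeterminate Hamburger or Stieltjes bisequences, however, an arbitrary choice need not be measurable, which is exactly the obstacle you flag. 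Your Hausdorff argument via two-variable complete monotonicity avoids choosing measures altogether. Your von Neumann--Aumann selection repairs the general case; the usual form of that theorem needs a complete $\sigma$-finite measure, which you can arrange by working on $\{\gamma_{0,0}>0\}$ (which is $\sigma$-finite because $\gamma_{0,0}$ is integrable) and taking $\nu_{x,y}=0$ where $\gamma_{0,0}=0$. The selection theorem is, however, heavier than necessary. Positive definiteness is indeed insufficient in two variables, but Haviland's theorem gives a linear characterization for every closed $K$: a functional $L$ on $\mathbb{R}[t_1,t_2]$ is represented by a positive measure on $K$ if and only if $L(p)\ge 0$ whenever $p\ge 0$ on $K$. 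For $p=\sum_\alpha c_\alpha t^\alpha\ge 0$ on $K$, the measurable function $\sum_\alpha c_\alpha\gamma_\alpha$ is nonnegative off the exceptional null set, since it equals $\int p\,d\nu$ for any representing measure of $\gamma(x,y)$. Hence $\sum_\alpha c_\alpha\int\gamma_\alpha\,d(\mu_1\times\mu_2)\ge 0$, which treats all three cases uniformly, exactly as in your Hausdorff argument. Finally, your observation that the product structure plays no role is worth keeping. The paper actually applies the lemma with the non-product measure $\langle E(\cdot)f,f\rangle$, and only in the Hausdorff case, which your first argument covers directly.
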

\begin{proof}
Let $K \subseteq \mathbb{R}^2$ be the support associated with the type of moment bisequence:
\begin{itemize}
    \item \textbf{Hamburger:} $K = \mathbb{R}^2$
    \item \textbf{Stieltjes:} $K = [0, \infty) \times [0, \infty)$
    \item \textbf{Hausdorff:} $K = [0, 1] \times [0, 1]$
\end{itemize}
By the definition of a moment bisequence, for $(\mu_1 \times \mu_2)$-almost every $(x,y) \in X_1 \times X_2$, there exists a non-negative Borel measure $\nu_{(x,y)}$ supported on $K$ such that:
\[
\gamma_{m,n}(x,y) = \int_{K} s^m t^n \, d\nu_{(x,y)}(s,t) \quad \forall (m,n) \in \mathbb{Z}_+^2.
\]
Define a measure $\Lambda$ on the Borel $\sigma$-algebra $\mathcal{B}(K)$ by integrating the family of measures $\nu_{(x,y)}$ over the product space:
\[
\Lambda(B) = \int_{X_1 \times X_2} \nu_{(x,y)}(B) \, d(\mu_1 \times \mu_2)(x,y), \quad B \in \mathcal{B}(K).
\]
The measurability of the bisequence $\{\gamma_{m,n}(x,y)\}$ ensures that $\Lambda$ is a well-defined non-negative Borel measure on $K$. Let $S_{m,n} = \int_{X_1 \times X_2} \gamma_{m,n}(x,y) \, d(\mu_1 \times \mu_2)(x,y)$. Substituting the local representation gives:
\[
S_{m,n} = \int_{X_1 \times X_2} \left( \int_{K} s^m t^n \, d\nu_{(x,y)}(s,t) \right) d(\mu_1 \times \mu_2)(x,y).
\]
Given the absolute integrability condition $\int_{X_1 \times X_2} |\gamma_{m,n}(x,y)| \, d(\mu_1 \times \mu_2) < \infty$, we apply Fubini's Theorem to swap the order of integration:
\[
S_{m,n} = \int_{K} s^m t^n \, d\left( \int_{X_1 \times X_2} \nu_{(x,y)} \, d(\mu_1 \times \mu_2)(x,y) \right) = \int_{K} s^m t^n \, d\Lambda(s,t).
\]
Since $S_{m,n}$ is represented as the $(m,n)$-th moment of the non-negative Borel measure $\Lambda$ on $K$, the sequence $\{S_{m,n}\}$ is a Hamburger (resp., Stieltjes, Hausdorff) moment bisequence.
\end{proof}
Before proving Theorem \ref{Main theorem}, we establish an integral representation for toral $2$-isometric operator-valued weighted shifts with in the class of $\ell^2_{IWP}(\mathcal{H})$. The following proposition shows that when the initial weights commute, every weight in the shift admits an explicit representation in terms of the joint spectral measure of the initial weights. This result will play a crucial role in the proof of the main theorem.
\begin{proposition}\label{Integral representation}
  Let $(T_1,T_2)$ be a toral $2$-isometric weighted shifts in the class of $\ell^2_{IWP}(\mathcal{H})$ with weights $\{W_{I}^{(j)}:j=1,2\}_{I\in\mathbb Z_+^2}$ such that  the initial weights $W_{0,0}^{(1)}$  and $W_{0,0}^{(2)}$ commute then
\begin{eqnarray}\label{Integral formula for horizontal weights}
    W_{m,n}^{(1)}=\int_{[1,a_1]\times [1,a_2]} \sqrt{\frac{1+(m+1)(x^2-1)+n(y^2-1)}{1+m(x^2-1)+n(y^2-1)}}dE(x,y)
\end{eqnarray}
and 
\begin{eqnarray}\label{Integral formula for vertical weights}
    W_{m,n}^{(2)}=\int_{[1,a_1]\times [1,a_2]} \sqrt{\frac{1+m(x^2-1)+(n+1)(y^2-1)}{1+m(x^2-1)+n(y^2-1)}}dE(x,y),
\end{eqnarray}
where $E$ is a joint spectral measure on $[1,a_1]\times [1,a_2]$ with $a_1=\|W_{0,0}^{(1)}\|$ and $a_2=\|W_{0,0}^{(2)}\|.$
\end{proposition}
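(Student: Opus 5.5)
The plan is to combine the explicit weight formulas of Theorem~\ref{Representation of weights} with the continuous functional calculus for the commuting pair of positive operators $X:=W_{0,0}^{(1)}$, $Y:=W_{0,0}^{(2)}$.

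\textbf{Step 1: the spectral measure.} Since $X,Y$ are commuting positive operators, they are commuting self-adjoint operators. The spectral theorem therefore gives a joint spectral measure $E$ on $\sigma(X)\times\sigma(Y)$ with
\[
X=\int x\,dE(x,y), \qquad Y=\int y\,dE(x,y).
\]
To place the support of $E$ inside $[1,a_1]\times[1,a_2]$, I need $X\geq I$ and $Y\geq I$. For $X$, use \eqref{Formula for P-mn} from the proof of Theorem~\ref{Representation of weights}: it gives
\[
\mathcal{P}_{m,0}^2=mA+I\geq 0 \quad\text{for all } m\in\mathbb{Z}_+ .
\]
Dividing by $m$ and letting $m\to\infty$ gives $A\geq 0$, that is, $X^2\geq I$. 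Since $X$ is positive, this yields $X\geq I$. The same argument gives $Y\geq I$. Finally $\|X\|=a_1$ and $\|Y\|=a_2$, so $\sigma(X)\subseteq[1,a_1]$ and $\sigma(Y)\subseteq[1,a_2]$.

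\textbf{Step 2: functional calculus.} On the support of $E$ put
\[
g_{m,n}(x,y)=1+m(x^2-1)+n(y^2-1).
\]
Each $g_{m,n}$ is continuous and satisfies $g_{m,n}\geq 1$ there. The map $\phi\mapsto\int\phi\,dE$ is a $*$-homomorphism from bounded Borel functions into $B(\mathcal{H})$. Applied to the polynomial $g_{m,n}$ it gives
\[
\int g_{m,n}\,dE=mA+nB+I.
\]
The positive square root is unique, and $\int\sqrt{g_{m,n}}\,dE$ is a positive operator whose square equals this operator. Hence
\[
\sqrt{mA+nB+I}=\int\sqrt{g_{m,n}}\,dE .
\]
Since $g_{m,n}\geq 1$, the function $1/\sqrt{g_{m,n}}$ is bounded, and multiplicativity of the calculus gives
\[
\Bigl(\sqrt{mA+nB+I}\Bigr)^{-1}=\int g_{m,n}^{-1/2}\,dE .
\]

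\textbf{Step 3: conclusion.} Substitute Step 2 into \eqref{Formula for horizontal weights} and \eqref{Formula for vertical weights}. By multiplicativity once more, $W_{m,n}^{(1)}$ is the integral of $\sqrt{g_{m+1,n}/g_{m,n}}$ against $E$, which is \eqref{Integral formula for horizontal weights}. Likewise $W_{m,n}^{(2)}$ is the integral of $\sqrt{g_{m,n+1}/g_{m,n}}$, which is \eqref{Integral formula for vertical weights}.

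\textbf{Main obstacle.} The point needing the most care is Step 1, establishing $X,Y\geq I$ so that the measure lives on $[1,a_1]\times[1,a_2]$ rather than on a larger rectangle. The rest is routine bookkeeping in the functional calculus. If the limiting argument from \eqref{Formula for P-mn} were unavailable, I would instead use the $2$-isometry inequality $\|T_1f\|\geq\|f\|$ from \cite[Lemma 1]{S.R}, applied to $f=e_{0,0}\otimes h$. This gives $\|Xh\|\geq\|h\|$, hence $X^2\geq I$, and then $X\geq I$ by positivity.
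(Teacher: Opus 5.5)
Your proposal is correct and follows essentially the same route as the paper: the joint spectral theorem for the commuting positive initial weights, then functional calculus (multiplicativity, uniqueness of positive square roots, inversion), then substitution into the formulas of Theorem~\ref{Representation of weights}. The one addition is that you justify $W_{0,0}^{(j)}\geq I$, either via $mA+I=\mathcal{P}_{m,0}^2\geq 0$ for all $m$ or via Richter's inequality; the paper simply asserts this, and it is what places the support of $E$ in $[1,a_1]\times[1,a_2]$.
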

\begin{proof}
 Since $W_{0,0}^{(1)}\geq I,$ $W_{0,0}^{(2)}\geq I,$ and $W_{0,0}^{(1)}$ and $W_{0,0}^{(2)}$ commute, by the joint spectral theorem for commuting bounded self-adjoint operators, there exists a joint spectral measure $E$ on $[1,a_1]\times [1,a_2]$ such that
 \[
 W_{0,0}^{(1)}=\int_{[1,a_1]\times [1,a_2]} x\, dE(x,y)
 \]
 and
 \[
 W_{0,0}^{(2)}=\int_{[1,a_1]\times [1,a_2]} y\, dE(x,y),
 \]
 Since $W_{0,0}^{(1)}$ and $W_{0,0}^{(2)}$ commute, the operators
 $(W_{0,0}^{(1)})^2$ and $(W_{0,0}^{(2)})^2$ also commute. Hence, by the functional calculus for commuting self-adjoint operators,
 \[
 (W_{0,0}^{(1)})^2
 =
 \int_{[1,a_1]\times [1,a_2]} x^2\, dE(x,y)
 \]
 and
 \[
 (W_{0,0}^{(2)})^2
 =
 \int_{[1,a_1]\times [1,a_2]} y^2\, dE(x,y).
 \]
 Therefore,\[
 I+(m+1)((W_{0,0}^{(1)})^2-I)+n((W_{0,0}^{(2)})^2-I)
 =
 \int_{[1,a_1]\times [1,a_2]}
 \Big(1+(m+1)(x^2-1)+n(y^2-1)\Big)\, dE(x,y),
 \]
 and 
 \[
 I+m((W_{0,0}^{(1)})^2-I)+n((W_{0,0}^{(2)})^2-I)
 =
 \int_{[1,a_1]\times [1,a_2]}
 \Big(1+m(x^2-1)+n(y^2-1)\Big)\, dE(x,y).
 \]
 Similarly we obtain that,\[
 I+m((W_{0,0}^{(1)})^2-I)+(n+1)((W_{0,0}^{(2)})^2-I)
 =
 \int_{[1,a_1]\times [1,a_2]}
 \Big(1+m(x^2-1)+(n+1)(y^2-1)\Big)\, dE(x,y).
 \]
 Using the multiplicativity property of the spectral integral, commutativity of spectral projection (cf.\cite[Preliminaries]{B.W}) and the inversion formula for bounded Borel functions and Theorem $\ref{Representation of weights}$, we have the representations stated in \eqref{Integral formula for horizontal weights} and \eqref{Integral formula for vertical weights}.
 Hence the proof is complete.
\end{proof}
\begin{remark}\label{Cauchy dual weights}
Let $\mathbf{T} = (T_1, T_2)$ be a $2$-variable operator-valued weighted shift with invertible weights 
$\{W_{I}^{(j)}:j=1,2\}_{I\in \mathbb{Z}_{+}^2}$. For all $(m,n)\in \mathbb{Z}_{+}^2$ and $h\in \mathcal{H}$
the operators are defined by
\begin{align*}
T_1 (e_{m,n} \otimes h) &= e_{m+1,n} \otimes W^{(1)}_{m,n} h, \\ 
T_2 (e_{m,n} \otimes h) &= e_{m,n+1} \otimes W^{(2)}_{m,n} h.
\end{align*}
The adjoint satisfies
\begin{align*}
T_1^* (e_{m,n} \otimes h)
=
\begin{cases}
e_{m-1,n} \otimes W^{(1)*}_{m-1,n} h, & m \ge 1, \\
0, & m = 0,
\end{cases}
\end{align*}
and hence $T_1^* T_1 (e_{m,n} \otimes h)
= T_1^* \big( e_{m+1,n} \otimes W^{(1)}_{m,n} h \big) = e_{m,n} \otimes W^{(1)*}_{m,n} W^{(1)}_{m,n} h.$
A simple computation shows that $T_1' (e_{m,n} \otimes h)= e_{m+1,n} \otimes (W^{(1)*}_{m,n})^{-1} h$ and $T_2' (e_{m,n} \otimes h)=e_{m,n+1} \otimes (W^{(2)*}_{m,n})^{-1} h$ for all $(m,n) \in \mathbb{Z}_+^2$ and $h\in\mathcal H.$ As $(T_1,T_2)$ is commuting, therefore using Remark \ref{Cauchy dual weights}, the toral Cauchy dual of $\mathbf{T}$ is also commuting.
\end{remark}
Using \eqref{Moment weights product}, \eqref{Integral formula for horizontal weights} and \eqref{Integral formula for vertical weights}, we have
\begin{eqnarray}\label{Integral representation of moments product}
\mathbf{W}_{[(m,n), (i,j)]} = \mathcal{T}_{[(m,n), (i,j)]} = \int_{[1,a_1]\times [1,a_2]} \sqrt{\frac{1 + m(x^2 - 1) + n(y^2 - 1)}{1 + i(x^2 - 1) + j(y^2 - 1)}} E(dx, dy)
\end{eqnarray}
 Again using \eqref{Toral $k$-isometry}  and \eqref{Integral representation of moments product},  we have that 
\[
\sum_{\alpha \in \mathbb{Z}_+^2}
(-1)^{|\alpha|}
\binom{\beta}{\alpha}
\left\|
\mathbf{W}_{[{(s+\alpha_1,\; t+\alpha_2),(s,t)}]}f
\right\|^2 =0
\]
where $|\beta| = 2$, for all $s \ge 0$, $t \ge 0$ and for all $f\in \mathcal{H}$, we verified that $\mathbf{W}=\mathbf{T}$ is a toral $2$-isometry.

Now we are ready to prove Theorem \ref{Main theorem}.

\medskip
\begin{proof}[\textbf{Proof of Theorem \ref{Main theorem}:}]
Applying Proposition \ref{Cauchy dual}, we get 
$T_i'^* T_i' \big( \ker \mathbf{T}'^* \big) \subseteq \ker \mathbf{T}'^*$,  $\text{for } i = 1,2$. By Proposition \ref{Integral representation} , it suffices to consider $\mathbf{T}=\mathbf{W}=(T_1,T_2)$ is an operator-valued $2$-shifts on $\ell^2_{\mathbb{Z}_{+}^2}(\ker \mathbf{T}^*)$ with weights $\{W_{I}^{(j)}:j=1,2\}_{I\in \mathbb{Z}_{+}^2}$ which are given by in Proposition \ref{Integral representation}.
Since weights of $\mathbf{T}$ are invertible, selfadjoint and commuting, we infer from the Remark \ref{Cauchy dual weights} that $\mathbf{T'}$ is an operator-valued $2$-shifts on $\ell^2_{\mathbb{Z}_{+}^2}(\ker \mathbf{T}^*)$ with weights $\{(W_{I}^{(j)})^{-1}:j=1,2\}_{I\in \mathbb{Z}_{+}^2}$. Thus, by the commutativity of weights and the inversion formula for spectral integral, we have $(\mathbf{W}')_{[(m,n), (0,0)]} = \left( (\mathbf{W})_{[(m,n), (0,0)]} \right)^{-1}$
\[
\overset{\eqref{Integral representation of moments product}}{=} \int_{[1,a_1]\times [1,a_2]} \frac{1}{\sqrt{1 + m(x^2 - 1) + n(y^2 - 1)}} E(dx, dy)
\]
This implies that \[
\| (\mathbf{W'})_{[(m,n), (0,0)]} f \|^2 = \int_{[1,a_1]\times [1,a_2]} \frac{1}{1 + m(x^2 - 1) + n(y^2 - 1)} \langle E(dx, dy)f, f \rangle.
\]
Using Lemmas \ref{Measure} and \ref{Moment} we deduced that $\| (\mathbf{W}')_{[(m,n), (0,0)]} f \|^2$ is a Stieltjes moment bisequence for every $f$. Hence by Theorem \ref{Subnormality criteria} $(T_1',T_2')$ is jointly subnormal operators which, by \eqref{Cauchy dual contraction}, are contraction.
This completes the proof. 
\end{proof}
\begin{corollary}\label{Seperate kernel condition}
Let $\mathbf{T}=(T_1,T_2)$ be pair of left-inverse commuting  toral $2$-isometry on $B(\mathcal{H})$ satisfying $T_i^*T_i(\ker T_i^*)\subseteq \ker T_i^*$ for $i=1,2.$ If $|T_1||T_2|=|T_2||T_1|$ on $\ker\mathbf{T}^*$ then $\mathbf{T'}=(T_1',T_2')$ is jointly subnormal contractions.
\end{corollary}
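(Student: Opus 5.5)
The plan is to reduce the statement to four pieces using the joint Wold-type decomposition, Theorem~\ref{Block representation}. This gives $\mathcal{H}\cong H_{00}\oplus H_{01}\oplus H_{10}\oplus H_{11}$, where each summand reduces both $T_1$ and $T_2$. Because each piece is reducing, $T_i^*T_i$ is block diagonal, and hence so is $T_i'=T_i(T_i^*T_i)^{-1}$. Thus $\mathbf{T}'$ is the orthogonal direct sum of the toral Cauchy duals of the four restricted pairs. A direct sum of jointly subnormal commuting pairs is jointly subnormal: one takes the direct sum of the normal extensions. Contractivity of each $T_i'$ follows from \eqref{Cauchy dual contraction}, since each $T_i$ is a $2$-isometry. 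It therefore suffices to prove joint subnormality of the Cauchy dual on each piece.

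On $H_{00}$ both operators are unitary. The Cauchy dual of a unitary is the unitary itself, so $(U_0,V_0)$ is a commuting pair of unitaries, hence jointly normal. On $H_{01}$ the pair is $(U_1,M_z)$, with $U_1$ unitary commuting with $M_z$. By Lemma~\ref{Commuting}, the dual pair $(U_1,M_z')$ is commuting. I would then use the separate kernel condition $T_2^*T_2(\ker T_2^*)\subseteq\ker T_2^*$, restricted to the reducing subspace $H_{01}$, together with \cite[Theorem 3.3]{A.A}, to get that $M_z'$ is a subnormal contraction. The remaining task is to produce a joint normal extension with $U_1$. Since $U_1$ commutes with $M_z$ and with $(M_z^*M_z)^{-1}$, it is a unitary in the commutant of $M_z'$. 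I would take the minimal normal extension $N$ of $M_z'$ and extend $U_1$ to a unitary commuting with $N$, for example via Bram's commutant lifting for minimal normal extensions. Alternatively, I would apply Theorem~\ref{Subnormality via by Stieltjes moment} directly: $U_1^{*j}M_z'^{*k}M_z'^kU_1^{j}=M_z'^{*k}M_z'^k$, which is a moment sequence in $k$ and constant in $j$, so tensoring with $\delta_1$ gives the required positive operator-valued measure. The piece $H_{10}$ is handled symmetrically.

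On $H_{11}$, the restriction $(A_1,A_2)$ is left-inverse commuting and a toral $2$-isometry. It is also analytic, since $H_{11}=\bigvee T_1^mT_2^n(\ker\mathbf{T}^*)$ is the wandering part. Its joint kernel is $\ker\mathbf{T}^*$. The key step is to deduce the joint kernel condition for $(A_1,A_2)$ from the separate kernel conditions. Take $f\in\ker T_1^*\cap\ker T_2^*$. The separate condition gives $T_1^*T_1f\in\ker T_1^*$. Left-inverse commutativity together with \cite[Corollary 4.2]{MJ} says that $\ker T_2^*$ reduces $T_1$, so $T_1^*T_1$ maps $\ker T_2^*$ into itself. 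Hence $T_1^*T_1f\in\ker T_2^*$ as well, and $T_1^*T_1(\ker\mathbf{T}^*)\subseteq\ker\mathbf{T}^*$. The same argument works for $T_2$.

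With the joint kernel condition and the hypothesis $|T_1||T_2|=|T_2||T_1|$ on $\ker\mathbf{T}^*$ in hand, Theorem~\ref{Main theorem} applies to $(A_1,A_2)$ and gives joint subnormality of its Cauchy dual. I expect the main obstacle to be the $H_{01}$ and $H_{10}$ pieces, where joint subnormality of (unitary, subnormal) requires care; the moment-measure route via Theorem~\ref{Subnormality via by Stieltjes moment} with product measure $\delta_1\otimes\rho$ is what I would try first.
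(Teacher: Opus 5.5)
Your proposal is correct and follows the paper's own route. The paper likewise uses the joint Wold-type decomposition, then \cite[Theorem 3.3]{A.A} together with the commuting unitary on $H_{01}$ and $H_{10}$, and finally Theorem~\ref{Main theorem} on $H_{11}$ after deducing the joint kernel condition from the separate ones. You also supply details the paper only asserts or cites, and they check out: the trivial $H_{00}$ summand, the use of \cite[Corollary 4.2]{MJ} to see that $\ker T_2^*$ reduces $T_1$, and a Bram-lifting or Lubin-type argument with $\delta_1\otimes\rho_0$ in place of the citation \cite[Proposition 2]{AV}.
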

\begin{proof}
Using Theorem \ref{Wold decomposition} for $(T_1, T_2)$, we obtain a Wold-type decomposition. Therefore, by the given  kernel conditions, the operators $M_z'$ on $\mathcal{D}_{\mathcal{E}_{01}}(\mu_1)$ and $M_z'$ on $\mathcal{D}_{\mathcal{E}_{10}}(\mu_2)$ are subnormal contractions (see \cite[Theorem 3.3]{A.A}). 
Hence, by \cite[Proposition 2]{AV} and  Lemma \ref{Commuting}, the pairs $(U_1', M_z')$ and $(M_z', V_1')$ are jointly subnormal. Moreover, using the separate kernel conditions $T_i^*T_i(\ker T_i^*)\subseteq \ker T_i^*$ for $i=1,2,$ we can deduce that \[
T_i^* T_i (\ker \mathbf{T}^*) \subseteq ( \ker \mathbf{T}^*), \quad \text{for } i=1,2 .
\] 
Therefore, by Theorem \ref{Main theorem} , it follows that $(M_{z_1}', M_{z_2}')$ is a jointly subnormal contraction on $\mathcal{D}_{\ker \mathbf{T}^*}(\nu_1, \nu_2)$. 
Consequently, the tuple $(T_1', T_2')$ is a jointly subnormal contraction. This completes the proof.
\end{proof}
 In what follows, we are creating an example of toral $2$-isometric weighted shift where initial two weights are positive and non commuting but toral Cauchy dual is not subnormal.
\begin{theorem}\cite[Theorem 4.1]{Athavale1987}\label{Example2}
Let $(S_1, S_2)$ be a commuting $2$-tuple of operators on $B(\mathcal{H})$.
Then the following are equivalent:
\begin{enumerate}
\item $(S_1, S_2)$ is a jointly subnormal contraction.
\item for all non-negative integers $k_1, k_2$,
\[
\sum_{\substack{0 \le p_i \le k_i \\ i=1,2}} 
(-1)^{p_1 + p_2}
\binom{k_1}{p_1}
\binom{k_2}{p_2}
S_1^{*p_1} S_2^{*p_2} S_1^{p_1} S_2^{p_2}
\;\ge\; 0.
\]
\end{enumerate}
\end{theorem}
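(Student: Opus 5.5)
The plan is to prove the two implications separately. For $(1)\Rightarrow(2)$ I would use the joint spectral measure of a normal extension. For $(2)\Rightarrow(1)$ I would turn the positivity conditions into a two-variable Hausdorff moment problem and then invoke Theorem~\ref{Subnormality via by Stieltjes moment}.

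\emph{$(1)\Rightarrow(2)$.} Let $(N_1,N_2)$ on $\mathcal{K}\supseteq\mathcal{H}$ be a commuting normal extension, chosen to be the minimal one. Minimality gives $\|N_i\|=\|S_i\|\le 1$, so the joint spectral measure $E$ of $(N_1,N_2)$ is supported in $\overline{\mathbb{D}}^2$. Let $P$ be the orthogonal projection of $\mathcal{K}$ onto $\mathcal{H}$. Since $\mathcal{H}$ is invariant under both $N_i$, we have
\[
S_1^{*p_1}S_2^{*p_2}S_1^{p_1}S_2^{p_2}=P N_2^{*p_2}N_1^{*p_1}N_1^{p_1}N_2^{p_2}|_{\mathcal{H}}=P\int |z_1|^{2p_1}|z_2|^{2p_2}\,dE\,\Big|_{\mathcal{H}}.
\]
Summing with the binomial signs, the expression in (2) becomes $P\int (1-|z_1|^2)^{k_1}(1-|z_2|^2)^{k_2}\,dE\,|_{\mathcal{H}}$. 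This is $\ge 0$ because the integrand is nonnegative on $\overline{\mathbb{D}}^2$.

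\emph{$(2)\Rightarrow(1)$.} Write $\mathbf{S}^{p}=S_1^{p_1}S_2^{p_2}$ and let $B_k$ denote the sum in (2).
\begin{enumerate}
\item Taking $k=(1,0)$ and $k=(0,1)$ gives $I-S_i^*S_i\ge 0$, so each $S_i$ is a contraction.
\item Commutativity gives $\mathbf{S}^{*q}\mathbf{S}^{*p}\mathbf{S}^{p}\mathbf{S}^{q}=\mathbf{S}^{*(p+q)}\mathbf{S}^{p+q}$. Hence, for $h\in\mathcal{H}$, the bisequence $a_h(p):=\langle \mathbf{S}^{*p}\mathbf{S}^p h,h\rangle$ satisfies
\[
\big((-\nabla_1)^{k_1}(-\nabla_2)^{k_2}a_h\big)(q)=\langle B_k\,\mathbf{S}^q h,\mathbf{S}^q h\rangle\ge 0,
\]
where $\nabla_i$ is the forward difference in the $i$-th index. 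So $a_h$ is completely monotone in two variables at every base point $q$, not just at $q=0$.
\item By the two-variable Hausdorff theorem (see \cite{C.B}), a completely monotone bisequence is a Hausdorff moment bisequence. This gives a positive measure $\mu_h$ on $[0,1]^2$ with $a_h(p)=\int t_1^{p_1}t_2^{p_2}\,d\mu_h$. The measure is unique, since Hausdorff moments are determinate, and $\mu_h([0,1]^2)=\|h\|^2$.
\item Define $\mu_{h,g}$ by polarization. Uniqueness of the representing measure makes $\mu_{h,g}$ sesquilinear in $(h,g)$ and bounded by $\|h\|\|g\|$. By the Riesz representation theorem this yields a positive operator-valued measure $\rho$ on $[0,1]^2$ with $\mathbf{S}^{*p}\mathbf{S}^p=\int t^{p}\,d\rho(t)$.
\item Push $\rho$ forward under $t\mapsto\sqrt{t}$ to get $\mathbf{S}^{*J}\mathbf{S}^J=\int t^{2J}\,d\tilde\rho$. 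Theorem~\ref{Subnormality via by Stieltjes moment} then gives a commuting normal extension, and step 1 gives contractivity.
\end{enumerate}

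The main obstacle is step 3 together with the passage to the operator measure in step 4. I would rely on the classical Hausdorff characterization for bisequences. The point to check carefully is that determinacy yields a genuinely additive sesquilinear family of measures, so that $\rho$ is countably additive in the weak operator topology.
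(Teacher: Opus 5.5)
Your proof is correct. It necessarily takes a different route from the paper, because the paper gives no argument for this statement: it simply quotes it from \cite{Athavale1987}, where it is obtained within a theory of holomorphic kernels for commuting tuples.

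Your argument is the two-variable form of the moment-theoretic proof of Agler's hypercontraction theorem. It runs as follows.
\begin{enumerate}
\item Apply the inequality in (2) to the vectors $\mathbf{S}^q h$. This shows that $p\mapsto\|\mathbf{S}^p h\|^2$ is completely monotone at every base point, not just at $q=0$.
\item The two-dimensional Hausdorff moment theorem (Hildebrandt--Schoenberg) turns this into a measure on $[0,1]^2$.
\item Determinacy on the compact square lets you glue these measures into a positive operator-valued measure.
\item Lubin's criterion (Theorem~\ref{Subnormality via by Stieltjes moment}), a several-variable form of Embry's characterization, then gives the normal extension.
\end{enumerate}

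The step you single out is unproblematic. A complex measure on $[0,1]^2$ is determined by its moments, so $\mu_{h,g}$ is sesquilinear with $\mu_{h,h}=\mu_h\ge 0$. Hence, for each Borel set $\Delta$, the form $(h,g)\mapsto\mu_{h,g}(\Delta)$ is positive and bounded by $\|h\|\,\|g\|$. Weak countable additivity of $\rho$ is then inherited directly from the $\mu_{h,g}$.

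Your route uses only tools the paper already relies on, namely Hausdorff moment bisequences and Theorem~\ref{Subnormality via by Stieltjes moment}. It also extends verbatim to $d$-tuples. The cost is that the genuinely operator-theoretic content is delegated to Lubin's theorem rather than proved.

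A small simplification is available in $(1)\Rightarrow(2)$. You need neither minimality nor the equality $\|N_i\|=\|S_i\|$; that equality is true, but in several variables it needs its own argument. For any normal extension with spectral measure $E$ and any $h\in\mathcal{H}$, you have $\int|z_1|^{2j}\,d\langle E(\cdot)h,h\rangle=\|S_1^{j}h\|^2\le\|h\|^2$ for all $j$. This forces $E(\{|z_1|>1\})h=0$, and likewise for $z_2$. That is all your positivity argument with the integrand $(1-|z_1|^2)^{k_1}(1-|z_2|^2)^{k_2}$ actually uses.
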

In our setting, $(T_1, T_2)$ is a toral $2$-isometric weighted shift with weights (See Theorem \ref{Representation of weights})
\[
W^{(1)}_{m,n}
=
\big[(m+1)A + nB + I\big]^{1/2}
\big[mA + nB + I\big]^{-1/2},
\]
\[
W^{(2)}_{m,n}
=
\big[mA + (n+1)B + I\big]^{1/2}
\big[mA + nB + I\big]^{-1/2}. 
\]
The toral Cauchy dual $(T_1', T_2')$ is contractive and has weights $\{(W_{I}^{(j)\,*})^{-1}: j=1,2\}_{I\in \mathbb{Z}_{+}^2}.$
Suppose that $(T_1', T_2')$ is jointly subnormal. Taking $k_1 = k_2 = 1$ in Theorem \ref{Example2}, we obtain
\begin{eqnarray}\label{Positivity}
I - T_1'^* T_1' - T_2'^* T_2' + T_1'^* T_2'^* T_1' T_2' \geq 0,
\end{eqnarray}
We can rewrite \eqref{Positivity} as follows
\begin{eqnarray}\label{Norm positivity}
\|x\|^2 - \|T_1' x\|^2 - \|T_2' x\|^2 + \|T_1' T_2' x\|^2 \geq 0, \quad x \in \ell_{\mathbb{Z}_{+}^2}^2(\mathcal{H}).
\end{eqnarray}
Plugging $x = e_{0,0} \otimes f$ in \eqref{Norm positivity}, where $f \in \mathcal{H}$.
Then we get,
\[\|f\|^2
- \|(W_{0,0}^{(1)})^{*-1} f\|^2
- \|(W_{0,0}^{(2)})^{*-1} f\|^2
+ \|(W_{0,1}^{(1)})^{*-1} (W_{0,0}^{(2)})^{*-1} f\|^2
\geq 0.
\]
Since the  initial two weights are positive, the above inequality reduces to
\[
\|f\|^2
- \|(W_{0,0}^{(1)})^{-1} f\|^2
- \|(W_{0,0}^{(2)})^{-1} f\|^2
+ \|(W_{0,1}^{(1)})^{*-1} (W_{0,0}^{(2)})^{-1} f\|^2
\geq 0.
\]
Using Theorem \ref{Representation of weights} for $m=0,n=1,$ we have $(W_{0,1}^{(1)})^{*-1}
=
\big(W_{0,0}^{(1)2} + W_{0,0}^{(2)2} - I\big)^{-1/2}
\, W_{0,0}^{(2)}.$ Hence we obtain
\[
\|f\|^2
- \|(W_{0,0}^{(1)})^{-1} f\|^2
- \|(W_{0,0}^{(2)})^{-1} f\|^2
+ \big\|\big(W_{0,0}^{(1)2} + W_{0,0}^{(2)2} - I\big)^{-1/2} f\big\|^2
\geq 0.
\]
Equivalently, for all $f \in \mathcal{H}$,
\begin{eqnarray}\label{Final positivity}
\langle f,f\rangle
- \big\langle (W_{0,0}^{(1)})^{-2} f, f \big\rangle
- \big\langle (W_{0,0}^{(2)})^{-2} f, f \big\rangle 
+ \big\langle \big((W_{0,0}^{(1)})^2
+ (W_{0,0}^{(2)})^2 - I\big)^{-1} f, f \big\rangle
\geq 0.
\end{eqnarray}
We now show that \eqref{Final positivity} fails for a specific choice of weights.
 Let us take
\[
f = \begin{pmatrix} 1 \\ 0 \end{pmatrix}, \quad
W_{0,0}^{(1)} = \begin{pmatrix} 1 & 0 \\ 0 & 2 \end{pmatrix}, \quad
W_{0,0}^{(2)} = \begin{pmatrix} 2 & 1 \\ 1 & 2 \end{pmatrix}.
\]
Note that $W_{0,0}^{(1)}$ and $W_{0,0}^{(2)}$ are positive definite and do not commute and using \eqref{Final positivity}, we have a direct computation yields
\[
1 - 1 - \frac{5}{9} + \frac{8}{24}
= -\frac{2}{9} < 0.
\]
Therefore $(T_1', T_2')$ is not jointly subnormal.

\noindent\textbf{Acknowledgment:} I would like to sincerely thank my supervisor Dr. Rajeev Gupta for his invaluable guidance and constant support during the preparation of this article and also sincerely thanks Prof. Sameer Chavan for his several valuable inputs and insightful comments, which greatly helped in the preparation of this article.

\bibliographystyle{amsplain}

\begin{thebibliography}{99}

\bibitem{A.J}
J. Agler,
A disconjugacy theorem for Toeplitz operators,
Amer. J. Math. 112 (1990), 1--14.

\bibitem{A.K}
A. Anand, S. Chavan, and R. Nailwal,
Joint complete monotonicity of rational functions in two variables and toral m-isometric pairs,
J. Operator Theory (2024), 101--130.

\bibitem{A.A}
A. Anand, S. Chavan, Z. J. Jabłoński, and J. Stochel,
A solution to the Cauchy dual subnormality problem for 2-isometries,
J. Funct. Anal. 277 (2019), 108292, 51 pp.

\bibitem{Athavale1987}
A. Athavale,
Holomorphic kernels and commuting operators,
Trans. Amer. Math. Soc. 304 (1987), 101--110.
\bibitem{AV}
A. Athavale and S. Pedersen,
Moment problems and subnormality,
J. Math. Anal. Appl. 146 (1990), 434--441.

\bibitem{B.S}
B. Simon,
The classical moment problem as a self-adjoint finite difference operator,
Adv. Math. 137 (1998), 82--203.

\bibitem{B.W}
B. Wróbel,
Joint spectral multipliers for mixed systems of operators,
J. Fourier Anal. Appl. 23 (2017), no. 2, 245--287.
\bibitem{C.B}C. Berg, J. P. R. Christensen, and P. Ressel,
Harmonic Analysis on Semigroups,
Springer-Verlag, Berlin, 1984.
\bibitem{HA}
P. R. Halmos,
A Hilbert Space Problem Book,
Springer-Verlag, New York, 1982.
\bibitem{J.Z}
Z. Jabłoński,
Hyperexpansive operator-valued unilateral weighted shifts,
Glasg. Math. J. 46 (2004), 405--416.
\bibitem{LU}
A. Lubin,
Weighted shifts and commuting normal extension,
J. Austral. Math. Soc. Ser. A 27 (1979), no. 1, 17--26.
\bibitem{MJ}
M. Bhattacharjee, R. Gupta, and V. Venugopal,
Dirichlet type spaces in the unit bidisc and Wandering Subspace Property for operator tuples,
\texttt{https://arxiv.org/abs/2406.16541v2}.

\bibitem{Mul}
M. Bhattacharjee, R. Gupta, and V. Venugopal,
Multivariable Wold-Type Decomposition and Analytic Models for a Class of Left-Inverse Commuting Pairs,
arXiv:2511.20632v1 [math.FA], 25 Nov 2025.
\bibitem{R.E}R. E. Curto, M. Putinar, Existence of non-subnormal polynomially hyponormal operators,
Bull. Amer. Math. Soc. 25 (1991), 373-378.
\bibitem{R.P}R. E. Curto, M. Putinar, Nearly subnormal operators and moment problems, J. Funct. Anal.
115 (1993), 480-497.
\bibitem{R.J}
R. Gupta and G. Misra,
The Carathéodory--Fejér interpolation on the polydisc,
Studia Math. 254 (2020), no. 3, 265--294.

\bibitem{R.G}
R. Gupta, S. Kumar, and S. Trivedi,
Unitary equivalence of operator-valued multishifts,
J. Math. Anal. Appl. 487 (2020), 23 pp.

\bibitem{R.S}
R. Gupta, S. Kumar, and S. Trivedi,
Von Neumann's inequality for commuting operator-valued multishifts,
Proc. Amer. Math. Soc. 147 (2019), 2599--2608.

\bibitem{S.B}
S. Bera, S. Chavan, and S. Ghara,
Dirichlet-type spaces of the unit bidisc and toral 2-isometries,
Canad. J. Math. (2024), 1--23.
\bibitem{S.C}
S. Chavan,
On operators Cauchy dual to 2-hyperexpansive operators,
Proc. Edinb. Math. Soc. 50 (2007), 637--652.

\bibitem{S.R}
S. Richter,
Invariant subspaces of the Dirichlet shift,
J. Reine Angew. Math. 386 (1988), 205--220.

\bibitem{S.S}
S. Shimorin,
Wold-type decompositions and wandering subspaces for operators close to isometries,
J. Reine Angew. Math. 531 (2001), 147--189.
\bibitem{St}
J. Stochel,
Characterizations of subnormal operators,
Studia Math. 97 (1991), 227--238.
\bibitem{V.A}
A. Athavale and V. M. Sholapurkar,
Completely hyperexpansive operator tuples,
Positivity 3 (1999), 245--257.

\bibitem{Z.J}
Z. Jabłoński,
Complete hyperexpansivity, subnormality and inverted boundedness conditions,
Integral Equations Operator Theory 44 (2002), 316--336.
\end{thebibliography}

\end{document}